\title{The Jones polynomial of ribbon links}
\author{Michael Eisermann}
\address{Institut Fourier, Universit\'e Grenoble I, France}
\email{Michael.Eisermann@ujf-grenoble.fr}
\urladdr{http://www-fourier.ujf-grenoble.fr/~eiserm}
\date{first version August 2007; this version compiled \today}
\theoremstyle{plain}
\newtheorem{Lemma}{Lemma}
\newtheorem{Proposition}{Proposition}
\newtheorem{Corollary}[Lemma]{Corollary}
\newtheorem{Theorem}{Theorem}
\newtheorem{theorem}{Theorem}[section]
\newtheorem{lemma}[theorem]{Lemma}
\newtheorem{proposition}[theorem]{Proposition}
\newtheorem{corollary}[theorem]{Corollary}
\theoremstyle{definition}
\newtheorem{definition}[theorem]{Definition}
\newtheorem{question}[theorem]{Question}
\newtheorem{remark}[theorem]{Remark}
\newtheorem{example}[theorem]{Example}
\newtheorem*{notation}{Notation}
\newcommand{\sref}[1]{\fullref{#1}}
\newcommand{\N}{\mathbb{N}}
\renewcommand{\S}{\mathbb{S}}
\renewcommand{\D}{\mathbb{D}}
\newcommand{\K}{\mathbb{K}}
\newcommand{\Links}{\mathscr{L}}
\newcommand{\Diagrams}{\mathscr{D}}
\newcommand{\BImmersions}{\mathscr{B}}
\newcommand{\id}{\operatorname{id}}
\newcommand{\into}{\hookrightarrow}
\newcommand{\minus}{\smallsetminus}
\newcommand{\lk}{\operatorname{lk}}
\newcommand{\bracket}[1]{\langle #1 \rangle}
\newcommand{\cs}{\mathbin{\sharp}}
\newcommand{\Sign}{\operatorname{sign}}
\newcommand{\Null}{\operatorname{null}}
\newcommand{\ord}{\operatorname{ord}}
\newcommand{\U}[1][N]{U_{#1}}
\newcommand{\V}[1][N]{V_{#1}}
\newcommand{\rV}[1][N]{\tilde{V}_{#1}}
\newcommand{\onehalf}{{\smash{^{_1}\!\!/\!_{^2}}}}
\newcommand{\Horz}{=}
\renewcommand{\Vert}{\parallel}
\newcommand{\Kh}{\mathit{Kh}}
\newcommand{\CKh}{\mathit{CKh}}
\newcommand{\cable}[1]{\smash{#1}}
\begin{document} %%%%%%%%%%%%%%%%%%%%%%%%%%%%%%%%%%%%%%%%%%%%%%%%%%%%%%%%%%%%
%%%%%%%%%%%%%%%%%%%%%%%%%%%%%%%%%%%%%%%%%%%%%%%%%%%%%%%%%%%%%%%%%%%%%%%%%%%%%

\begin{abstract}
  For every $n$--component ribbon link $L$ we prove 
  that the Jones polynomial $V(L)$ is divisible by 
  the polynomial $V(\bigcirc^n)$ of the trivial link.
  This integrality property allows us to define a generalized determinant
  $\det V(L) := [V(L)/V(\bigcirc^n)]_{(t \mapsto -1)}$, for which 
  we derive congruences reminiscent of the Arf invariant:
  every ribbon link $L = K_1 \cup\dots\cup K_n$ satisfies 
  $\det V(L) \equiv \det(K_1) \cdots \det(K_n)$ modulo $32$,
  whence in particular $\det V(L) \equiv 1$ modulo $8$.

  These results motivate to study the power series expansion
  $V(L) = \sum_{k=0}^\infty d_k(L) h^k$ at $t=-1$, instead of $t=1$ as usual.
  We obtain a family of link invariants $d_k(L)$, 
  starting with the link determinant $d_0(L) = \det(L)$ 
  obtained from a Seifert surface $S$ spanning $L$.
  The invariants $d_k(L)$ are not of finite type with 
  respect to crossing changes of $L$, but they turn out to be 
  of finite type with respect to band crossing changes of $S$. 
  This discovery is the starting point of a theory of surface invariants of finite type,
  which promises to reconcile quantum invariants with the theory 
  of Seifert surfaces, or more generally ribbon surfaces. % in $\R^3$.
\end{abstract}

\begin{asciiabstract}
  For every n-component ribbon link L we prove 
  that the Jones polynomial V(L) is divisible 
  by the polynomial V(O^n) of the trivial link.
  This integrality property allows us to define a generalized 
  determinant  det V(L) := [V(L)/V(O^n)]_(t=-1), for which 
  we derive congruences reminiscent of the Arf invariant:
  every ribbon link  L = (K_1,...,K_n)  satisfies 
  det  V(L) = det(K_1) ... det(K_n) modulo 32,
  whence in particular  det V(L) = 1 modulo 8.

  These results motivate to study the power series expansion
  V(L) = \sum_{k=0}^\infty d_k(L) h^k at t=-1, instead of t=1 as usual.
  We obtain a family of link invariants d_k(L), 
  starting with the link determinant d_0(L) = det(L) 
  obtained from a Seifert surface S spanning L.
  The invariants d_k(L) are not of finite type with 
  respect to crossing changes of L, but they turn out to be 
  of finite type with respect to band crossing changes of S. 
  This discovery is the starting point of a theory of surface invariants of 
  finite type, which promises to reconcile quantum invariants with the theory 
  of Seifert surfaces, or more generally ribbon surfaces.
\end{asciiabstract}

%%%%%%%%%%%%%%%%%%%%%%%%%%%%%%%%%%%%%%%%%%%%%%%%%%%%%%%%%%%%%%%%%%%%%%%%%%%%%

% \headline{20mm}{\eprintinfo}

\maketitle

% \vspace*{3mm}

%%%%%%%%%%%%%%%%%%%%%%%%%%%%%%%%%%%%%%%%%%%%%%%%%%%%%%%%%%%%%%%%%%%%%%%%%%%%%

\section{Introduction} \label{sec:Introduction}

It is often lamented that, after more than $20$ years 
of intense research and spectacular success, we still do not have 
a good topological understanding of the Jones polynomial.  
This is in sharp contrast to the Alexander polynomial:
to mention just one prominent example (Fox--Milnor \cite{FoxMilnor:1966}), 
the Alexander polynomial $\Delta(K)$ of every ribbon or slice knot $K$ has a beautiful 
and very strong symmetry, $\Delta(K) = f(q) \cdot f(q^{-1})$, 
whereas no similar result is known for the Jones polynomial.

Only a few special values of the Jones polynomial have 
a topological interpretation, most notably the determinant 
$\det(L) = V(L)|_{(q \mapsto i)} = \Delta(L)|_{(q \mapsto i)}$. 
(See \sref{sec:Definitions} for definitions; 
we use the parametrization $t = q^2$ throughout.)

\subsection{Statement of results}

We define the nullity $\Null V(L)$ of the Jones polynomial $V(L)$ 
to be the multiplicity of the zero at $q=i$.
The trivial link with $n$ components, for example, 
satisfies $V(\bigcirc^n) = (q^{+1}+q^{-1})^{n-1}$  
and thus $\Null V(\bigcirc^n) = n-1$.

\begin{Lemma} \label{Lem:UpperNullityBound}
  The Jones nullity equals the multiplicity 
  of the factor $(q^{+1}+q^{-1})$ in $V(L)$.
  Every $n$--component link $L$ satisfies $V(L)|_{(q \mapsto 1)} = 2^{n-1}$
  and thus $0 \le \Null V(L) \le n-1$.
\end{Lemma}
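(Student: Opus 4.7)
The plan is to first establish the specialization $V(L)|_{q=1} = 2^{n-1}$ by skein-induction, then identify the Jones nullity with the multiplicity of $(q+q^{-1})$ as a factor of $V(L)$, and finally combine the two to obtain the bound $0 \le \Null V(L) \le n-1$.

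For the specialization, I would apply the Jones skein relation, which in the $q$--parametrization takes the form $q^{-2} V(L_+) - q^2 V(L_-) = (q - q^{-1}) V(L_0)$. At $q=1$ the right-hand side vanishes, giving $V(L_+)|_{q=1} = V(L_-)|_{q=1}$: the evaluation at $q=1$ is invariant under crossing changes. A standard induction on the crossing number therefore reduces $V(L)|_{q=1}$ to the trivial link value $V(\bigcirc^n)|_{q=1} = 2^{n-1}$.

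For the factor identification, I would write $V(L) = (q+q^{-1})^k \, P(q)$ with $k \ge 0$ maximal. Since $q^2+1 \in \mathbb{Z}[q]$ is primitive, Gauss's lemma ensures that $P(q) \in \mathbb{Z}[q^{\pm 1}]$. The factor $q+q^{-1} = q^{-1}(q^2+1)$ has a simple zero at $q=i$ (the derivative $1-q^{-2}$ equals $2$ there), so $V(L)$ vanishes to order at least $k$ at $q=i$. Conversely $P(i) \neq 0$: as $P$ has real coefficients, the conjugate roots $\pm i$ of $q^2+1$ occur with equal multiplicity in $P$, so $P(i)=0$ would give $(q+q^{-1}) \mid P$, contradicting the maximality of $k$. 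This yields $\Null V(L) = k$.

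Finally, substituting $q=1$ into the factorization gives $2^{n-1} = 2^k \cdot P(1)$. Since $P(1) \in \mathbb{Z}$ by the previous step, $2^{n-1-k} = P(1)$ must be an integer, forcing $k \le n-1$; the lower bound $k \ge 0$ is automatic since $V(L) \neq 0$. The only delicate point in the whole argument is the invocation of Gauss's lemma securing integrality of $P(q)$: without it, the rational identity $2^{n-1} = 2^k P(1)$ would not immediately force the exponent bound that upgrades the specialization into the nullity estimate.
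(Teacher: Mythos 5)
Your proposal is correct and follows essentially the same route as the paper: the paper also establishes $V(L)|_{q=1}=2^{n-1}$ from the skein relation, identifies the nullity at $q=i$ with the multiplicity of the factor $q+q^{-1}$ via the minimal-polynomial/Gauss argument (stated there for general prime $N$ with $\omega$ a $2N$th root of unity, specializing to $N=2$), and then evaluates the factorization $V(L)=(q+q^{-1})^{\nu}\tilde V(L)$ at $q=1$ to get $2^{n-1}=2^{\nu}\tilde V(L)|_{q=1}$ and hence $\nu\le n-1$. You correctly flag integrality of the quotient as the one delicate point; that is exactly the role the paper assigns to the minimal-polynomial argument.
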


This inequality provides a first piece in the puzzle:
the same bounds $0 \le \Null(L) \le n-1$ hold 
for Murasugi's nullity derived from the Seifert form;
see \sref{sec:Definitions} for details. % \sref{sub:SignatureNullity}.

\begin{Proposition} \label{Prop:EulerJonesNullity}
  Consider a link $L \subset \R^3$ bounding a properly embedded 
  smooth surface $S \subset \R^4_+$ without local minima.
  If $S$ has positive Euler characteristic $n = \chi(S)$, 
  then the Jones polynomial $V(L)$ is divisible 
  by $V(\bigcirc^n) = (q^+ + q^-)^{n-1}$ and so $\Null V(L) \ge n-1$.
\end{Proposition}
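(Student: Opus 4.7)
The plan is to exploit the Morse structure of $S$ to express $L$ as the result of $b$ band (saddle) moves applied to a trivial link, and then to propagate divisibility by $V(\bigcirc^n) = (q + q^{-1})^{n-1}$ through these moves via the Kauffman bracket skein relation.

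First, I would perturb $S$ so that the fourth coordinate restricts to a Morse function. Since $S$ has no local minima by hypothesis, the critical points consist of $b$ saddles and $m$ local maxima, giving $\chi(S) = m - b = n$. A standard Morse-theoretic rearrangement puts all saddles below all maxima; at a horizontal level between them, the slice of $S$ is $m$ small disjoint circles (one inside a ball about each maximum), and hence forms the trivial link $\bigcirc^m \subset \R^3$. Descending through the saddles one at a time produces a sequence $\bigcirc^m = L_b, L_{b-1}, \ldots, L_0 = L$ of links, consecutive terms differing by a single band move.

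Next I would induct on $b$. The base case $b = 0$ is trivial, since then $L = \bigcirc^n$. For the inductive step I perform the bottommost saddle to pass from $L = L_0$ to $L' := L_1$, which bounds a surface with $b - 1$ saddles and $\chi = n + 1$; the inductive hypothesis gives $(q+q^{-1})^n \mid V(L')$. Locally at that band, $L$ and $L'$ appear as the two Kauffman smoothings of an auxiliary crossing; writing $L^{\pm}$ for the two crossing-insertions (the boundaries of $S$ with that band replaced by a $\pm$-twisted band), the bracket skein inverts to
\[
(A^2 - A^{-2})\,\langle L \rangle \;=\; A\,\langle L^+ \rangle \;-\; A^{-1}\,\langle L^- \rangle.
\]
Because $A^2 - A^{-2}$ does not vanish at $q = i$ and is coprime to $A^2 + A^{-2}$ (which is proportional to $q + q^{-1}$), any divisibility of the right-hand side by $(q + q^{-1})^r$ transfers to $\langle L \rangle$. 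The task therefore reduces to establishing $(q + q^{-1})^{n-1} \mid V(L^{\pm})$.

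The main obstacle is precisely this last step: the twisted-band surfaces bounding $L^{\pm}$ have the same $b$ and the same $\chi = n$ as the original, so the induction on $b$ does not directly apply. I would resolve this by running the induction jointly on the entire skein triple $(L, L^+, L^-)$, exploiting that half-twisting a band preserves the no-local-minima hypothesis and using the inductive hypothesis for $L'$ to control the complementary combination $A\,\langle L^- \rangle - A^{-1}\,\langle L^+ \rangle$. The ribbon (no-clasp) structure that the no-local-minima hypothesis forces on the $\R^3$-projection of $S$ is the essential ingredient, since it excludes the pathological band moves that would drop the nullity by more than one -- for instance the putative fusion $\bigcirc^3 \to \mathrm{Hopf}$ cannot be realized by any surface in $\R^4_+$ without a local minimum, as the Hopf link is not slice -- thereby ensuring that each band move loses at most one factor of $q + q^{-1}$. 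The Proposition follows, and the nullity bound $\Null V(L) \geq n - 1$ is then immediate from Lemma \ref{Lem:UpperNullityBound}.
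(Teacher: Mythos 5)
Your Morse-theoretic setup is fine and you have correctly located the critical difficulty, but the proposed resolution of that difficulty does not work, and this is a genuine gap rather than a detail. The identity $(A^{2}-A^{-2})\langle L\rangle = A\langle L^{+}\rangle - A^{-1}\langle L^{-}\rangle$ is just a repackaging of the two relations $\langle L^{\pm}\rangle = A^{\pm 1}\langle L\rangle + A^{\mp 1}\langle L'\rangle$; modulo $(q^{+}+q^{-})^{n-1}$ these say only that $\langle L^{+}\rangle \equiv A\langle L\rangle$ and $\langle L^{-}\rangle \equiv A^{-1}\langle L\rangle$ (since $\langle L'\rangle$ vanishes to even higher order by your induction hypothesis). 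They determine $\langle L^{\pm}\rangle$ from $\langle L\rangle$ but carry no information about $\langle L\rangle$ itself: two equations, three unknowns, and the one controlled quantity $\langle L'\rangle$ drops out. A ``joint induction on the skein triple'' therefore has nothing to induct on, because $L$, $L^{+}$ and $L^{-}$ all bound surfaces with the same number of saddles and the same Euler characteristic, and no secondary parameter decreases. The sentence ``each band move loses at most one factor of $q+q^{-1}$'' is precisely the statement to be proved, and the appeal to the non-sliceness of the Hopf link, while true, is a topological obstruction to certain fusions, not a skein-theoretic mechanism that produces divisibility.

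The paper escapes this trap by changing the combinatorial model: it replaces the surface in $\R^4_+$ by an immersed ribbon surface in $\R^3$ (Fox's equivalence, recalled in \fullref{sec:RibbonLinks}) and inducts on the number of \emph{ribbon singularities} rather than on saddles. The point is that the difference between a ribbon singularity and the corresponding band crossing, computed from the $16$-term Kauffman resolutions, collapses to the six-term identity \eqref{eq:BandSingularityRemoval}, in which \emph{every} diagram on the right-hand side bounds a ribbon surface with strictly fewer singularities and with Euler characteristic $n$ or $n+1$; the induction hypothesis then applies to each of those terms and to the desingularized surface $S'$ as well, and the induction closes. If you want to keep the four-dimensional language you must still import this extra structure---a planar band-diagram presentation of the immersed surface and an induction parameter that genuinely decreases under skein resolution---since the saddle count alone cannot do the job.
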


The condition is equivalent to saying that $L$ bounds an immersed 
surface $S \subset \R^3$ of Euler characteristic $n$ and 
having only ribbon singularities; see \sref{sec:RibbonLinks} for details.

Again the same inequality, $\Null(L) \ge \chi(S)-1$, holds for the Seifert nullity.  
Upper and lower bound for $\Null V(L)$ co\"incide precisely for ribbon links:

\begin{Theorem} \label{Thm:RibbonJonesNullity}
  Every $n$--component ribbon link $L$ satisfies $\Null V(L) = n-1$.
  % For every $n$--component ribbon link $L$ the Jones polynomial $V(L)$ 
  % is divisible by $V(\bigcirc^n) = (q^{+1}+q^{-1})^{n-1}$, and so $\Null V(L) = n-1$.  
\end{Theorem}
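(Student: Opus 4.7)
The plan is to recognize that the theorem is a sandwich argument: the two preceding results provide matching upper and lower bounds on $\Null V(L)$, and the defining property of ribbon links realizes the hypothesis needed to collapse them.

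First I would unfold the definition of a ribbon link. By definition (made precise in \sref{sec:RibbonLinks}), an $n$--component ribbon link $L$ bounds an immersed surface $S \subset \R^3$ consisting of $n$ disks (one per component), whose only singularities are ribbon intersections. Each disk has Euler characteristic $1$, so $\chi(S) = n$. Pushing the ribbon singularities into $\R^4_+$ in the standard way produces a properly embedded smooth surface $S' \subset \R^4_+$ of the same Euler characteristic $n$ and with no local minima, so the hypothesis of \autoref{Prop:EulerJonesNullity} is satisfied.

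Applying \autoref{Prop:EulerJonesNullity} with $\chi(S) = n$ immediately yields the lower bound $\Null V(L) \ge n-1$, since $V(L)$ is then divisible by $V(\bigcirc^n) = (q^{+1}+q^{-1})^{n-1}$. On the other hand, \autoref{Lem:UpperNullityBound} supplies the matching upper bound $\Null V(L) \le n-1$, valid for every $n$--component link. Combining the two gives $\Null V(L) = n-1$, which is the assertion.

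There is no serious obstacle here: the theorem is essentially a corollary of the preceding proposition and lemma, once one verifies that the ribbon disks for $L$ furnish a surface of Euler characteristic exactly $n$ without local minima in $\R^4_+$. The only point that deserves explicit mention is the standard construction that turns the ribbon intersections in $\R^3$ into embedded saddles when the surface is pushed into the upper half-space, ensuring no local minima are introduced.
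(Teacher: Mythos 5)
Your proposal is correct and is essentially the paper's own argument: the upper bound comes from \fullref{Lem:UpperNullityBound} (via Corollary \ref{cor:NullityBound}) and the lower bound from \fullref{Prop:EulerJonesNullity} applied to the $n$ ribbon disks, exactly as in the proof of Corollary \ref{cor:JonesDeterminant}. The only superfluous step is the detour through $\R^4_+$: the proposition is proved directly for immersed ribbon surfaces in $\R^3$, so no pushing-in construction is needed.
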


This corresponds to the Seifert nullity, so we see 
that $\Null(L) = \Null V(L)$ for every ribbon link $L$.
It would be interesting to know whether this equality 
generalizes to all links, see Questions \ref{quest:BoundaryLinks}
and \ref{quest:ClassicalNullity} towards the end of this article.

Expanding $V(L)$ in $q = \exp(h/2)$ 
we obtain a power series $V(L) = \sum_{k=0}^\infty v_k(L) h^k$ 
whose coefficients $v_k(L)$ are link invariants of finite type 
in the sense of Vassiliev \cite{Vassiliev:1990} and Goussarov \cite{Goussarov:1991},
see also Birman--Lin \cite{BirmanLin:1993} and Bar-Natan \cite{BarNatan:1995}.
The above results motivate to study the power series expansion
$V(L) = \sum_{k=0}^\infty d_k(L) h^k$ in $q = i \exp(h/2)$. 
We obtain a family of invariants $d_k(L)$ 
starting with the link determinant $d_0(L) = \det(L)$.
The Jones nullity $\Null V(L)$ is 
the smallest index $\nu$ such that $d_\nu(L) \ne 0$.
The link invariants $d_k(L)$ are not of finite type with respect to crossing changes.
They enjoy, however, the following surprising property:

\begin{Proposition} \label{Prop:FiniteTypeSurfaceInvariants}
  The surface invariant $S \mapsto d_k(\partial S)$
  is of finite type with respect to band crossing changes.
  More precisely, it is of degree $\le m$ for $m = k + 1 - \chi(S)$.
\end{Proposition}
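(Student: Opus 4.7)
The plan is to prove the proposition by induction on the number of band double points of a band--singular ribbon surface, using Proposition~\ref{Prop:EulerJonesNullity} as base case. The inductive step rests on a local skein identity showing that each band crossing change contributes an extra factor of $(q+q^{-1})$, which vanishes to first order at $q=i$ and thus raises the $h$--order by one.

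\textbf{Reformulation.} Via the standard Vassiliev--Goussarov dictionary I extend $V$ to band--singular ribbon surfaces by resolving each transverse double arc as the difference of its two over/under resolutions. For such $\tilde{S}$ with $j$ band double points, set $\tilde{V}(\tilde{S}) := \sum_{\epsilon} (-1)^{|\epsilon|}\, V(\partial \tilde{S}_\epsilon)$, summed over resolutions $\epsilon \in \{+,-\}^j$. The claim of the proposition then amounts to the estimate
\[
\tilde{V}(\tilde{S}) \;=\; O\bigl(h^{\,j+\chi(S)-1}\bigr) \qquad \text{for every such } \tilde{S}.
\]

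\textbf{Base case and induction.} For $j=0$ we have $\tilde{V}(\tilde{S}) = V(\partial S)$; the bound is exactly Proposition~\ref{Prop:EulerJonesNullity} when $\chi(S) \geq 1$, and is automatic when $\chi(S) \leq 0$ since $V(\partial S)$ is a Laurent polynomial in $q$, hence entire in $h$. For the inductive step, fix a band double point $D$ and group the $2^{j+1}$ summands defining $\tilde{V}(\tilde{S})$ into $2^{j}$ differences of the form $V(L_+)-V(L_-)$, where $L_\pm$ arise from the two local resolutions at $D$ with the remaining $j$ double points held fixed. The order gain then reduces to a local skein identity
\[
V(L_+) - V(L_-) \;=\; (q+q^{-1})\cdot W
\]
valid whenever $L_\pm$ are related by a band crossing change, where $W$ should admit an interpretation as $V$ of the boundary of a band--singular ribbon surface obtained from $\tilde{S}$ by smoothing the band intersection at $D$ in a way that preserves $\chi(S)$. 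Packaging the $\epsilon'$--sum of these quotients into an auxiliary surface with $j$ double points and the same Euler characteristic, the inductive hypothesis yields $O(h^{\,j+\chi-1})$; multiplication by $(q+q^{-1}) = O(h)$ produces the required bound $O(h^{\,j+\chi})$.

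\textbf{Main obstacle.} The crux is the local skein identity for band crossing changes. Such a move amounts to four simultaneous sign changes on the four boundary strands emanating from a band intersection, all of the same sign. I expect the cleanest derivation is via the Kauffman bracket: expanding at the four crossings produces sixteen smoothing states whose coefficients must be grouped pairwise to reveal divisibility by $(A^2+A^{-2})$, the Kauffman--bracket avatar of $(q+q^{-1})$ after the writhe correction. An alternative route is the $U_q(sl_2)$ decomposition of the 2--cable representation $V \otimes V = V^{(2)} \oplus V^{(0)}$, in which $(q+q^{-1})$ appears naturally as a quantum dimension. Beyond establishing the algebraic identity, the delicate point is the topological identification of the quotient $W$ as a Jones polynomial of a specific auxiliary surface, so that the Euler--characteristic bookkeeping carries through the induction cleanly.
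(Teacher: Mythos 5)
Your overall strategy -- induction on the number of changed band crossings, with Proposition~\ref{Prop:EulerJonesNullity} as the base case and a local skein identity at a single band crossing driving the inductive step -- is exactly the paper's (Lemma~\ref{lem:FiniteTypeSurfaceInvariants}). However, the crux that you defer, the local identity, is stated in a form that is not correct and cannot be made to work as written. A band crossing change does \emph{not} satisfy $V(L_+)-V(L_-)=(q+q^{-1})\cdot W$ with $W$ the Jones polynomial of the boundary of a \emph{single} auxiliary band--singular surface of the \emph{same} Euler characteristic. What actually happens when you subtract the two $16$--state Kauffman expansions (of the band crossing and its $90^\circ$ rotation) is that ten states cancel and six survive, of two distinct kinds:
\begin{equation*}
\langle D\rangle-\langle D_x\rangle \;=\; (A^{4}-A^{-4})\bigl(\langle E_1\rangle-\langle E_2\rangle\bigr)
\;+\;(A^{2}-A^{-2})\bigl(\langle F_1\rangle-\langle F_2\rangle+\langle F_3\rangle-\langle F_4\rangle\bigr),
\end{equation*}
where $E_1,E_2$ are ribbon surfaces with the \emph{same} Euler characteristic $\chi(S)$ and $F_1,\dots,F_4$ have Euler characteristic $\chi(S)+1$. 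With $q=-A^{-2}$ the coefficient $A^{4}-A^{-4}$ does contain the factor $q+q^{-1}$, but $A^{2}-A^{-2}$ corresponds to $q-q^{-1}$, which does \emph{not} vanish at $q=i$; the extra order of vanishing for the $F$--terms comes instead from their larger Euler characteristic via the induction hypothesis. So the ``extra factor of $h$'' is supplied by two different mechanisms on the right-hand side, and your packaging of everything into one surface $W$ with $\chi(W)=\chi(S)$ conflates them. The induction still closes, but only if you apply the hypothesis term by term to all six surfaces (each still carrying the remaining $j$ unresolved double points) and track the coefficient for the first pair and the Euler characteristic for the second quadruple separately.

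Two smaller points. First, the alternating sums in the definition of finite type also allow $X$ to contain band \emph{twists} (\fullref{fig:CrossingChange}b); these require a separate, easier skein computation (a two--term relation from \eqref{eq:HOMFLYPT}) that your argument does not address. Second, translating the bracket identity into one for $V$ requires checking that all six resolved diagrams can be given orientations with equal writhe (the paper arranges the strands antiparallel for this); without that normalization the powers of $-A^{-3}$ would pollute the divisibility statement. Neither of these is fatal, but the misstated local identity is a genuine gap: as formulated, the claimed factorization is false, and the correct six--term relation with its mixed bookkeeping is precisely the technical content of the proof.
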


See \sref{sub:FiniteTypeSurfaceInvariants} for definitions.
The only invariant of degree $<0$ is the zero map:
for $k < \chi(S)-1$ we thus have $d_k(\partial S) = 0$ 
as in \fullref{Prop:EulerJonesNullity}.
The case $m=0$ corresponds to $k = \chi(S)-1$; being of degree $\le 0$
means that $d_k(\partial S)$ is invariant under band crossing changes.
Specializing to ribbon links we obtain the following result:

\begin{Corollary} \label{Cor:BandCrossingChanges}
  For every $n$--component ribbon link $L = K_1 \cup\dots\cup K_n$ 
  the Jones determinant $\det V(L) := [V(L)/V(\bigcirc^n)]_{(q \mapsto i)}$
  is invariant under band crossing changes.  % As a consequence, 
  If $L$ bounds an immersed ribbon surface $S \subset \R^3$ 
  consisting of $n$ disjoint disks, then we have 
  $\det V(L) = \det(K_1) \cdots \det(K_n)$, whence
  $\det V(L)$ is an odd square integer.
\end{Corollary}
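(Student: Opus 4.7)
\medskip

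\noindent\textbf{Proof plan.} The plan is to express $\det V(L)$ as a unit multiple of the coefficient $d_{n-1}(L)$ in the expansion of $V(L)$ around $q=i$, then to deduce both claims of the corollary from \fullref{Prop:FiniteTypeSurfaceInvariants}, reducing the computation to a split surface via band crossing changes between distinct components.

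First I would check well-definedness. By \fullref{Thm:RibbonJonesNullity} one has $\Null V(L) = n-1$, so $V(\bigcirc^n) = (q+q^{-1})^{n-1}$ divides $V(L)$ in $\mathbb{Z}[q^{\pm 1}]$ and the quotient can be evaluated at $q=i$. Setting $q = i\exp(h/2)$ gives $q+q^{-1} = 2i\sinh(h/2)$, hence $V(\bigcirc^n) = i^{n-1}h^{n-1} + O(h^{n+1})$. Since $V(L) = \sum_k d_k(L)\,h^k$ with $d_0 = \dots = d_{n-2} = 0$ for an $n$--component ribbon link, the ratio at $h=0$ is $\det V(L) = i^{1-n} d_{n-1}(L)$. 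Applying \fullref{Prop:FiniteTypeSurfaceInvariants} with $k = n-1$ and $\chi(S) = n$ gives $m = 0$, so the surface invariant $S \mapsto d_{n-1}(\partial S)$ is of finite-type degree zero and hence unchanged by band crossing changes. This establishes the first assertion.

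For the evaluation, I would reduce $S$ by band crossing changes to a split form $S' = S'_1 \sqcup \cdots \sqcup S'_n$, the $S'_i$ contained in pairwise disjoint $3$--balls, with $\partial S'_i = K_i$. The idea is to perform a band crossing change at every ribbon singularity between two distinct disks $D_i$ and $D_j$: such moves alter the ambient arrangement but leave each boundary knot $K_i$ unchanged, the two arcs involved belonging to different components. After all inter-component singularities have been removed, the disks are disjointly embedded and may be isotoped into separate balls, yielding a split link $L' = K_1 \sqcup \cdots \sqcup K_n$. For such a split link $V(L') = (q+q^{-1})^{n-1}\prod_i V(K_i)$, hence $\det V(L') = \prod_i V(K_i)|_{q=i} = \prod_i \det(K_i)$, and by the band-crossing invariance proved above, $\det V(L) = \det V(L')$. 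Finally, each $K_i$ is itself a ribbon knot, so Fox--Milnor produces a factorization $\Delta(K_i)(t) = f_i(t) f_i(t^{-1})$ giving $\det(K_i) = f_i(-1)^2$, an odd perfect square; the product $\det V(L) = \prod_i \det(K_i)$ is therefore an odd square integer.

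The main obstacle I foresee is the geometric reduction: one must verify, from the paper's formal definition of band crossing changes, that every ribbon singularity between two distinct component disks can be eliminated by a band crossing change (combined with ambient isotopy) while preserving the individual boundary knots $K_i$. Once this separation lemma is established, the rest is routine: an application of \fullref{Prop:FiniteTypeSurfaceInvariants}, a direct computation of the Jones polynomial of a split link, and Fox--Milnor for each ribbon component.
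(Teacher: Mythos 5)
Your treatment of the first assertion is correct and essentially the paper's: well-definedness follows from $\Null V(L)=n-1$, the identification $\det V(L)=i^{1-n}d_{n-1}(L)$ is as in the paper, and invariance under band crossing changes is Proposition \ref{prop:BandCrossingChange}, of which your appeal to Proposition \ref{prop:FiniteTypeSurfaceInvariants} with $m=0$ is just the finite-type reformulation.

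The second assertion, however, rests on a false reduction. You propose to ``perform a band crossing change at every ribbon singularity between two distinct disks'' and you isolate as the key lemma that every mixed ribbon singularity can be eliminated by a band crossing change. This cannot work: a band crossing change swaps the over/under data at a band crossing (\fullref{fig:CrossingChange}) and never removes a ribbon singularity. The operation that trades a ribbon singularity for a band crossing is the desingularization \eqref{eq:Desingularization}, and $\det V$ is \emph{not} invariant under it --- for mixed singularities it changes by a multiple of $32$ (Theorem \ref{thm:CongruenceMod32}), and the ribbon link $L=10n36$ of Example \ref{exm:32IsOptimal} has $\det V(L)=-23$ while $\det(K_1)\det(K_2)=9$. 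If your separation lemma were true, the product formula would hold for \emph{every} ribbon link, contradicting this example and making \fullref{Thm:MultiplicativityMod32} pointless. What you have missed is that the hypothesis ``$S$ consists of $n$ disjoint disks'' already means there are no mixed ribbon singularities (each disk may still pierce itself); the only obstruction to pushing the disks into disjoint balls is the over/under data at band crossings between bands of distinct components, and undoing those is precisely what band crossing changes do --- this is the content of the paper's Corollary \ref{cor:JonesDetMult}, which invokes the fact that two ribbon immersions of the same abstract surface with the same combinatorial singularity pattern differ by band crossing changes. With the reduction stated this way, the remainder of your argument (the split-link computation and Fox--Milnor applied to each ribbon knot $K_i$) goes through.
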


% Such a link is a boundary link; this situation should be investigated in more detail.

For a ribbon surface $S \subset \R^3$ consisting of disks 
which may intersect each other, multiplicativity only holds 
modulo $32$, and examples show that this is best possible:

\begin{Theorem} \label{Thm:MultiplicativityMod32}
  Every $n$--component ribbon link $L = K_1 \cup\dots\cup K_n$ satisfies 
  $\det V(L) \equiv \det(K_1) \cdots \det(K_n)$ modulo $32$,
  and in particular $\det V(L) \equiv 1$ modulo $8$.
\end{Theorem}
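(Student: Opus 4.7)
The plan is to induct on the number of \emph{mixed} ribbon singularities (those involving two distinct disks $D_i \ne D_j$) of a fixed ribbon surface $S = D_1 \cup \dots \cup D_n \subset \R^3$ spanning $L$, reducing $L$ step by step toward the split union $K_1 \sqcup \dots \sqcup K_n$ while tracking $\det V(L)$ modulo $32$. In the base case no mixed singularity remains, so the $D_i$ are pairwise disjoint compact contractible subsets of $\R^3$; small regular neighborhoods then provide pairwise disjoint $3$--balls containing them, and $L$ is split. The product formula $V(K_1 \sqcup \dots \sqcup K_n) = (q+q^{-1})^{n-1} \prod_i V(K_i)$ gives $\det V(L) = \prod_i \det(K_i)$ with equality, so the congruence holds trivially.

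For the inductive step, \fullref{Cor:BandCrossingChanges} tells us $\det V(L)$ is invariant under band crossing changes of $S$. I would use this freedom to bring a chosen mixed singularity into a standard local model, and then perform a local rerouting of the pierced ribbon arc, producing a ribbon link $L'$ whose components remain $K_1, \dots, K_n$ and whose ribbon surface carries one fewer mixed singularity. The heart of the argument is then the local identity
\[
  \det V(L) \;\equiv\; \det V(L') \pmod{32}.
\]

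Proving this local identity is the main obstacle. The natural attack is to expand $V(L) - V(L')$ via iterated Jones skein relations on the local tangle and then evaluate $[V(L) - V(L')]/V(\bigcirc^n)$ at $q = i$. At this value $(q+q^{-1})|_{q=i} = 0$, and the task is to exhibit enough additional vanishing in the skein expansion --- beyond the $n-1$ factors already supplied by \fullref{Thm:RibbonJonesNullity} --- to force a multiple of $2^5 = 32$ in the residue. Since the paper asserts that examples realize $32$ as best possible, the bookkeeping must be done exactly; a cruder estimate would yield only divisibility by $8$ or $16$.

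Finally, the \emph{in particular} clause $\det V(L) \equiv 1 \pmod 8$ follows from the main congruence together with Fox--Milnor. Each component $K_i$ of a ribbon link is slice --- push $D_i$ off into $\R^4_+$ to resolve its ribbon singularities into a smoothly embedded disk bounded by $K_i$ --- so $\Delta(K_i)(t) = f_i(t) f_i(t^{-1})$ for some $f_i \in \mathbb{Z}[t^{\pm 1}]$, whence $\det(K_i) = f_i(-1)^2$ is an odd perfect square. Every odd square is $\equiv 1 \pmod 8$, so $\prod_i \det(K_i) \equiv 1 \pmod 8$, and combining with the $\pmod{32}$ statement yields $\det V(L) \equiv 1 \pmod 8$.
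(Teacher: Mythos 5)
Your overall architecture is the right one and matches the paper's: induct on the number of mixed ribbon singularities, use invariance of $\det V(L)$ under band crossing changes, reduce to the split union where multiplicativity is exact, and deduce the \emph{in particular} clause from Fox--Milnor. But the proof has a genuine gap exactly where you flag ``the main obstacle'': you never establish the local congruence $\det V(L) \equiv \det V(L') \pmod{32}$ when one mixed singularity is converted to a band crossing, and this is the entire content of the theorem. The natural skein expansion you propose is the paper's Equation \eqref{eq:Determinant}, which expresses $[S]-[S']$ as $-2\bigl([S_1]-[S_2]+[S_3]-[S_4]\bigr)$ for four auxiliary ribbon surfaces. For a mixed singularity between two disks all four $S_i$ consist of disks, so each $[S_i]\equiv 1 \pmod 8$ and the alternating sum is $\equiv 0 \pmod 8$ --- which only yields $[S]\equiv[S']\pmod{16}$. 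That is precisely the paper's Remark \ref{rem:CongruenceMod16}, and no amount of ``exact bookkeeping'' of the same expansion closes the factor of $2$.

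Upgrading $16$ to $32$ requires showing the finer cancellation $[S_1]-[S_2]\equiv[S_3]-[S_4]\pmod{16}$, and the paper needs substantial new machinery for this: Lemma \ref{lem:Congruence}, whose case $d(S)=1$ refines $[S]\equiv 0 \pmod 4$ to $[S]\equiv 4e(S)\pmod 8$ in terms of the number of \emph{essential} singularities, together with Remark \ref{rem:DeltaMod16} tracking the defect $\Delta=e(S_4)-e(S_3)$ when a pure singularity is resolved; and then a double induction (on the ribbon number and on the number of mixed singularities touching $S_\Horz$) whose base case identifies $S_1,S_3$ as connected sums and $S_2,S_4$ as mutants up to band twisting, forcing $[S_1]=[S_3]$ and $[S_2]=[S_4]$. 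The delicate case where the two disks meet in a second ribbon singularity, which becomes a pure singularity in $S_3$ and $S_4$, is handled only because the $8\Delta$ discrepancies cancel between $S_3$ and $S_4$. None of this is sketched or even hinted at in your proposal, so the argument as written proves at best the mod $16$ statement.
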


These results can be seen as a first step towards 
understanding the Jones polynomial of ribbon links. 
It is plausible to expect that our results can be extended in several ways, 
and we formulate some natural questions in \sref{sec:OpenQuestions}.
As an application, \fullref{Thm:RibbonJonesNullity} is used 
in \cite{EisermannLamm:SymJones} as an integrality property 
of the Jones polynomial of symmetric unions.

\subsection{Related work} \label{sub:RelatedWork}

Little is known about the Jones polynomial of ribbon knots, 
but there is strong evidence that the expansion 
at $t=-1$ (that is, $q=i$) plays a crucial r\^ole.

First of all, for every ribbon knot $K$,
the determinant $d_0(K) = \det(K) = V(K)_{t \mapsto -1}$
is a square integer, see Remark \ref{rem:AlexanderSlice},
and the resulting congruence $\det(K) \equiv 1 \mod{8}$ 
is related to the Arf invariant of knots,
see Lickorish \cite[chapter 10]{Lickorish:1997}.

Next, the first-order term $d_1(K) = -\bigl[ \frac{d}{dt}V(K) \bigr]_{t \mapsto -1}$ 
figures prominently in the work of Mullins \cite[Theorem 5.1]{Mullins:1993},
who discovered a beautiful relation with the Casson--Walker invariant 
$\lambda(\Sigma^2_K)$ of the $2$-fold branched cover of $\S^3$ branched along $K$:
\begin{equation} \label{eq:Mullins}
  \lambda(\Sigma^2_K) 
  = \frac{1}{4} \Sign(K) - \frac{1}{6}\biggl[\frac{ \frac{d}{dt} V(K) }{ V(K) }\biggr]_{t \mapsto -1}  
  = \frac{1}{4} \Sign(K) + \frac{1}{6} \frac{ d_1(K) }{ d_0(K) }. 
\end{equation}

This identity holds for every knot $K \subset \S^3$, 
and more generally for every link with non-vanishing determinant.
Garoufalidis \cite[Theorem 1.1]{Garoufalidis:1999} generalized Mullins' result 
to all links, using the Casson--Walker--Lescop invariant $\Lambda$ 
constructed by Lescop \cite{Lescop:1996}: 
\begin{equation} \label{eq:Garoufalidis}
  i^{ [ \Sign(K) + \Null(K) ] } \cdot \Lambda(\Sigma^2_K) 
  = \frac{1}{4} d_0(K) \Sign(K) + \frac{1}{6} d_1(K) . 
\end{equation}

% The Casson invariant is of finite type 
% in the sense of Ohtsuki \cite{Ohtsuki:1996}.
% It is the first non-trivial term of the universal finite type invariant 
% constructed by Le, Murakami, and Ohtsuki \cite{LMO:1998}.

If $\det(K) \ne 0$, then $\Sigma^2_K$ is a rational homology sphere;
in this case the Casson--Walker invariant is defined and satisfies 
$\lambda(\Sigma^2_K) \cdot \det(K) = \Lambda(\Sigma^2_K)$,
so that \eqref{eq:Garoufalidis} implies \eqref{eq:Mullins}.

If $\det(K)=1$, then $\Sigma^2_K$ is an integral homology sphere
and $\lambda(\Sigma^2_K) \in \Z$ is Casson's original invariant.
If, moreover, $K$ is a ribbon knot, then $\Sign(K)$ vanishes
and $\lambda(\Sigma^2_K)$ is an even integer because 
it reduces modulo $2$ to the Rohlin invariant and $\Sigma^2_K$ bounds 
a homology $4$--ball, see Casson--Gordon \cite[Lemma 2]{CassonGordon:1986}.
In this case $d_1(K)$ is divisible by $12$.
% (The congruence can perhaps be strengthened to $d_1(K) \in 24 \Z$,
% but this would require a more detailed analysis;
% empirically it seems that $d_1(K) \in 8 \Z$ for all knots.)
No such congruences seem to be known for higher order terms
$d_2, d_3, \dots$, nor for ribbon knots or links in general.

Generalizing work of Sakai, Mizuma has worked out an explicit formula 
for $d_1(K)$ of $1$--fusion ribbon knots $K$ \cite{Mizuma:2005} 
and derived a lower bound for the ribbon number \cite{Mizuma:2006}.

Studying link concordance, Cochran \cite[Corollary 3.10]{Cochran:1985} 
has established similar properties and congruences for the first 
non-vanishing coefficients of the Conway polynomial.

\subsection{How this article is organized}

Theorems \ref{Thm:RibbonJonesNullity} and \ref{Thm:MultiplicativityMod32} 
are pleasant to state but their proofs are somewhat technical:  
we proceed by induction on planar diagrams of immersed surfaces in $\R^3$.
The arguments are elementary but get increasingly entangled.
Generally speaking, these technicalities are due to 
the combinatorial definition of the Jones polynomial 
whereas the ribbon condition is topological in nature.

The article follows the outline given in this introduction.
\fullref{sec:Definitions} recollects some basic definitions and 
highlights motivating analogies; the upper bound % $\Null V(L) \le n-1$ 
of \fullref{Lem:UpperNullityBound} is derived from 
Jones' skein relation by an algebraic argument.  In order to
apply skein relations to ribbon links, \fullref{sec:RibbonLinks} 
recalls the notions of slice and ribbon links, and introduces 
planar band diagrams as a convenient presentation.
\fullref{sec:RibbonJonesNullity} sets up a suitable 
induction technique for the Kauffman bracket and proves 
the lower bound of \fullref{Prop:EulerJonesNullity}.
\fullref{sec:BandCrossingChanges} discusses band crossing changes
and proves \fullref{Prop:FiniteTypeSurfaceInvariants}.
\fullref{sec:JonesDeterminant} establishes multiplicativity 
modulo $32$ as stated in \fullref{Thm:MultiplicativityMod32}.
\fullref{sec:OpenQuestions}, finally, discusses 
possible generalizations and open questions.

\subsection{Acknowledgements}

I thank Christoph Lamm for countless inspiring discussions, 
which sparked off the subject.  I am also indebted to 
Christine Lescop, Thomas Fiedler and Christian Blanchet for 
valuable suggestions and comments on successive versions of this article.
This work was begun in the winter term 2006/2007 
during a sabbatical funded by a research contract
\textit{d\'el\'egation aupr\`es du CNRS}, 
whose support is gratefully acknowledged.

%%%%%%%%%%%%%%%%%%%%%%%%%%%%%%%%%%%%%%%%%%%%%%%%%%%%%%%%%%%%%%%%%%%%%%%%%%%%%

\section{Definitions and first properties} \label{sec:Definitions}

The nullity and the determinant that we introduce and study 
for the Jones polynomial are analogous to the corresponding notions 
of the classical Seifert form.  In order to highlight these intriguing 
analogies most convincingly, we shall review side by side some elementary 
properties of the Alexander--Conway and the Jones polynomial.

As standard references in knot theory we refer to
Burde--Zieschang \cite{BurdeZieschang:1985},
Lickorish \cite{Lickorish:1997} and Rolfsen \cite{Rolfsen:1990}.
Throughout this article we work in the smooth category.

\subsection{The Alexander--Conway polynomial} \label{sub:AlexanderConwayPolynomial}

We denote by $\Z[q^{\pm}]$ the ring of Laurent polynomials 
in the variable $q = q^+$ with inverse $q^{-1} = q^-$.
Its elements will simply be called \emph{polynomials} in $q$. 
%
% As usual, its elements $P = \sum_{k=m}^{n} a_k q^k$, with $m \le n$ in $\Z$,
% will simply be called \emph{polynomials} in $q$. 
% For $z \in \C\minus\{0\}$ the value $P(z) = \sum_{k=m}^{n} a_k z^k$
% obtained by substitution $q \mapsto z$ is also denoted by $P|_{(q \mapsto z)}$.
% 
For every link $L \subset \R^3$ we can construct a Seifert surface $S$ 
spanning $L$, that is, a compact connected oriented surface $S \subset \R^3$ 
such that $L = \partial S$ with induced orientations.
We choose a basis of $H_1(S)$ and denote by $\theta$ and $\theta^*$ 
the associated Seifert matrix and its transpose, respectively;
see Burde--Zieschang \cite[Definition 8.5]{BurdeZieschang:1985},
Lickorish \cite[Definition 6.5]{Lickorish:1997} or Rolfsen \cite[Definition 8A1]{Rolfsen:1990}.
The Alexander--Conway polynomial $\Delta(L) \in \Z[q^{\pm}]$ 
is defined as $\Delta(L) = \det( q^- \, \theta^* - q^+ \, \theta )$.
It does not depend on the choice of $S$ and 
is thus an isotopy invariant of the link $L$.
It is traditionally parametrized by $t = q^2$, but 
we prefer the variable $q = -t^{\onehalf}$ in order to 
avoid square roots and to fix signs.

We denote by $\Links$ the set of isotopy classes of oriented links $L \subset \R^3$.
The map $\Delta \co \Links \to \Z[q^{\pm}]$ is characterized by Conway's skein relation 
\[
\newcommand{\skein}[1]{\Delta{\bigl(\raisebox{-0.9ex}{\includegraphics[height=3ex]{#1}}\bigr)}}
\skein{skein+} - \skein{skein-} = ( q^+ - q^- ) \skein{skein0} 
\]
with the initial value $\Delta(\bigcirc) = 1$.
The skein relation entails that $\Delta(L \sqcup \bigcirc) = 0$.

% If $L$ is an $n$--component link, then $\Delta(L)$ is even for $n$ odd, 
% and $\Delta(L)$ is odd for $n$ even.  For the mirror image we thus obtain 
% $\Delta(L^*) = \Delta(L)|_{(q \mapsto q^{-1})} = (-1)^{n+1} \Delta(L)$.

% It is often said that the Alexander--Conway polynomial is insensitive
% to mirror images.  While this is true for knots or links with
% an odd number of components, it is certainly not true for links 
% with an even number of components.  The Alexander--Conway polynomial 
% distinguishes, for example, the two Hopf links.  For this conclusion,
% of course, we have to carefully define the sign and ensure invariance.

The (signed) \emph{determinant} of a link is defined as 
\[
\det(L) := \Delta(L)|_{(q \mapsto i)} = \det[ -i (\theta+\theta^*) ].
\]
Most authors consider the determinant $\det(\theta+\theta^*)$, 
but then only its absolute value $|\det(\theta+\theta^*)|$ is invariant;
see Burde--Zieschang \cite[Corollary 13.29]{BurdeZieschang:1985}, 
Lickorish \cite[p.\,90]{Lickorish:1997}, Rolfsen \cite[Definition 8D4]{Rolfsen:1990}.
In our normalization the determinant is the unique invariant $\det \co \Links \to \Z[i]$ 
that satisfies $\det(\bigcirc) = 1$ and the skein relation
\[
\newcommand{\skein}[1]{\det{\bigl(\raisebox{-0.9ex}{\includegraphics[height=3ex]{#1}}\bigr)}}
\skein{skein+} - \skein{skein-} = 2i \skein{skein0} .
\]

\subsection{Signature and nullity} \label{sub:SignatureNullity}

Murasugi \cite{Murasugi:1965} showed that 
the \emph{signature} $\Sign(L) := \Sign(\theta+\theta^*)$ 
and the \emph{nullity} $\Null(L) := \Null(\theta+\theta^*)$ 
are invariants of the link $L$.
Tristram \cite{Tristram:1969} generalized this by passing
from the symmetric matrix $\theta+\theta^*$ to the hermitian matrix 
\[
M_\omega := (1-\omega^2)\theta + (1-\bar\omega^2)\theta^*
= (\omega-\bar\omega)(\bar\omega\theta^*-\omega\theta) 
\]
with $\omega \in \S^1 \minus \{\pm 1\}$. % $\omega \in \C \minus \R$.
He showed that the \emph{generalized signature} $\Sign_\omega L := \Sign M_\omega$ 
and the \emph{generalized nullity} $\Null_\omega L := \Null M_\omega$
are again link invariants.  Independently, 
Levine \cite{Levine:1969} defined the same invariants for knots.
For $\omega = \pm i$ this specializes to Murasugi's invariants.
For higher dimensions see Erle \cite{Erle:1969} and Milnor \cite{Milnor:1968}.

\begin{remark} \label{rem:Nullity}
  For every knot $K$ we have $\det(K) \equiv 1 \mod{4}$, 
  whence $\det(K) \ne 0$ and $\Null(K) = 0$.  
  More generally, let $N \in \N$ be a prime number 
  and let $\omega$ be a primitive $2N$th root of unity.
  Tristram \cite[Lemma 2.5]{Tristram:1969} 
  remarked that the generalized determinant
  \[
  \textstyle\det_\omega (L) := \Delta(L)|_{(q \mapsto \omega)}
  = \det( \bar\omega \theta^* - \omega \theta )
  \]
  never vanishes for a knot. % for $\Delta \in \Z[q^\pm]$, 
  % the conditions $\Delta(q^{-1}) = \Delta(q)$ and $\Delta(1) = 1$ 
  % imply $\U \nmid \Delta$ and $\Delta(\omega) \ne 0$.
  More generally, he proved that $0 \le \Null_\omega(L) \le n-1$ 
  for every link $L$ with $n$ components \cite[Corollary 2.24]{Tristram:1969}.
  We shall see below that the same technique applies to the Jones polynomial.

  For the matrix $M := q^- \theta^* - q^+ \theta$ over $\Z[q^{\pm}] \subset \Q(q)$,
  the nullity $\Null M$ is the dimension of its null-space. % over $\Q(q)$.
  % (The signature, however, is only defined for hermitian matrices 
  % over $\C$ and not over $\Z[q^{\pm}]$.)
  We have $\Null M \le \Null M_\omega$ for all $\omega \in \S^1$,
  and equality holds for all but finitely many values of $\omega$.
  In particular we see that $0 \le \Null M \le n-1$.
\end{remark}

\subsection{The Jones polynomial} \label{sub:JonesPolynomial}

% Let $\Links$ be the set of isotopy classes of oriented links in $\R^3$.
The following theorem is due to Alexander \cite{Alexander:1928} and 
Conway \cite{Conway:1969} for $N=0$, Jones \cite{Jones:1985} for $N=2$, 
and HOMFLYPT % \cite{HOMFLY:1986,PrzytyckiTraczyk:1988} 
(Freyd--Yetter--Hoste--Lickorish--Millett--Ocneanu \cite{HOMFLY:1986},
Przytycki--Traczyk \cite{PrzytyckiTraczyk:1988})
for the general case $N \in \N$.

\begin{theorem} 
  \label{thm:HOMFLYPT}
  For each $N \in \N$ there exists a unique link invariant 
  $\V \co \Links \to \Z[q^{\pm}]$ mapping the trivial knot
  to $\V(\bigcirc) = 1$ and satisfying the following skein relation:
  \begin{equation}
    \label{eq:HOMFLYPT}
    \newcommand{\skein}[1]{\V{\bigl(\raisebox{-0.9ex}{\includegraphics[height=3ex]{#1}}\bigr)}}
    q^{-N} \skein{skein+} - q^{+N} \skein{skein-} = (q^{-1}-q^{+1}) \skein{skein0} .
  \end{equation}
\end{theorem}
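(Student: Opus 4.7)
The plan is to prove uniqueness and existence separately, along classical lines.

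For uniqueness I would argue by induction on diagrams in the manner of Conway. Given an oriented diagram $D$ of a link, choose a basepoint on each component and an order in which to traverse them; call a crossing of $D$ \emph{bad} if its under-strand is met before its over-strand during this traversal, and let $c(D)$ and $b(D)$ denote the total number of crossings and the number of bad crossings, respectively. Inducting lexicographically on the pair $(c(D), b(D))$, the skein relation \eqref{eq:HOMFLYPT} allows one to switch any bad crossing: the switched-crossing term has strictly smaller $b$, while the smoothed term has strictly smaller $c$. When $b(D) = 0$ the diagram is descending and therefore represents a trivial link, so any presumed invariant $V_N$ takes the same value on $D$ as on the standard trivial diagram $\bigcirc^n$; applying \eqref{eq:HOMFLYPT} at a Reidemeister~I kink on an unknot then forces $V_N(\bigcirc^n) = [N]^{n-1}$ with $[N] := (q^N - q^{-N})/(q - q^{-1})$, which supplies the base case. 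This shows at most one function $\Links \to \Z[q^{\pm}]$ can obey the axioms.

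For existence one must honestly construct an invariant. In the case $N=2$ --- the only one required throughout the rest of this paper --- I would use the Kauffman bracket $\bracket{\cdot}$, defined on unoriented diagrams by $\bracket{\bigcirc} = 1$, $\bracket{D \sqcup \bigcirc} = -(q^2+q^{-2}) \bracket{D}$, and a local state-sum expansion at every crossing. A short case analysis establishes invariance under Reidemeister~II and~III, and the writhe-normalized expression $(-q)^{-3w(D)} \bracket{D}$ absorbs the Reidemeister~I discrepancy; setting $V_2(L)$ equal to this polynomial yields an invariant that, by direct computation on the three local diagrams entering \eqref{eq:HOMFLYPT}, satisfies the required skein relation for $N=2$. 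For general $N$ the cleanest route is through the Ocneanu trace on the Iwahori--Hecke algebra $H_n(q^2)$: by Alexander's theorem every link is the closure of some braid $\beta \in B_n$, and the appropriately rescaled Ocneanu trace of the image of $\beta$ depends only on its Markov-equivalence class, hence descends to an invariant of $L$; the skein relation then becomes an algebraic identity on a Hecke algebra generator.

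The main obstacle lies in the invariance check in the existence step. In the Kauffman approach one has to expand the state sum on the diagrams entering Reidemeister~III and match the outcomes --- essentially Jones's original computation. In the Hecke-algebra approach one must invoke both Alexander's and Markov's theorems and additionally construct the Ocneanu trace, a genuinely algebraic enterprise. Since the present paper uses only $N=2$, I would write out the Kauffman bracket argument in full and refer to \cite{HOMFLY:1986,PrzytyckiTraczyk:1988} for the complete HOMFLYPT theorem.
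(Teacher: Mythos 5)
The paper does not prove this theorem: it is quoted as a known result, attributed to Alexander and Conway for $N=0$, Jones for $N=2$, and the HOMFLYPT/Przytycki--Traczyk papers in general, so there is no internal argument to compare yours against. Your outline is the standard one from that literature and is sound: the lexicographic induction on (crossing number, number of bad crossings) correctly proves \emph{uniqueness} only, since a descending diagram represents a trivial link and the value $V_N(\bigcirc^n)=[N]^{n-1}$ is forced by applying the skein relation at a kink; and you rightly defer \emph{existence} to an honest construction (Kauffman bracket for $N=2$, Ocneanu trace on the Hecke algebra in general), which is where the real work lies. This division of labour matters because the naive attempt to use the same induction for existence founders on well-definedness, and you have avoided that trap. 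Your $N=2$ construction is moreover exactly the one the paper itself uses later (Section on the Kauffman bracket), though you should align normalizations: the paper works with $\bracket{D\sqcup\bigcirc}=(-A^{2}-A^{-2})\bracket{D}$ and $V(L)=(-A^{-3})^{w(D)}\bracket{D}$ under the substitution $q=-A^{-2}$, so the variable in your writhe correction $(-q)^{-3w(D)}$ is the paper's $A$, not its $q$; this is only a change of variables, but it should be stated explicitly to avoid a sign error at $q=i$, which is the evaluation point the whole paper depends on.
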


The case $N=0$ reproduces the Alexander--Conway polynomial, 
$V_0(L) = (-1)^{n-1} \Delta(L)$ where $n$ is the number of components.
The choice $N=1$ yields the trivial invariant, $\V[1](L) = 1$ for all $L \in \Links$.
The case $N=2$ yields the Jones polynomial \cite{Jones:1985}, traditionally 
parametrized by $t = q^2$ with the sign convention $q = -t^{\onehalf}$
(see \sref{sub:KauffmanBracket}).
% For $N \ge 3$ we will also call $\V$ the (generalized) Jones polynomial.

\begin{remark}
  It follows from these axioms that $\V(L \sqcup \bigcirc) = \V(L) \cdot \U$ with 
  \[
  \U = \frac{q^{-N}-q^{+N}}{q^{-1}-q^{+1}} .
  \]
  We have $\U[0] = 0$ and $\U[1] = 1$, while 
  for $N \ge 2$ we obtain the expansion
  \[
  \U = q^{-N+1} + q^{-N+3} + \dots + q^{N-3} + q^{N-1} .
  \]
  This is sometimes called the \emph{quantum integer} $[N]_q$. 
  For $q \mapsto 1$ we get $[N]_{(q \mapsto 1)} = N$. 

  For the trivial $n$--component link we have $\V(\bigcirc^n) = \U^{n-1}$,
  and for every $n$--component link $L$ we obtain
  \[
  \V(L)|_{(q \mapsto 1)} = \V(\bigcirc^n)|_{(q \mapsto 1)} = N^{n-1} .
  \]
  Finally, we observe the following symmetry 
  with respect to the automorphism $q \mapsto -q$,
  which corresponds to the non-trivial Galois automorphism
  of $\Z[t^{\pm\onehalf}]$ over $\Z[t^\pm]$:
  \[
  \V(L)|_{(q \mapsto -q)} = (-1)^{(N-1)(n-1)} \V(L) .
  \]
  If $N$ is odd, then $\V(L)$ is even, 
  that is, invariant under the automorphism $q \mapsto -q$.
  \linebreak[3]
  If $N$ is even, then $\V(L)|_{(q \mapsto -q)} = (-1)^{n-1} \V(L)$
  depends on the parity of $n$.
\end{remark}

\subsection{An upper bound for the Jones nullity} \label{sub:NullityUpperBound}

We are now ready to prove Lemma \ref{Lem:UpperNullityBound}.
The idea is to adapt Tristram's observation 
\cite[Lemma 2.5]{Tristram:1969} to the Jones polynomial.

\begin{definition}
  The \emph{nullity} $\Null_z P = \nu$ of a Laurent polynomial $P \in \C[q^\pm]$ 
  at some point $z \in \C\minus\{0\}$ is the multiplicity $\nu$ of the root at $q = z$.
\end{definition}

More explicitly, we have $P = (q-z)^\nu \cdot Q$ 
such that $\nu \ge 0$ and $Q \in \C[q^\pm]$ satisfies $Q(z) \ne 0$.
Alternatively, $\nu$ is the least integer such that the % $\nu$th 
derivative $P^{(\nu)} = \smash{\frac{d^v}{dq^v}} P$ does not vanish in $z$.
It is also the smallest index such that $d_\nu \ne 0$ in the power series 
expansion $P(q) = \sum_{k=0}^\infty d_k h^k$ at $q = z \exp(h/2)$. 

The polynomial $\U \in \Z[q^\pm]$ of degree $2N-2$ vanishes at 
every $2N$th root of unity $\omega$ other than $\pm 1$, so that $\Null_\omega \U = 1$.  
% Consequently, for $\V(\bigcirc^n) = \U^{n-1}$ we find $\Null_\omega \U^{n-1} = n-1$.
We fix a primitive $2N$th root of unity, $\omega = \exp( i \pi k/N )$,
by specifying an integer $k$ such that $0 < k < 2N$ and $\gcd(k,2N)=1$.

\begin{proposition}
  Let $L \subset \R^3$ be a link.  
  If $N$ is prime, then we have the factorization $\V(L) = \U^\nu \cdot \rV(L)$ 
  with $\nu = \Null_\omega \V(L)$ and $\rV(L) \in \Z[q^\pm]$ such that $\U \nmid \rV(L)$.
\end{proposition}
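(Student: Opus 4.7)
The plan is to identify the irreducible factorization of $\U$ in $\mathbb{Q}[q^\pm]$ and then use Galois theory together with the parity symmetry $q \mapsto -q$ to show that every irreducible factor of $\U$ divides $\V(L)$ to the same order $\nu$.

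First I would rewrite
\[
q^{N-1}\U \;=\; 1 + q^2 + q^4 + \cdots + q^{2N-2} \;=\; \frac{q^{2N}-1}{q^2-1} \;=\; \prod_{d\mid 2N,\; d\neq 1,2}\Phi_d(q),
\]
where $\Phi_d$ is the $d$-th cyclotomic polynomial. The primality of $N$ collapses this product: for $N=2$ it reduces to the single factor $\Phi_4 = q^2+1$, and for $N$ an odd prime to the two factors $\Phi_N$ and $\Phi_{2N}$, corresponding respectively to the primitive $N$-th and primitive $2N$-th roots of unity. Since $q$ is a unit in $\Z[q^\pm]$, this is, up to a unit, the full irreducible factorization of $\U$ in $\mathbb{Q}[q^\pm]$.

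Next I would pin down the $\Phi_{2N}$-adic valuation of $\V(L)$. Because $\V(L) \in \Z[q^\pm] \subset \mathbb{Q}[q^\pm]$, the Galois group of $\mathbb{Q}(\omega)/\mathbb{Q}$ permutes the roots of $\V(L)$ preserving multiplicities, so $\Phi_{2N}^\nu$ divides $\V(L)$ and no higher power does. In the case $N=2$ this already yields $\U^\nu \mid \V(L)$. For $N$ an odd prime I would invoke the parity symmetry $\V(L)|_{(q\mapsto -q)} = (-1)^{(N-1)(n-1)}\V(L)$ recalled in the preceding remark: substituting $q \mapsto -q$ in $(q-\omega)^\nu \mid \V(L)(q)$ gives $(q+\omega)^\nu \mid \V(L)(q)$, and a short order calculation shows that $-\omega$ is a primitive $N$-th root of unity (since $\omega^N=-1$ when $N$ is odd). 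Galois symmetry then upgrades this to $\Phi_N^\nu \mid \V(L)$, and since $\Phi_N$ and $\Phi_{2N}$ are distinct irreducibles we conclude $\U^\nu \mid \V(L)$ in $\mathbb{Q}[q^\pm]$. To descend to $\Z[q^\pm]$ I would use that $q^{N-1}\U\in\Z[q]$ is monic with content $1$, so Gauss's lemma (equivalently, Euclidean division by this monic integer polynomial) forces $\rV(L) := \V(L)/\U^\nu$ to lie in $\Z[q^\pm]$. Maximality $\U\nmid\rV(L)$ is automatic: otherwise $\omega$ would be a root of $\V(L)$ of multiplicity at least $\nu+1$, contradicting $\nu = \Null_\omega \V(L)$.

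The delicate step, and the reason the hypothesis that $N$ is prime is essential, is the bridge between the multiplicities of $\Phi_N$ and $\Phi_{2N}$. Galois symmetry controls each orbit separately, and the parity symmetry $q\mapsto -q$ supplies precisely the link between primitive $2N$-th and primitive $N$-th roots when $N$ is odd (the case $N=2$ being trivial, with only one cyclotomic factor in $\U$). For composite $N$ the polynomial $\U$ acquires additional cyclotomic factors $\Phi_d$ whose roots are not related to $\omega$ under this symmetry, so the argument genuinely breaks down and a different ingredient would be needed.
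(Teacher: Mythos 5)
Your proposal is correct and follows essentially the same route as the paper: both rest on the factorization $q^{N-1}\U = \Phi_N\Phi_{2N}$ for $N$ an odd prime (resp.\ $\Phi_4 = q\U[2]$ for $N=2$) and use the parity symmetry $\V(L)|_{(q\mapsto -q)} = \pm\V(L)$ to transfer the multiplicity at $\omega$ to the primitive $N$th root $-\omega$, thereby linking the two cyclotomic factors. The paper phrases this via minimal polynomials and iterated division of even integer polynomials by the monic even polynomial $q^{N-1}\U$, which is just a repackaging of your Galois-orbit and Gauss's-lemma argument.
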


\begin{proof}
  If $N=2$, then $\omega = \pm i$, whence $q^2 + 1 = q \U[2]$ 
  is the minimal polynomial of $\omega$ in $\Q[q]$.
  % This polynomial has integral coefficients and leading coefficient equal to $1$.
  For each $P \in \Z[q^\pm]$, the condition $P(\omega) = 0$ 
  is equivalent to $P = \U[2] \cdot Q$ for some $Q \in \Z[q^\pm]$.
  Iterating this argument, we obtain $P = \U[2]^\nu \cdot Q$ 
  with $Q \in \Z[q^\pm]$ such that $Q(\omega) \ne 0$, 
  whence $\Null_\omega P = \nu$.

  If $N$ is odd, then $\omega$ is of order $2N$ 
  and $-\omega$ is of order $N$ in the multiplicative group $\C^\times$.  
  Their minimal polynomials in $\Q[q]$ are the cyclotomic polynomials
  \cite[\textsection VI.3]{Lang:2002}
  \[
  \Phi_{2N} = \prod_{\begin{smallmatrix} \xi \in \C^\times \\ \ord(\xi)=2N \end{smallmatrix}} (q-\xi)
  \qquad\text{and}\qquad
  \Phi_{N} = \prod_{\begin{smallmatrix} \xi \in \C^\times \\ \ord(\xi)=N \end{smallmatrix}} (q-\xi) .
  \]
  
  If moreover $N$ is prime, then all $2N$th roots of unity 
  are either of order $1$, $2$, $N$, or $2N$ and thus 
  $q^{2N}-1 = (q-1) (q+1) \Phi_{N} \Phi_{2N}$.
  This implies that 
  \[
  \Phi_{N} \cdot \Phi_{2N} = q^{2N-2} + q^{2N-4} + \dots + q^2 + 1 = q^{N-1} \U .
  \]
  This polynomial is even, has integer coefficients and leading coefficient $1$. 
  As a consequence, if $P \in \Z[q^\pm]$ is even, then $P(\omega)=0$ is 
  equivalent to $P = \U \cdot Q$ for some $Q \in \Z[q^\pm]$, and $Q$ is again even.
  Iterating this argument, we obtain $P = \U^\nu \cdot Q$ 
  with $Q \in \Z[q^\pm]$ even and $Q(\omega) \ne 0$, whence $\Null_\omega P = \nu$.
\end{proof}

\begin{corollary} \label{cor:NullityBound}
  Let $N$ be a prime and let $\omega \ne \pm 1$ be a $2N$th root of unity.
  Then the nullity $\Null_\omega \V(L)$ only depends on $N$ 
  and will thus be denoted by $\Null \V(L)$.
  For every link $L$ with $n$ components we have 
  the inequality $0 \le \Null \V(L) \le n-1$.
\end{corollary}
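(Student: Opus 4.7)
The plan is to deduce the Corollary directly from the factorization $\V(L) = \U^\nu \cdot \rV(L)$ with $\U \nmid \rV(L)$ given by the preceding Proposition. Two things need to be checked: that the exponent $\nu$ is intrinsic, i.e.\ does not depend on the particular primitive $2N$th root of unity used to define $\Null_\omega$, and that $0 \le \nu \le n-1$.

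For the first point, I would first recall that $\U$ has as its roots exactly the $2N$th roots of unity other than $\pm 1$, each with multiplicity one. This follows from the identity $q^{2N}-1 = (q-1)(q+1)\Phi_N\Phi_{2N}$ combined with $\Phi_N\Phi_{2N} = q^{N-1}\U$ from the proof of the Proposition (for odd prime $N$; for $N=2$ it is immediate from $\U[2] = q + q^{-1}$ having simple roots $\pm i$). Now let $\omega'$ be any primitive $2N$th root of unity. Then $\U(\omega') = 0$, and I claim $\rV(L)(\omega') \neq 0$: otherwise the cyclotomic divisibility step from the proof of the Proposition, applied at $\omega'$ (using evenness of $\rV(L)$ when $N$ is odd, or the direct argument for $N=2$), would extract a further factor of $\U$, contradicting $\U \nmid \rV(L)$. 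Hence the multiplicity of $\omega'$ as a root of $\V(L) = \U^\nu \cdot \rV(L)$ is exactly $\nu$, so $\Null_{\omega'} \V(L) = \nu$ independently of the choice of $\omega'$, justifying the notation $\Null \V(L) := \nu$.

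For the bounds, $\Null \V(L) \ge 0$ holds by definition of multiplicity. For the upper bound I would evaluate the factorization at $q = 1$: using $\V(L)|_{(q \mapsto 1)} = N^{n-1}$ and $\U|_{(q \mapsto 1)} = N$, both recorded in the earlier Remark, I obtain the equality
\[
N^{n-1} = N^\nu \cdot \rV(L)|_{(q \mapsto 1)}
\]
in $\Z$. Since the left-hand side is nonzero and $\rV(L)|_{(q \mapsto 1)}$ is an integer, this factor is a nonzero integer, and unique factorization in $\Z$ together with $N \ge 2$ prime forces $\nu \le n-1$.

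I do not expect any real obstacle; the substantive work has been done in the preceding Proposition. The one mild subtlety is confirming that $\rV(L)(\omega') \neq 0$ at every primitive $2N$th root of unity, not just the specific $\omega$ chosen to define $\nu$, which is why I would explicitly reinvoke the cyclotomic divisibility lemma from the Proposition's proof rather than hand-wave it.
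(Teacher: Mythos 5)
Your proof is correct and follows essentially the same route as the paper: the paper's own argument is precisely the evaluation of the factorization $\V(L) = \U^\nu\cdot\rV(L)$ at $q=1$, giving $N^{n-1} = N^\nu\cdot\rV(L)|_{(q\mapsto 1)}$ and hence $\nu\le n-1$. The independence of $\Null_\omega\V(L)$ from the choice of $\omega$, which you spell out via $\rV(L)(\omega')\ne 0$, is left implicit in the paper but is exactly the content of the preceding Proposition's factorization being intrinsic, so your added detail is a faithful elaboration rather than a different method.
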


\begin{proof}
  We have $\V(L) = \U^\nu \cdot \rV(L)$ with $\nu = \Null_\omega \V(L)$
  and $\rV(L) \in \Z[q^\pm]$. % such that $\U \nmid \rV$.
  Evaluating at $q=1$, we find $N^{n-1} = N^\nu \cdot \rV(L)|_{(q \mapsto 1)}$,
  whence $\nu \le n-1$.  
\end{proof}

\begin{definition}
  In the notation of the previous proposition,
  we call $\rV(L)$ the \emph{reduced Jones polynomial} 
  and $\det_\omega \V(L) := \rV(L)|_{(q \mapsto \omega)}$
  the \emph{Jones determinant} of $L$ at $\omega$.
  It depends on the chosen root of unity $\omega$ up to 
  a Galois automorphism of the ring $\Z[\omega]$.
\end{definition}

\begin{remark}
  The family of invariants $\V$ with $N \in \N$ can be encoded by 
  the \textsc{Homflypt} polynomial $P \co \Links \to \Z(q,\ell)$
  defined by $P(\bigcirc)=1$ and the skein relation
  \[
  \newcommand{\skein}[1]{P{\bigl(\raisebox{-0.9ex}{\includegraphics[height=3ex]{#1}}\bigr)}}
  \ell^- \skein{skein+} - \ell^+ \skein{skein-} = (q^- - q^+) \skein{skein0} .
  \]
  This implies that $P(L\sqcup\bigcirc) = P(L) \cdot U$ 
  with $U = \frac{\ell^- - \ell^+}{q^- - q^+}$.
  Moreover, $P$ takes values in the subring $R := \Z[q^\pm,\ell^\pm,U]$
  and is invariant under the ring automorphism $(\ell \mapsto -\ell, q \mapsto -q)$.

  % We obtain $V_N$ by substituting $\ell \mapsto q^N$.
  By construction, the following diagram is commutative:
  \[
  \begin{CD}
    \text{\textsc{Homflypt}} @. 
    \Z[ q^\pm, \ell^\pm, U ] @>{\ell \mapsto q^N}>> \Z[q^\pm] 
    @. \text{Jones}
    \\
    @. @V{\ell \mapsto -1}VV @VV{q \mapsto \omega}V @. 
    \\
    \text{Alexander--Conway} @.
    \Z[q^\pm] @>>{q \mapsto \omega}> \C 
    @. \text{determinant}
  \end{CD}
  \]

  For every link $L$ we have a unique factorization $P(L) = U^\nu \cdot Q$ 
  with $\nu \ge 0$ and $Q \in R$ satisfying $Q|_{\ell = \pm 1} \ne 0$.
  We call $\Null P(L) := \nu$ the \emph{nullity} of the \textsc{Homflypt} polynomial.
  It satisfies the inequality $\Null P(L) \le \Null \V(L)$ for all $N \in \N$, 
  and equality holds for all but finitely many values of $N$.
  In particular $0 \le \Null P(L) \le n-1$.
\end{remark}

%%%%%%%%%%%%%%%%%%%%%%%%%%%%%%%%%%%%%%%%%%%%%%%%%%%%%%%%%%%%%%%%%%%%%%%%%%%%%

\section{Band diagrams for ribbon links} \label{sec:RibbonLinks}

\subsection{Band diagrams} \label{sub:BandDiagrams}

We wish to apply skein relations to ribbon links.
To this end we shall use planar \emph{band diagrams} 
built up from the pieces shown in \fullref{fig:BandPieces}.

\begin{figure}[h]
  \centering
  \subfigure[ends, strip, twists]{\quad\includegraphics[scale=0.9]{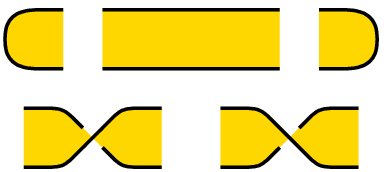}\quad}
  \subfigure[band junction]{\qquad\includegraphics[scale=0.9]{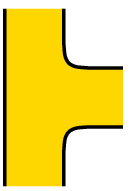}\quad}
  \subfigure[band crossing]{\quad\includegraphics[scale=0.9]{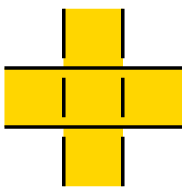}\quad}
  \subfigure[ribbon singularity]{\qquad\includegraphics[scale=0.9]{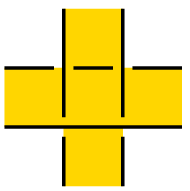}\qquad}
  \caption{Elementary pieces of band diagrams}
  \label{fig:BandPieces}
\end{figure}

Such a diagram encodes not only a link $L \subset \R^3$ 
but also an immersed surface $S \subset \R^3$ 
with boundary $\partial S = L$.  More explicitly:

\begin{definition} \label{def:ImmersedBandSurface}
  Let $\Sigma$ be a smooth compact surface
  with boundary $\partial\Sigma \ne \emptyset$.
  We do not require $S$ to be orientable nor connected, 
  but we will assume that $S$ does not have any closed components.  
  A smooth immersion $f \co \Sigma \looparrowright \R^3$ 
  is called (immersed) \emph{ribbon surface} if its only singularities 
  are ribbon singularities according to the local model shown in \fullref{fig:BandPieces}d.  
  \fullref{fig:RibbonExamples}a displays a more three-dimensional view:
  every component of self-intersection is an arc $A$ 
  so that its preimage $f^{-1}(A)$ consists of two arcs 
  in $\Sigma$, one of which is interior.
\end{definition}

\begin{figure}[h]
  \centering
  \subfigure[a ribbon singularity]{\quad\includegraphics[height=14ex]{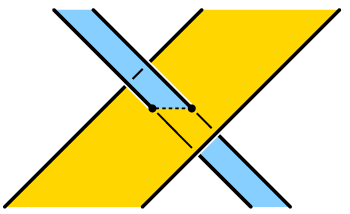}\quad}
  \subfigure[a ribbon link $L$]{\quad\includegraphics[height=14ex]{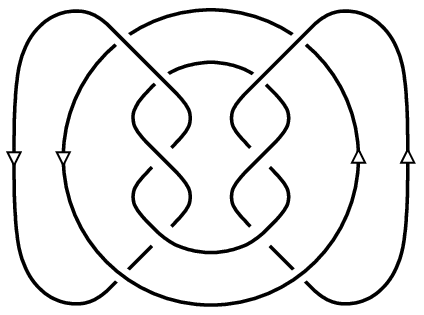}\quad}
  \subfigure[ribbon disks for $L$]{\quad\includegraphics[height=14ex]{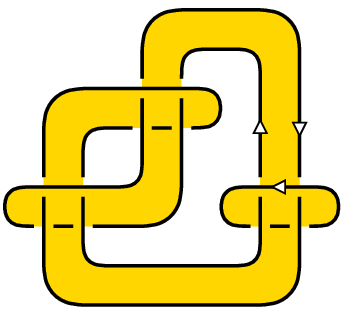}\quad}
  \caption{Ribbon links and immersed ribbon surfaces}
  \label{fig:RibbonExamples}
\end{figure}

\begin{notation}
  We regard $f$ only up to diffeomorphisms of $\Sigma$, 
  and can thus identify the immersion $f$ with its image $S = f(\Sigma)$.
  The \emph{Euler characteristic} $\chi(S)$ of the immersed surface $S$ 
  is  by definition the Euler characteristic of the abstract surface $\Sigma$.
  A \emph{component} of $S$ is the image of a component of $\Sigma$.

  A ribbon singularity is called \emph{mixed} if it involves 
  two distinct surface components.  Otherwise, if the surface 
  component pierces itself, the ribbon singularity is called \emph{pure}.

  We write $S = S_1 \sqcup\dots\sqcup S_n$ if the components 
  $S_1,\dots,S_n$ are contained in disjoint balls in $\R^3$.  
  We also use the analogous notation $L = L_1 \sqcup\dots\sqcup L_n$ for links.
\end{notation}

Since each surface component $S_k$ has non-empty boundary, it satisfies $\chi(S_k) \le 1$.
As a consequence, if a link $L$ has $n$ components, then every 
ribbon surface $S$ spanning $L$ satisfies $\chi(S) \le n$.
The maximum is attained precisely for ribbon links:

\begin{definition}
  An $n$--component link $L \subset \R^3$ 
  is called a \emph{ribbon link} if it bounds 
  a ribbon surface $S \subset \R^3$ consisting of $n$ disks.
  (\fullref{fig:RibbonExamples} shows an example.)
\end{definition}

\begin{proposition}
  For every band diagram $D$ there exists a ribbon surface 
  $S \subset \R^3$ such that the standard projection $\R^3 \to \R^2$ 
  maps $S$ to $D$ in the obvious way.  

  Any two ribbon surfaces projecting to $D$ are ambient isotopic in $\R^3$.  
  Modulo ambient isotopy we can thus speak of \emph{the} surface realizing $D$,

  Every ribbon surface $S \subset \R^3$ can be represented 
  by a band diagram $D$, that is, $S$ is ambient
  isotopic to a surface $S'$ projecting to $D$.
\end{proposition}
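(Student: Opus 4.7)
The plan is to treat the three assertions in order, using standard local models and a general--position argument. For existence, each elementary piece of \fullref{fig:BandPieces} comes with an obvious local lift to $\R^3$: strips, ends, half-twists and band junctions lift to embedded pieces of surface, band crossings lift to two disjoint sheets with the specified over/under arrangement, and ribbon singularities lift to the self-intersection model of \fullref{fig:RibbonExamples}a. Given a band diagram $D$, I would assemble $S$ by taking such a local lift over a neighbourhood of each elementary piece and gluing along the matching boundary arcs; smoothness of the result is automatic once the lifts are chosen to agree to first order on the overlaps.

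For uniqueness, suppose $S$ and $S'$ both project to $D$. Over each elementary piece the preimage of the vertical projection $\pi \co \R^3 \to \R^2$ is essentially forced: a graph over the strip in the case of strips, ends, half-twists and junctions; two disjoint graphs with prescribed heights at a band crossing; and the local ribbon model of \fullref{def:ImmersedBandSurface} at a ribbon singularity. Each of these local lifts is unique up to an ambient isotopy supported in a small vertical box over the corresponding piece. Patching these local isotopies together, using the fact that they can be arranged to agree on the short gluing segments that join adjacent pieces, produces a global ambient isotopy of $\R^3$ carrying $S$ to $S'$.

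The substantive part of the proposition is representability, and this is where I expect the main work. Starting from a smooth ribbon immersion $f \co \Sigma \looparrowright \R^3$, the plan is to apply a generic $C^\infty$--small ambient isotopy so that the composite $\pi \circ f$ becomes a generic map to $\R^2$: its critical set is a union of fold arcs and isolated cusps, its multiple points in $\R^2$ are transverse double points with no triple points, and the self-intersection arcs of $f$ are in general position with respect to the fold set. The ribbon condition then guarantees that every transverse double point of $\pi \circ f$ is either a band crossing (two disjoint sheets in $\R^3$, whose relative heights record the over/under information) or the transverse projection of a ribbon singularity. The fold arcs decompose $\Sigma$ into strip-like pieces; cusps become band ends, a strip joining two folds with opposite co-orientations produces a half-twist, and vertices where several strip-like pieces meet become band junctions. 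The resulting combinatorial datum on $\R^2$ is by construction a band diagram $D$ representing $S$. The main technical difficulty lies in ensuring, by successive generic perturbations, that fold arcs, cusps and self-intersection arcs are all simultaneously in general position with respect to $\pi$, which is a routine but delicate application of multi-jet transversality.
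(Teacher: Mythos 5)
Your handling of the first two assertions (lifting a band diagram to a surface, and uniqueness of the lift up to ambient isotopy) is reasonable, and the paper does not prove these either. The problem is with the third assertion, where your strategy has a genuine gap. A band diagram is not the generic apparent contour of a surface in $\R^3$: it is the projection of a surface that has already been decomposed into narrow, essentially flat bands, so that its image in $\R^2$ is a thickened graph assembled from the elementary pieces. A generic projection $\pi\circ f$ of an arbitrary ribbon immersion does not have this form. For instance, a flat round disk has empty fold set and projects onto a two--dimensional region; more generally the complement of the fold arcs in $\Sigma$ consists of open two--dimensional domains on which $\pi\circ f$ is an immersion, not ``strip-like pieces''. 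So the fold/cusp/multi-jet transversality analysis, however carefully carried out, does not by itself yield a decomposition of $\Sigma$ into bands, and hence does not produce a band diagram.

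The missing step is combinatorial rather than singularity-theoretic, and it is exactly what the paper's sketch supplies: one first cuts $S$ along properly embedded arcs running from boundary to boundary until only pieces homeomorphic to disks remain. This uses the standing hypothesis that $S$ has no closed components (every component has boundary, so such a system of arcs exists), and the cutting arcs must be chosen in general position with respect to the ribbon singularity arcs so that each singularity ends up in the local model of a band piercing a band. The disk pieces are then placed disjointly and flatly in the plane and reglued by bands according to the cutting pattern; it is this regluing that creates the junctions, band crossings, twists and ribbon singularities of the diagram. Your general-position machinery could serve to verify that the regluing bands can be realized by the listed elementary pieces, but it cannot substitute for the choice of a band decomposition of the abstract surface $\Sigma$, which is the actual content of the proof.
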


\begin{proof} 
  We only sketch the last assertion:
  existence of a band diagram for every ribbon surface $S$.
  The idea is to cut $S$ along properly embedded arcs running from boundary to boundary.
  This is possible under our hypothesis that $S$ has no closed components.
  We repeat this process so as to obtain trivial pieces homeomorphic to disks.  

  \begin{figure}[hbtp]
    \centering
    \includegraphics[height=14ex]{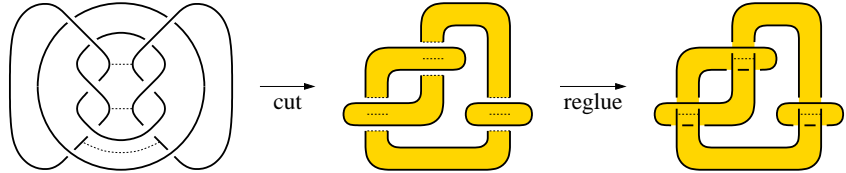}
    \caption{Cutting and regluing a ribbon surface}
    \label{fig:RibbonCutAndPaste}
  \end{figure}
  
  These disks can be put disjointly into the plane and then reglued as prescribed;
  \fullref{fig:RibbonCutAndPaste} illustrates an example.
  Regluing typically introduces junctions, band crossings, twists 
  and ribbon singularities; these suffice to achieve the reconstruction.
\end{proof}

\begin{remark}
  There is an analogue of Reidemeister's theorem,
  representing ambient isotopy of ribbon surfaces $S \subset \R^3$
  by a generating set of local moves on band diagrams $D \subset \R^2$.
  The local moves are straightforward but lengthy to enumerate,
  and we shall not need this more precise result here.
  The general philosophy is that of \emph{links with extra structure}, 
  in our case links with a ribbon surface. % \cite[\textsection 1.5]{EisermannLamm:2007}
  This is an interesting topic in its own right, but we shall use
  it here merely as an auxiliary tool for our induction proof.
\end{remark}

\subsection{Slice and ribbon links} \label{sub:SliceRibbonLinks}

In order to put our subject matter into perspective, 
we briefly recall the $4$-dimensional setting of slice and ribbon links.

We consider $\R^3$ as a subset of $\R^4$ via the 
standard inclusion $(x_1,x_2,x_3) \mapsto (x_1,x_2,x_3,0)$.
We say that a link $L \subset \R^3$ bounds a surface 
$S \subset \R^4_+ = \{ x \in \R^4 \mid x_4 \ge 0 \}$
if $S$ is properly embedded smooth surface such that $\partial S = S \cap \R^3 = L$.  
(We will always assume that $S$ has no closed components.)
A link $L$ is called \emph{slice} 
if it bounds $n$ disjointly embedded disks in $\R^4_+$.
This is sometimes called slice \emph{in the strong sense}; 
see Fox \cite{Fox:1962} for a discussion of weaker notions.
Slice knots naturally appear in the study of surfaces $\Sigma \subset \S^4$ 
with singularities, that is, isolated points where the surface $\Sigma$
is not locally flat, see Fox--Milnor \cite{FoxMilnor:1966},
and Livingston \cite{Livingston:2005} for a survey.
% see \cite[\textsection 8F]{Rolfsen:1990}.

If an $n$--component link bounds a surface $S \subset \R^4_+$,
then the Euler characteristic is bounded by $\chi(S) \le n$, 
and the maximum is attained precisely for slice links.
We are particularly interested in the case where 
the surface $S \subset \R^4_+$ has no local minima, 
or more explicitly, the height function $h \co \R^4_+ \to \R_+$, $x \mapsto x_4$, 
restricts to a Morse function $h|_S$ without local minima.
The following observation goes back to Fox \cite{Fox:1962}:

\begin{proposition}
  For every link $L \subset \R^3$ % and every $n \in \N$ 
  the following assertions are equivalent:
  \begin{enumerate}
  \item
    $L$ bounds an immersed ribbon surface 
    $S \subset \R^3$ such that $\chi(S) = n$.
  \item
    $L$ bounds a surface $S_+ \subset \R^4_+$ 
    without local minima such that $\chi(S_+) = n$.
    \qed
  \end{enumerate}
\end{proposition}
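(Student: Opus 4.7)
The plan is to prove the equivalence by explicit construction: for (i)$\,\Rightarrow\,$(ii), push each ribbon singularity off into the fourth dimension; for (ii)$\,\Rightarrow\,$(i), use Morse theory on the height function to decompose $S_+$ into bands and caps, then project orthogonally back to $\R^3$. The result goes back to Fox \cite{Fox:1962} and the argument is essentially local in each direction.

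For (i)$\,\Rightarrow\,$(ii), I would regard $\R^3$ as $\R^3 \times \{0\} \subset \R^4_+$ and resolve each ribbon singularity locally. By definition, one of the two preimage arcs in $\Sigma$ of the singular arc $A$ is interior; I would choose a tubular neighborhood of this interior arc in $\Sigma$ and push it slightly into positive $x_4$ via a bump function, leaving $\partial \Sigma$ and hence $L$ untouched. This local modification eliminates the double arc $A$ without creating new singularities or changing $\chi$. After treating all ribbon singularities in this way, the result $S_+ \subset \R^4_+$ is a properly embedded surface with $\partial S_+ = L$ and $\chi(S_+) = n$, the height function $h(x) = x_4$ satisfies $h|_{S_+} \ge 0$ with equality only on $L$, and since we only pushed material upward, a small generic perturbation makes $h|_{S_+}$ Morse with all interior critical points of index $1$ or $2$.

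For (ii)$\,\Rightarrow\,$(i), after a generic perturbation I may assume $h|_{S_+}$ is Morse with distinct critical values $0 < c_1 < \dots < c_k$; by hypothesis every interior critical point is a saddle or a maximum. Between consecutive critical levels $S_+$ is a product, so each slice $S_+ \cap \{x_4 = t\}$ is a link isotopic to the previous slice; crossing $c_i$ either attaches a band (saddle) or caps off a disk (maximum). Projecting orthogonally onto $\R^3$ converts this description into a sequence of isotopies, band attachments and disk cappings in $\R^3$, producing an immersed surface $S$ with $\partial S = L$ and $\chi(S) = n$.

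The main obstacle is verifying that the projected surface $S$ has only ribbon singularities. After a further small perturbation of $S_+$ to achieve transversality of the projection, the self-intersection locus is a disjoint union of arcs. Consider one such arc $A \subset \R^3$ with preimages $\alpha_1, \alpha_2 \subset \Sigma$. If both $\alpha_i$ had endpoints on $\partial \Sigma$, then the endpoints of $A$ would lie on $L$ and come from \emph{distinct} boundary points of $\Sigma$ with the same image, contradicting that $f|_{\partial\Sigma}$ is an embedding onto $L$. Hence at least one $\alpha_i$ is interior, which is precisely the ribbon condition of \fullref{def:ImmersedBandSurface}. The remaining care is to arrange that the projection is generic throughout the Morse construction, ruling out triple points, higher order contact, and interior points mapping onto $L$; this is a standard application of the transversality theorem and does not affect $\chi$.
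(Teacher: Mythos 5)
Your overall strategy (push into $\R^4_+$ one way, sweep a Morse function the other way) is the same as the paper's, but both directions as written have real gaps. In (i)$\Rightarrow$(ii), pushing up only tubular neighbourhoods of the interior preimage arcs leaves the rest of the surface sitting in $\R^3=\partial\R^4_+$, so the resulting $S_+$ is not properly embedded and your assertion that $h|_{S_+}\ge 0$ with equality only on $L$ is false for what you actually constructed. You must still lift the entire interior of $\Sigma$ off the boundary hyperplane, and that is exactly where local minima threaten to appear: a generic smooth function on $\Sigma$ that vanishes on $\partial\Sigma$ and is positive inside will in general have interior local minima, so ``a small generic perturbation makes $h|_{S_+}$ Morse with all interior critical points of index $1$ or $2$'' is precisely the statement that needs proof, not a consequence of having ``only pushed material upward.'' The paper's device is to take $h_0(x)=d(x,\partial\Sigma)$, the distance to the boundary in the metric induced by the immersion: this function vanishes exactly on $\partial\Sigma$ and has \emph{no} interior local minima (triangle inequality), both properties survive smoothing, and $F(x)=(f(x),h(x))$ is then an embedding once each interior singular arc is arranged to lie strictly farther from $\partial\Sigma$ than the proper arc it crosses. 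Some such intrinsic choice of height function is the missing idea in your first direction.

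In (ii)$\Rightarrow$(i) the gap is more serious: the ribbon condition cannot be obtained from transversality of the projection. A generic projection of an embedded surface in $\R^4$ into $\R^3$ is a generic map of a surface into a $3$--manifold, whose \emph{stable} singularities include Whitney umbrellas (branch points), triple points and closed double curves; these cannot be removed by a further small perturbation, and none of them is a ribbon singularity. So your claim that ``the self-intersection locus is a disjoint union of arcs'' already fails for the honest projection, and your boundary argument only excludes the case where both preimage arcs are proper --- it does not show that the double set consists of arcs, that one preimage arc is interior while the other is proper, or that there are no triple or branch points. The ribbon structure has to come from the Morse decomposition itself, which is where the no-local-minima hypothesis enters: sweeping from the top, each maximum contributes a small embedded disk and each saddle attaches a thin strip to the boundary of the surface already built; that strip can be chosen to meet the existing surface transversally in arcs running across its width, and these piercings are ribbon singularities \emph{by construction}. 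This strip-attachment argument (rather than general position for the projection) is what the paper's proof uses, and it is the step your write-up replaces with an appeal to transversality that does not hold.
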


\begin{Notes}
\begin{proof}[Sketch of proof]
  ``$\Rightarrow$''
  Given an immersed ribbon surface $S \subset \R^3$,
  its interior can be pushed into $\R^4_+$ so as to yield 
  a properly embedded surface $S_+ \subset \R^4_+$ without local minima.
  More explicitly, we parametrize $S$ by a smooth 
  immersion $f \co \Sigma \looparrowright \R^3$. 
  This defines a Riemannian metric on $\Sigma$, and we denote by 
  $d \co \Sigma \times \Sigma \to \R_+$ the induced path metric.  
  For each singularity we can assume that the interior arc is
  entirely contained in some metric disk disjoint from the boundary.
  The function $h_0 \co \Sigma \to \R_+$, $h_0(x) = d(x,\partial\Sigma)$
  is continuous, has no local minima on its interior, 
  and satisfies $h_0^{-1}(0) = \partial\Sigma$.
  Moreover, $h_0$ is smooth on some open neighbourhood of the boundary $\partial\Sigma$.  
  By  an arbitrarily small perturbation we obtain a smooth Morse function $h$,
  still without local minima and satisfying $h^{-1}(0) = \partial\Sigma$.  
  The map $F \co \Sigma \to \R^4_+$, $F(x) = (f(x),h(x))$ 
  is a smooth embedding as desired.

  ``$\Leftarrow$''
  Given a smoothly embedded surface $S_+ \subset \R^4_+$, we can put it
  into Morse position with respect to $h$ by an arbitrarily small isotopy.
  Let us assume that critical points occur at the levels $1-\onehalf, \dots, m-\onehalf$.
  We then proceed from top to bottom: for each regular value $t=m,\dots,0$
  we equip the link $L_t = h|_S^{-1}(t)$ with an immersed surface $S_t$
  such that $\chi(S_t) = \chi( h|_S^{-1}([t,m])$.
  Each maximum adds a small embedded disk $D \subset \R^3$
  and increases the Euler characteristic by one, $\chi(S_t) = \chi(S_{t+1})+1$.
  Each saddle point glues a strip to the existing surface $S_{t+1}$
  and decreases the Euler characteristic by one, $\chi(S_t) = \chi(S_{t+1})-1$.
  This strip may pierce the surface, thus creating ribbon singularities.
\end{proof}
\end{Notes}

As a consequence, $L$ is a ribbon link if and only if it bounds 
$n$ disjointly embedded disks in $\R^4_+$ without local minima.
Whether all slice knots are ribbon is an open question 
which first appeared as Problem 25 in Fox's problem list \cite{Fox:1962}.
Also see Problem 1.33 of Kirby's problem list \cite{Kirby:1997}.

\begin{remark} \label{rem:AlexanderSlice}
  Every $n$--component slice link $L$ satisfies 
  $\Null(L) = n-1$ and $\Sign(L) = 0$ (Murasugi \cite{Murasugi:1965}),
  and more generally $\Null_\omega(L) = n-1$ and $\Sign_\omega(L) = 0$ where 
  $\omega$ is a $2N$th root of unity and $N$ is prime (Tristram \cite{Tristram:1969}). 
  For $n = 1$ the Alexander polynomial factors as $\Delta(L) = f(q^+) \cdot f(q^-)$ 
  % where $f \in \Z[q^\pm]$ is some Laurent polynomial with $f(1) = 1$.
  with some $f \in \Z[q^\pm]$ (Fox--Milnor \cite{FoxMilnor:1966}).
  As a consequence $\det(K)$ is a square integer for 
  every slice knot $K$, in particular $\det(K) \equiv 1 \mod{8}$.
  For $n \ge 2$, however, we have $\Delta(L) = 0$, see Kawauchi \cite{Kawauchi:1978}.
  % These results indicate that the theory works most nicely for knots, 
  % but for links the vanishing of $\Delta(L)$ complicates matters.
  It is a classical topic to study higher-order 
  Alexander polynomials to remedy this problem; 
  for the multi-variable Alexander polynomial 
  see Kawauchi \cite{Kawauchi:1978} and Florens \cite{Florens:2004}. 
  % \cite{Kawauchi:1996,Hillman:2002}. 
  We will instead look for extensions and analogies 
  in the realm of quantum invariants. 
\end{remark}

%%%%%%%%%%%%%%%%%%%%%%%%%%%%%%%%%%%%%%%%%%%%%%%%%%%%%%%%%%%%%%%%%%%%%%%%%%%%%

\section{The Jones nullity of ribbon links} \label{sec:RibbonJonesNullity}

Jones' skein relation \eqref{eq:HOMFLYPT} serves well for 
the upper nullity bound, but it turns out to be ill suited 
for the inductive proof that we shall be giving for the lower bound.
We will thus prepare the scene by recalling 
Kauffman's bracket (\sref{sub:KauffmanBracket}).  
Ribbon link diagrams suggest a proof by induction, but one has
to suitably generalize the statement (\sref{sub:ProofStrategy}).
I present here what I believe is the simplest induction proof, 
based on the Euler characteristic (\sref{sub:EulerJonesNullity}).
% For technical reasons we shall concentrate on \emph{ribbon} links 
% and thus neglect the slice-ribbon-problem. % (\sref{sub:SliceRibbonLinks}).

\subsection{The Kauffman bracket} \label{sub:KauffmanBracket}

The Kauffman bracket \cite{Kauffman:1987} is a map $\Diagrams \to \Z[A^{\pm}]$,
denoted by $D \mapsto \bracket{D}$, from the set $\Diagrams$ 
of unoriented planar link diagrams to the ring $\Z[A^{\pm}]$
of Laurent polynomials in the variable $A$.
It is defined by the skein relation
\newcommand{\kskein}[1]{\bigl\langle\raisebox{-0.6ex}{\includegraphics[height=2.5ex]{#1}}\bigr\rangle}
\begin{align*}
  \kskein{uskein-p} & = A \kskein{uskein-A} + A^{-1} \kskein{uskein-B} ,
  \\
  \bracket{D \sqcup \bigcirc} & =  \bracket{D} \cdot (-A^{+2}-A^{-2}) ,
  \\
  \bracket{\bigcirc} & = 1.
\end{align*}

The bracket polynomial $\bracket{D}$ is invariant under 
Reidemeister moves R2 and R3, called \emph{regular isotopy}.
%
% The axioms entail $\bracket{\bigcirc^n} = (-A^{+2}-A^{-2})^{n-1}$ and 
% $\bracket{D_1 \sqcup\dots\sqcup D_n} = \bracket{\bigcirc^n}\bracket{D_1}\cdots\bracket{D_n}$.
% If $D = D_1 \cup\dots\cup D_n$ is such that for each pair $(j,k)$ with $j < k$ 
% the component $D_j$ lies above $D_k$, then $D$ can be transformed into 
% the distant union $D' = D_1 \sqcup\dots\sqcup D_n$ by regular isotopy,
% and thus $\bracket{D} = \bracket{D'} = \bracket{\bigcirc^n}\bracket{D_1}\cdots\bracket{D_n}$.
%
Normalizing with respect to the writhe one obtains
an isotopy invariant:  upon the change of variables 
$q = -A^{-2}$ we thus recover the Jones polynomial 
\[
V(L)|_{(q=-A^{-2})} = \bracket{D} \cdot (-A^{-3})^{\mathrm{writhe}(D)} .
\]
Here $V(L) = \V[2](L)$ is the Jones polynomial of the oriented link $L$, 
while $D$ is a planar diagram representing $L$, and $\bracket{D}$ 
is its bracket polynomial (forgetting the orientation of $D$).
The writhe of $D$ is the sum of all crossing signs.

\begin{notation}
  All subsequent calculations take place in the ring 
  $\Z[A^\pm] \supset \Z[q^\pm] \supset \Z[t^\pm]$
  with $q = -A^{-2}$ and $t = q^2 = A^{-4}$.
  This context explains the sign in $q = -t^{\onehalf}$ with $t^{\onehalf} = A^{-2}$:
  although the roots $\pm t^{\onehalf}$ are conjugated in $\Z[t^{\pm\onehalf}]$ 
  over $\Z[t^\pm]$, this no longer holds in $\Z[A^\pm]$.
  Choosing this convention I have tried to reconcile simplicity and tradition, 
  so that all formulae become as simple as possible yet remain easily comparable.
  The results stated in the introduction are invariant
  under all possible normalizations and parametrizations, 
  but of course such conventions are important in actual 
  calculations and concrete examples.
\end{notation}

\subsection{Proof strategy} \label{sub:ProofStrategy}

\newcommand{\pica}[1]{\Bigl\langle\raisebox{-2ex}{\includegraphics[height=5ex,angle=0]{#1}}\Bigr\rangle}

Applying Kauffman's skein relation to a 
ribbon singularity, we obtain the following $16$ terms:
\begin{equation} \label{eq:BandSingularityResolution}
  \begin{split}
    \pica{ribbon-singularity} = & 
    + \pica{ribbon-resolve-BABA} + \pica{ribbon-resolve-ABAB}
    + \pica{ribbon-resolve-AAAA} + \pica{ribbon-resolve-BBBB} 
    \\ & 
    + \pica{ribbon-resolve-ABBA} + \pica{ribbon-resolve-BAAB} 
    + A^{+4} \pica{ribbon-resolve-BBAA} + A^{-4} \pica{ribbon-resolve-AABB}
    \\ & 
    + A^{+2} \pica{ribbon-resolve-BAAA} + A^{+2} \pica{ribbon-resolve-BBAB} 
    + A^{-2} \pica{ribbon-resolve-AAAB} + A^{-2} \pica{ribbon-resolve-BABB}
    \\ & 
    + A^{+2} \pica{ribbon-resolve-ABAA} + A^{+2} \pica{ribbon-resolve-BBBA} 
    + A^{-2} \pica{ribbon-resolve-AABA} + A^{-2} \pica{ribbon-resolve-ABBB} .
  \end{split}
\end{equation}

For a band crossing we obtain the same $16$ terms with permuted coefficients:
\begin{equation} \label{eq:BandCrossingResolution}
  \begin{split}
    \pica{ribbon-crossing} = & 
    + \pica{ribbon-resolve-BABA} + \pica{ribbon-resolve-ABAB} 
    + \pica{ribbon-resolve-BBAA} + \pica{ribbon-resolve-AABB} 
    \\ & 
    + \pica{ribbon-resolve-ABBA} + \pica{ribbon-resolve-BAAB} 
    + A^{+4} \pica{ribbon-resolve-AAAA} + A^{-4} \pica{ribbon-resolve-BBBB} 
    \\ & 
    + A^{+2} \pica{ribbon-resolve-BAAA} + A^{-2} \pica{ribbon-resolve-BBAB} 
    + A^{+2} \pica{ribbon-resolve-AAAB} + A^{-2} \pica{ribbon-resolve-BABB} 
    \\ &
    + A^{+2} \pica{ribbon-resolve-ABAA} + A^{-2} \pica{ribbon-resolve-BBBA} 
    + A^{+2} \pica{ribbon-resolve-AABA} + A^{-2} \pica{ribbon-resolve-ABBB} .
  \end{split}
\end{equation}

We are mainly interested in ribbon links, so we start out 
with a ribbon surface consisting only of disk components.  
Some of the resolutions displayed above, however, will lead 
to more complicated components, namely annuli or M\"obius bands.
We can avoid either M\"obius bands or annuli by adding half twists 
as desired, but we cannot avoid both of them altogether.
In order to set up an induction proof, this difficulty forces us 
to consider a suitable generalization including (at least) 
annuli or M\"obius bands. 

\begin{Notes}
We digress to mention that setting up
a ``loaded induction'' is a common phenomenon,
which has been so aptly formulated by George P\'olya:
\begin{quote}
  In general, in trying to devise a proof by mathematical 
  induction you may fail for two opposite reasons.  
  You may fail because you try to prove too much:  
  your [induction hypothesis] $P(n)$ is too heavy a burden.  
  Yet you may also fail because you try to prove too little: 
  your $P(n)$ is too weak a support.
  You have to balance the statement of your theorem 
  so that the support is just enough for the burden.
  \cite[p.\,119]{Polya:1954}
\end{quote}
For both Theorems \ref{Thm:RibbonJonesNullity} and 
\ref{Thm:MultiplicativityMod32} the principal difficulty 
is to find a suitable induction hypothesis.
It is hoped that P\'olya's aphorism will provide
some meta-mathematical explanation. 
\end{Notes}

\subsection{A lower bound for the Jones nullity} \label{sub:EulerJonesNullity}

Even though we are primarily interested in ribbon links, 
we are obliged to prove a more general statement, as motivated above.
The following seems to be the simplest setting supporting the desired inductive proof:

\begin{proposition} \label{prop:EulerJonesNullity}
  If a link $L \subset \R^3$ bounds an immersed ribbon surface 
  $S \subset \R^3$ of positive Euler characteristic $n$, 
  then $V(L)$ is divisible by $V(\bigcirc^n) = (q^+ + q^-)^{n-1}$.
\end{proposition}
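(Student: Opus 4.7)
The plan is to prove \fullref{prop:EulerJonesNullity} by induction on the complexity of a band diagram $D$ representing the ribbon surface $S$. It is convenient to switch to the Kauffman bracket: under $q = -A^{-2}$ we have $q^{+}+q^{-} = -(A^{2}+A^{-2})$, and $V(\partial S)$ differs from $\bracket{D}$ only by a unit (the writhe normalization), so the statement is equivalent to divisibility of $\bracket{D}$ by $(A^{2}+A^{-2})^{n-1}$ in $\Z[A^{\pm}]$.

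The natural complexity measure is the total number of ribbon singularities and band crossings in $D$. For the base case, when this count is zero, $S$ is embedded and its components lie in disjoint planar regions (hence in disjoint balls); each component is a standard, possibly twisted disk, annulus, or M\"obius band. The disjoint-union identity $\bracket{D_{1}\sqcup D_{2}} = \bracket{D_{1}}\bracket{D_{2}}(-A^{2}-A^{-2})$ (a direct consequence of the bracket axioms) iterates to $\bracket{D} = (-A^{2}-A^{-2})^{k-1}\prod_{j=1}^{k}\bracket{D_{j}}$, where $k$ is the number of components. Since each $\chi(S_{j})\le 1$ and $\sum_{j}\chi(S_{j})=n$, we have $k\ge n$, and the factor $(A^{2}+A^{-2})^{k-1}$ already contains the desired $(A^{2}+A^{-2})^{n-1}$.

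For the inductive step, I would pick one ribbon singularity (or band crossing) and expand $\bracket{D}$ by the 16-term formula \eqref{eq:BandSingularityResolution} (respectively \eqref{eq:BandCrossingResolution}). Each of the sixteen summands is the bracket of a band diagram $D_{i}$ of strictly smaller complexity, representing a ribbon surface $S_{i}$ obtained by a specific local modification of $S$. I would then check case by case how the Euler characteristic $\chi(S_{i})$ compares with $\chi(S)=n$: in the benign cases $\chi(S_{i})\ge n$ and the induction hypothesis immediately delivers divisibility of that summand by $(A^{2}+A^{-2})^{n-1}$; in the remaining cases, where the local smoothing creates an annulus or M\"obius piece and drops the Euler characteristic, one groups several summands together and uses algebraic identities in $\Z[A^{\pm}]$ to recover the missing factors of $(A^{2}+A^{-2})$.

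The principal obstacle is calibrating the induction hypothesis. P\'olya's warning quoted by the author is directly relevant: the statement must be enlarged to allow ribbon surfaces whose components are disks, annuli, or M\"obius bands, because the latter two unavoidably appear in the resolution of a ribbon singularity. The real combinatorial work lies in the case-by-case analysis of the sixteen terms in \eqref{eq:BandSingularityResolution} and \eqref{eq:BandCrossingResolution}, identifying which terms must be combined, and showing that such groupings always yield an additional factor of $(A^{2}+A^{-2})$ sufficient to compensate for any drop in Euler characteristic.
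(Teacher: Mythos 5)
Your reduction to the Kauffman bracket and your base case are fine (the inequality $k\ge n$ from $\chi(S_j)\le 1$ is exactly the right observation, and it does not matter what the individual split components look like). The gap is in the inductive step, and it sits precisely at the point you defer to ``grouping summands and algebraic identities.'' If you expand a single ribbon singularity of $D$ by the sixteen-term formula \eqref{eq:BandSingularityResolution}, several of the resulting diagrams --- the ones in which both bands are spliced crosswise, so that two band segments merge into one --- have Euler characteristic \emph{strictly less} than $n$. For those terms the induction hypothesis only yields divisibility by a lower power of $(A^{2}+A^{-2})$, and there is no identity among the brackets of these individual resolutions that restores the missing factors: they are brackets of genuinely different links, and nothing forces their sum to acquire an extra zero at $q=i$. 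The same problem occurs if you try to resolve a band crossing via \eqref{eq:BandCrossingResolution}. So the case analysis you postpone is not a technical chore; as stated, it cannot be completed.

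The paper's proof avoids this by never expanding $\bracket{D}$ on its own. It inducts on the ribbon number alone (band crossings are never resolved), replaces the chosen ribbon singularity by a band crossing to obtain $S'$ with the same Euler characteristic and one fewer singularity, and then subtracts the two sixteen-term expansions. In the difference \eqref{eq:BandSingularityRemoval} the problematic low-Euler-characteristic resolutions occur with \emph{identical} coefficients in \eqref{eq:BandSingularityResolution} and \eqref{eq:BandCrossingResolution} and cancel; only six terms survive, all with Euler characteristic $n$ or $n+1$ and smaller ribbon number, so the induction hypothesis applies to each of them and to $S'$ itself. This cancellation between the two expansions is the essential idea your proposal is missing; without it, or some substitute for it, the induction does not close. (A small secondary point: your description of the crossingless components as disks, annuli and M\"obius bands is inaccurate --- a component with several junctions can have any topology --- but your argument does not actually use that claim.)
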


Divisibility means that $V(L) = (q^+ + q^-)^{n-1} \, \tilde{V}$
for some $\tilde{V} \in \Z[q^\pm]$.  In this formulation the proposition
holds for all $n \in \Z$ but it is trivial, of course, for $n \le 1$.

\begin{example} \label{exm:HopfCable}
  The surface $S$ of \fullref{fig:BandSurfaceExample}a 
  has Euler characteristic $\chi(S) = 1+1+0 = 2$,
  so for the link $L = \partial S$ we expect $\Null V(L) \ge 1$.
  Indeed we find $\Null V(L) = 1$, because % the Jones polynomial is
  \[ 
  % V(L) = q^7 + q^3 + 2q + 2q^{-1} + q^{-3} + q^{-7} 
  V(L) = ( q^+ + q^- ) \cdot \bigl( q^6 - q^4 + 2q^2 + 2q^{-2} - q^{-4} + q^{-6} \bigr) .
  \]
  We also remark that $L = 8n8$ is the (anti-parallel) 
  $2$-cable of the Hopf link with zero framing.  
  It thus bounds a surface consisting of two annuli.
  According to the proposition, $L$ does not bound 
  a surface $S$ with $\chi(S)=3$ or $\chi(S)=4$.
\end{example}

\begin{figure}[h]
  \centering
  \subfigure[a surface with $\chi = 2$]{\qquad\includegraphics[height=14ex]{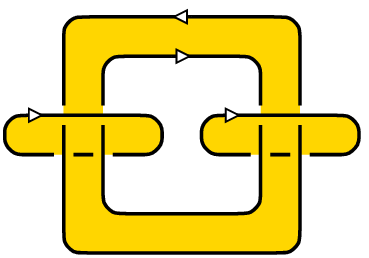}\qquad}
  \qquad
  \subfigure[a surface with $\chi = 1$]{\includegraphics[height=14ex]{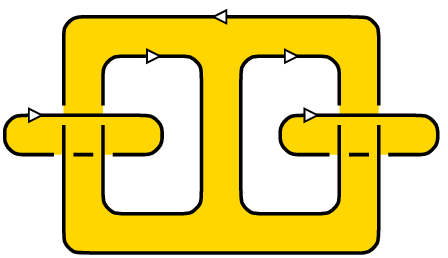}}
  \caption{Examples of ribbon surfaces}
  \label{fig:BandSurfaceExample}
\end{figure}

\begin{example} \label{exm:HopfSum}
  The surface $S$ of \fullref{fig:BandSurfaceExample}b
  has Euler characteristic $\chi(S) = 1+1-1 = 1$.
  The link $L = \partial S$ is the connected sum 
  $H_+ \cs H_- \cs H_+ \cs H_-$ of four Hopf links, whence
  \[
  % V(L) = \bigl[ (q^{+1}+q^{+5}) (q^{-1}+q^{-5}) \bigr]^2 .
  V(L) = (q^{+1}+q^{+5})^2 \cdot (q^{-1}+q^{-5})^2 .
  \]
  We thus find $\det(L) = 16$ and $\Null V(L) = 0$.
  This example shows that the lower bound for $\Null V(L)$ 
  does not only depend on the number of disk components of $S$.
\end{example}

\begin{proof}[Proof of Proposition \ref{prop:EulerJonesNullity}]
  We proceed by induction on the \emph{ribbon number} $r(S)$ 
  of the ribbon surface $S$, that is, the number of ribbon singularities. 
  If $r(S) = 0$, then $S = S_0 \sqcup \bigcirc^n$, 
  and so $V(L)$ is divisible by $V(\bigcirc^n)$.
  To see this, notice that a connected surfaces with positive
  Euler characteristic is either a sphere, a projective plane, 
  or a disk.  Since $S$ has no closed components, this implies
  that $\chi(S) = n > 0$ can only be realized by (at least) $n$ disks.
  If the immersed surface $S$ has no singularities, then it is in fact 
  embedded in $\R^3$ and so $L$ has (at least) $n$ trivial components.

  For the induction step we assume that $r(S) \ge 1$ and that 
  the assertion is true for all ribbon surfaces $S'$ with $r(S') < r(S)$.  
  We replace one ribbon singularity by a band crossing, that is,
  \newcommand{\pic}[1]{\raisebox{-2ex}{\includegraphics[height=5ex]{#1}}}
  \begin{equation} \label{eq:Desingularization}
    \text{we transform }\quad
    S = \pic{ribbon-singularity}
    \quad\text{ into }\quad
    S' = \pic{ribbon-crossing} .
  \end{equation}
  For $L' = \partial S'$ we know by induction 
  that $V(L')$ is divisible by $V(\bigcirc^n)$.

  We represent the link $L$ and its ribbon surface $S$ 
  by a band diagram $D$.  Since the Jones polynomial $V(L)$ 
  and the bracket polynomial $\bracket{D}$ satisfy $V(L) = \pm A^k \bracket{D}$ 
  for some exponent $k \in \Z$, the assertion for the Jones polynomial 
  $V(L)$ and the bracket polynomial $\bracket{D}$ are equivalent.
  In the rest of the proof we will work with the latter.
  %
  % Since $r(S') < r(S)$, the induction hypothesis ensures that 
  % $\bracket{D'}$ is divisible by $\bracket{\bigcirc^n}$. 
  % It remains to show that $\bracket{D} - \bracket{D'}$ 
  % is a multiple of $\bracket{\bigcirc^n}$.  
  %
  Subtracting Equations \eqref{eq:BandSingularityResolution}
  and \eqref{eq:BandCrossingResolution} we obtain the following difference:
  \begin{multline} \label{eq:BandSingularityRemoval}
    \pica{ribbon-singularity} - \pica{ribbon-crossing} = 
    (A^{+2}-A^{-2})\left( \pica{ribbon-resolve-BBAB} - \pica{ribbon-resolve-AAAB} \right)
    \\
    + (A^{+4}-1) \left( \pica{ribbon-resolve-BBAA} - \pica{ribbon-resolve-AAAA} \right)
    + (A^{-4}-1) \left( \pica{ribbon-resolve-AABB} - \pica{ribbon-resolve-BBBB} \right) .
  \end{multline}
  
  All ribbon surfaces on the right hand side % of Equation \eqref{eq:BandSingularityRemoval}
  have ribbon number smaller than $r(S)$, so we can apply our induction hypothesis.
  Cutting open a band increases the Euler characteristic by one,
  whereas regluing decreases Euler characteristic by one:
  \[
  \newcommand{\euler}[1]{\chi\left(\raisebox{-1ex}{\includegraphics[height=3.2ex]{#1}}\right)}
  \euler{ribbon-split} = \euler{ribbon-straight} + 1 .
  \]
  On the right hand side of Equation \eqref{eq:BandSingularityRemoval},
  the two surfaces in the first parenthesis have Euler characteristic $n+1$,
  the four surfaces in the second parenthesis have Euler characteristic $n$, 
  so each bracket polynomial on the right hand side is divisible 
  by $\bracket{\bigcirc^n}$.  We conclude that $\bracket{D}$ is divisible 
  by $\bracket{\bigcirc^n}$, which completes the induction.
\end{proof}

%%%%%%%%%%%%%%%%%%%%%%%%%%%%%%%%%%%%%%%%%%%%%%%%%%%%%%%%%%%%%%%%%%%%%%%%%%%%%

\section{Band crossing changes} \label{sec:BandCrossingChanges}

Proposition \ref{prop:EulerJonesNullity} says that $V(L)$ 
is divisible by $V(\bigcirc^n) = (q^+ + q^-)^{n-1}$ whenever $L$ 
bounds a ribbon surface $S$ of positive Euler characteristic $n$.
The value $[L/S] := [V(L) / (q^+ + q^-)^{n-1}]_{(q \mapsto i)}$ 
is thus well-defined (\sref{sub:SurfaceDeterminant}),
and we show that it is invariant under certain operations
on the surface $S$, namely band crossing changes 
(\sref{sub:BandCrossingChanges}) and band twists (\sref{sub:BandTwists}).
We generalize these observations and establish a convenient framework 
by introducing the notion of surface invariants of finite type with respect 
to band crossing changes (\sref{sub:FiniteTypeSurfaceInvariants}).

\subsection{The surface determinant} \label{sub:SurfaceDeterminant}

We fix the following notation:

\begin{definition}[surface determinant]
  Consider an oriented link  $L \subset \R^3$ bounding an immersed 
  ribbon surface $S \subset \R^3$ with Euler characteristic $n=\chi(S)$.
  We define the \emph{determinant} of the surface $(S,L)$ 
  to be $[L/S] := [V(L) / (q^+ + q^-)^{n-1}]_{q \mapsto i}$.
\end{definition}

\begin{remark} \label{rem:SurfaceDetermninant}
  For $n=1$ this is the ordinary determinant, 
  $\det(L) = V(L)_{q \mapsto i}$.  For $n \le 0$ we multiply
  by $(q^+ + q^-)^{1-n}$ and evaluation at $q=i$ thus yields $[L/S] = 0$.

  We do not assume $S$ to be oriented, nor even orientable.
  In order to speak of the Jones polynomial $V(L)$, however, 
  we have to choose an orientation for the link $L = \partial S$.

  In general $[L/S]$ is not determined by the surface $S$ alone:
  although the unoriented link $L$ is determined by $S$, 
  the orientation of $L$ adds an extra bit of information.
  
  Likewise, $[L/S]$ is not an invariant of the link $L$ alone: 
  different surfaces may have different Euler characteristics,
  and we do not require $\chi(S)$ to be maximal.

  According to Proposition \ref{prop:EulerJonesNullity}
  the surface determinant $[L/S]$ is non zero only if 
  $\chi(S)\ge1$ and $S$ maximizes the Euler characteristic 
  of surfaces spanning $L$. 
\end{remark}

\begin{remark} \label{rem:Orientations}
  Changing the orientation of any link component changes the writhe 
  by a multiple of $4$ and thus $[L/S]$ changes by a factor $\pm1$.

  If we choose a surface component of $S$ and reverse
  the orientation of its entire boundary, then the writhe changes
  by a multiple of $8$, and so $[L/S]$ remains unchanged. 

  This applies in particular to reversing a link component 
  that bounds a disk or M\"obius band.  If there are no other 
  components, then $[L/S]$ is independent of orientations.
\end{remark}  

\begin{example} \label{exm:SignOfDeterminant}
  We always have $[L/S] \in \Z$ or $[L/S] \in i\Z$,
  depending on whether $c(L) - \chi(S)$ is even or odd.
  Here are two simple examples:
  \newcommand{\pic}[1]{\raisebox{-4.5ex}{\includegraphics[height=10ex]{#1}}}
  \begin{equation} \label{eq:EssentialExample}
    S = \pic{hopf2}
    \quad\text{ and }\quad
    S' = \pic{link4a1} .
  \end{equation}
  We have $[L/S] = \det L = 4$ and $[L'/S'] = \det L' = -4i$.  
  We remark that $L = H_- \cs H_+$, and we can change orientations 
  so as to obtain $H_- \cs H_-$ or $H_+ \cs H_+$, both with determinant $-4$.
  Notice that $[L'/S']$ is independent of orientations.
\end{example}

\subsection{Band crossing changes} \label{sub:BandCrossingChanges}

The following observation will be useful:

\begin{proposition} \label{prop:BandCrossingChange}
  The surface determinant $[L/S]$ is invariant 
  under band crossing changes.
  % with respect to band crossing changes.
\end{proposition}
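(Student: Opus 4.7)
The plan is to establish the strengthened divisibility
\[
V(L) - V(L') \;\equiv\; 0 \pmod{(q^+ + q^-)^n},
\]
which is one power higher than Proposition~\ref{prop:EulerJonesNullity} provides for $V(L)$ and $V(L')$ separately. Dividing by $(q^+ + q^-)^{n-1}$ and substituting $q = i$ then annihilates the difference, yielding $[L/S] = [L'/S']$.

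I would represent $S$ and $S'$ by band diagrams $D$ and $D'$ coinciding outside a neighbourhood of the chosen band crossing, where they carry opposite over/under data at all four link-diagram crossings composing the band crossing. Reversing these four over/under choices simultaneously is exactly the substitution $A \leftrightarrow A^{-1}$ in the coefficients of the skein expansion~\eqref{eq:BandCrossingResolution}, while leaving the sixteen resolved diagrams themselves unchanged. Subtracting, the six coefficient-$1$ terms cancel, and I obtain
\[
\bracket{D} - \bracket{D'} = (A^{+4}-A^{-4}) \bigl( \bracket{R_1} - \bracket{R_2} \bigr) + (A^{+2}-A^{-2})\,T,
\]
where $R_1, R_2$ are the two consistent resolutions (all-$A$ and all-$B$ smoothings of the four crossings) and $T$ is a signed sum of the eight single-switch resolutions.

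Next I would carry out the Euler-characteristic bookkeeping for each resolution, in the band-surgery spirit used in the proof of Proposition~\ref{prop:EulerJonesNullity}. The two consistent resolutions $R_1, R_2$ perform four matched smoothings and preserve Euler characteristic, so each corresponds to a ribbon surface with $\chi = n$. Each mixed resolution contributing to $T$ performs three matched smoothings and one mismatched one, the mismatched smoothing cutting a band rather than fusing two, so each corresponds to a ribbon surface with $\chi = n+1$. Proposition~\ref{prop:EulerJonesNullity} then yields that $\bracket{R_1}$ and $\bracket{R_2}$ are divisible by $(A^2+A^{-2})^{n-1}$, while every bracket appearing in $T$ is divisible by $(A^2+A^{-2})^n$. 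The key identity $A^{+4}-A^{-4} = (A^2-A^{-2})(A^2+A^{-2})$ supplies the missing factor for the first group, so every summand of $\bracket{D}-\bracket{D'}$ is divisible by $(A^2+A^{-2})^n$. Since the writhe is preserved under a band crossing change (the four diagram crossings carry signs summing to zero for coherently oriented bands), translating back via $q = -A^{-2}$ yields the claimed divisibility of $V(L) - V(L')$ by $(q^+ + q^-)^n$.

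The main technical obstacle is the Euler-characteristic identification for the eight mixed resolutions: one must verify, by local inspection at the band crossing, that a lone mismatched smoothing splits a band rather than fusing two, increasing $\chi$ by one. The four-fold symmetry of the band crossing reduces all eight cases to a single model calculation, essentially the same one used to establish the $\chi = n+1$ claim in the proof of Proposition~\ref{prop:EulerJonesNullity}.
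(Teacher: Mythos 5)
Your proof is correct and follows essentially the same route as the paper's: expand both band crossings via the Kauffman bracket, cancel the coefficient--$1$ terms, and use the Euler-characteristic bookkeeping of Proposition \ref{prop:EulerJonesNullity} to gain one extra factor of $(q^++q^-)$ in the difference. The only cosmetic differences are that the paper obtains the second expansion by rotating the local picture by $90^\circ$, which cancels four more of the eight single-switch terms than your coefficient-inversion bookkeeping does (harmless, since all eight have $\chi=n+1$ by the same local inspection), and that the four crossing signs need not sum to zero when the bands are not coherently oriented (the surface may be non-orientable and the link orientation is arbitrary) --- but a writhe change of $\pm 8$ multiplies the normalization by $(-A^{-3})^{\pm 8}$, which equals $1$ at $q=i$, so your conclusion stands.
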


\begin{proof}
  Let $S$ be an immersed ribbon surface of positive Euler characteristic $n = \chi(S)$.
  We reconsider Equation \eqref{eq:BandCrossingResolution},
  resolving a band crossing according to Kauffman's skein relation.
  The resolution for the changed band crossing is analogous, 
  with all diagrams rotated by $90^\circ$.  When we calculate 
  their difference, $10$ of the $16$ terms cancel each other
  in pairs, and we obtain the following skein relation:
  \begin{multline} \label{eq:BandCrossingChange}
    \pica{ribbon-crossing} - \pica{ribbon-crossing-vertical} = (A^{+4}-A^{-4}) 
    \left( \pica{ribbon-resolve-AAAA} - \pica{ribbon-resolve-BBBB} \right) \\
    + (A^{+2}-A^{-2})\left( \pica{ribbon-resolve-BAAA} - \pica{ribbon-resolve-BBAB} 
      + \pica{ribbon-resolve-AAAB} - \pica{ribbon-resolve-BABB} \right) .
  \end{multline}
  % This equation appears as Identity 3.2 in Kauffman \cite{Kauffman:1990a}.
  The two surfaces in the first parenthesis have Euler characteristic $n$,
  so their polynomials are divisible by $\bracket{\bigcirc^n} = (q^+ + q^-)^{n-1}$, 
  and the coefficient $q^{-2} - q^{+2} = (q^- + q^+) (q^- - q^+)$ contributes another factor.
  The four surfaces in the second parenthesis have Euler characteristic $n+1$, 
  so their polynomials are divisible by $\bracket{\bigcirc^{n+1}} = (q^+ + q^-)^n$.
  % so that the right hand side is divisible by $(q^+ + q^-)^n$.

  This means that $\bracket{D}$ modulo $(q^+ + q^-)^n$ 
  is invariant under band crossing changes as stated.  
  The writhe remains constant or changes by $\pm 8$.
  We conclude that the Jones polynomial $V(L)$ modulo $(q^+ + q^-)^n$ 
  is invariant under band crossing changes.  
\end{proof}

\subsection{Band twists} \label{sub:BandTwists}

Generalizing Example \ref{exm:SignOfDeterminant}, 
we obtain the following result:

\begin{proposition} \label{prop:TwistInvariance}
  \newcommand{\pic}[1]{\bigl[\raisebox{-1ex}{\includegraphics[height=3.2ex]{#1}}\bigr]}
  The surface determinant $[L/S]$ is invariant under band twisting,
  up to some sign factor $\varepsilon \in \{\pm 1, \pm i\}$.  More precisely, we have 
  \begin{align*}
    \pic{ribbon-otwist-3} &  = i \pic{ribbon-straight-2} = - \pic{ribbon-otwist-4}
    \qquad \text{and} \\
    \pic{ribbon-otwist-1} & = \pic{ribbon-otwist-2} = \varepsilon \pic{ribbon-straight} .
  \end{align*}
\end{proposition}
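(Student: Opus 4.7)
The strategy is to reduce each twist to a straight band by applying Kauffman's skein relation to the crossing(s) that make up the twist, and to show that all but one of the resulting resolutions dies after division by $(q^+ + q^-)^{n-1}$ and evaluation at $q=i$. For a single half-twist, one of the two Kauffman resolutions reproduces the straight band $S$ (Euler characteristic $n = \chi(S)$), while the other closes off the twist and yields a ribbon surface $S'$ of Euler characteristic $\chi(S') = n+1$. By Proposition \ref{prop:EulerJonesNullity}, the bracket $\bracket{D'}$ of the $S'$-resolution is divisible by $(q^+ + q^-)^n$, hence contributes one extra factor of $(q^++q^-)$ beyond the $(q^++q^-)^{n-1}$ that we divide out; that extra factor vanishes at $q=i$. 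The same dichotomy handles each crossing of a full twist: the resolution that preserves the band topology survives, while the resolution that raises $\chi$ dies.

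Once only the surviving resolution is left, the contribution is $A^{\pm 1}$ (or a product of such factors, in the full-twist case) times $\bracket{D_{\text{straight}}}$. Combined with the writhe correction $(-A^{-3})^{w(D)}$, the overall coefficient relating $[L/S_{\text{twisted}}]$ to $[L/S_{\text{straight}}]$ takes the form $\pm A^{2k}$ for some integer $k$ determined by the Kauffman smoothing and the shift in writhe. Substituting $q = -A^{-2}$ and then $q=i$ turns this into a fourth root of unity, i.e.\ a factor $\varepsilon \in \{\pm 1, \pm i\}$. In the half-twist case, the writhe shifts by $\pm 1$ relative to the straight band, which forces the exponent of $q$ to be odd, producing the factor $i$; changing the sign of the twist changes this to its inverse, which after multiplication by the $-1$ coming from a reversal of band orientation yields the claimed identity $\,i \cdot [\text{straight-}2] = -[\text{otwist-}4]$.

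The principal obstacle is bookkeeping of writhe and boundary orientation, since a half-twist may convert an orientable band into a M\"obius band and may permute or reverse boundary components. I would handle this by marking local orientations on each strand of the depicted bands, computing the writhe contribution of the twist crossing(s) directly from the signs induced by those orientations, and invoking Remark \ref{rem:Orientations} to absorb into $\varepsilon$ any sign that arises from reversing the orientation of a boundary component bounding a disk or M\"obius band. The four stated identities then reduce to the explicit computations of writhe shifts in each of the four local pictures, all of which reduce to elementary case analysis in $\Z[A^\pm]$ specialized at $q=i$.
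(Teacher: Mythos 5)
Your proposal is correct and follows essentially the same route as the paper: resolve the twist crossing via the Kauffman bracket, discard the splitting resolution because its higher Euler characteristic gives an extra factor of $(q^++q^-)$ that kills it at $q=i$ (Proposition \ref{prop:EulerJonesNullity}), and then track the writhe shift to identify the surviving coefficient as a fourth root of unity, using Remark \ref{rem:Orientations} to absorb orientation ambiguities into $\varepsilon$. The paper's proof is exactly this computation carried out explicitly for the parallel and anti-parallel cases.
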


For the last link we have to choose arbitrary orientations;
there is no canonical choice.

\begin{proof}
  \newcommand{\pic}[1]{\raisebox{-1ex}{\includegraphics[height=3.2ex]{#1}}}
  We have $\bracket{\pic{ribbon-twist-1}} 
  = A \bracket{\pic{ribbon-straight}} 
  + A^{-1} \bracket{\pic{ribbon-split}}$.
  The last term does not contribute to the surface determinant
  because it has greater Euler characteristic.
  The other two terms establish the desired equality 
  upon normalization with respect to the writhe.
  For parallel orientations we obtain:
  \begin{align*}
    \bigl[ \pic{ribbon-otwist-3} \bigr] 
    & = \bigl[ (-A^{-3})^{w+1} \bracket{\pic{ribbon-twist-1}}/(q^+ + q^-)^{\chi-1} \bigr]_{(q \mapsto i)}
    \\
    & = \bigl[ -A^{-2} \bigr]_{(q \mapsto i)} \cdot
    \bigl[ (-A^{-3})^w \bracket{\pic{ribbon-straight}}/(q^+ + q^-)^{\chi-1} \bigr]_{(q \mapsto i)} 
    = i \bigl[ \pic{ribbon-straight-2} \bigr] .
  \end{align*}
  We recall our sign convention $q = -A^{-2}$.
  For anti-parallel orientations we obtain:
  \begin{align*}
    \bigl[ \pic{ribbon-otwist-1} \bigr] 
    & = \bigl[ (-A^{-3})^{w-1} \bracket{\pic{ribbon-twist-1}}/(q^+ + q^-)^{\chi-1} \bigr]_{(q \mapsto i)}
    \\
    & = \bigl[ -A^{4} \bigr]_{(q \mapsto i)} \cdot
    \bigl[ (-A^{-3})^w \bracket{\pic{ribbon-straight}}/(q^+ + q^-)^{\chi-1} \bigr]_{(q \mapsto i)} 
    \\
    & = \bigl[ A^{8} \bigr]_{(q \mapsto i)} \cdot
    \bigl[ (-A^{-3})^{w+1} \bracket{\pic{ribbon-twist-2}}/(q^+ + q^-)^{\chi-1} \bigr]_{(q \mapsto i)} 
    = \bigl[ \pic{ribbon-otwist-2} \bigr] .
  \end{align*}
  The middle term can be identified with $\pm \bigl[ \pic{ribbon-straight-1} \bigr]$
  or $\pm i \bigl[ \pic{ribbon-straight-2} \bigr]$.  In general the sign depends on 
  the chosen orientations and the induced writhe of the resulting diagram.
\end{proof}

\begin{remark} \label{rem:DoubleTwist}
  Adding two full band twists leaves $[L/S]$ invariant.
  Of course, this can also be realized by a band crossing change,
  as in the following example: 
  \[
  \includegraphics[height=10ex]{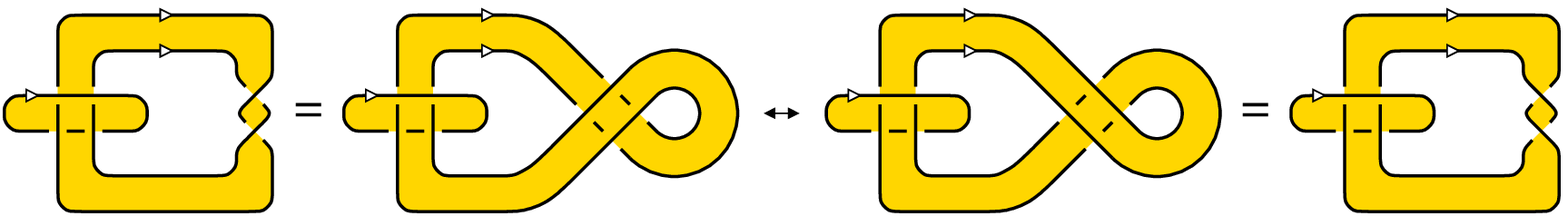}
  \]
\end{remark}

\begin{example} \label{exm:HopfLink}
  As a word of warning, the following example illustrates 
  that band twisting or a band crossing change 
  can alter the determinant of a link.  We consider 
  \[
  \newcommand{\pic}[1]{\raisebox{-3ex}{\includegraphics[height=8ex]{#1}}}
  S_- = \pic{hopf-} \qquad\text{and}\qquad S_+ = \pic{hopf+} .
  \]
  The boundary $H_\pm = \partial S_\pm$ is the Hopf link with 
  linking number $\lk(H_\pm) = \pm 1$ and determinant $\det H_\pm = \pm 2i$.
  Propositions \ref{prop:BandCrossingChange} and \ref{prop:TwistInvariance} 
  apply to the surface determinant $[L_\pm/S_\pm]$, but 
  the statement is empty because $[L_\pm/S_\pm] = 0$.
  See Remark \ref{rem:SurfaceDetermninant}.
\end{example}

\subsection{Orientable surfaces} \label{sub:OrientableSurfaces}

In order to simplify the exposition we will concentrate 
on \emph{orientable} surfaces.  This restriction seems
acceptable because we are ultimately interested in ribbon links.
All results extend to non-orientable surfaces as well, but statements 
and proofs are twice as long due to clumsy case distinctions.

\begin{definition}
  If $S$ is orientable then we define $[S] := [L/S]$ by choosing an arbitrary
  orientation of $S$ and the induced orientation of the boundary $L = \partial S$.
  This is well-defined according to Remark \ref{rem:Orientations}.
\end{definition}

\begin{proposition}
  For every orientable surface $S$ we have $[S] \in \Z$.
  % the determinant $[S]$ is an integer.
\end{proposition}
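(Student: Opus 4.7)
The plan is to exploit the Galois symmetry $V(L)|_{q \mapsto -q} = (-1)^{c(L)-1} V(L)$ of the Jones polynomial (the case $N=2$ in the remark following \fullref{thm:HOMFLYPT}), combined with the orientability of $S$. By \fullref{prop:EulerJonesNullity}, since $n = \chi(S) \ge 1$ (the case $\chi(S) \le 0$ being trivial by \fullref{rem:SurfaceDetermninant}), we can write $V(L) = (q^+ + q^-)^{n-1} \cdot \tilde V(L)$ with $\tilde V(L) \in \Z[q^\pm]$, and then $[S] = \tilde V(L)|_{q \mapsto i}$. The factor $q^+ + q^-$ is odd under $q \mapsto -q$, so $(q^+ + q^-)^{n-1}$ picks up a sign $(-1)^{n-1}$. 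Dividing out yields
\[
\tilde V(L)|_{q \mapsto -q} = (-1)^{c(L) - \chi(S)} \, \tilde V(L).
\]

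Next I would verify that, for an orientable surface $S$ with no closed components, the integer $c(L) - \chi(S)$ is always even. Decomposing $S = S_1 \sqcup \dots \sqcup S_m$ into its connected components, each $S_k$ is an orientable surface of some genus $g_k$ with $c_k \ge 1$ boundary circles, so $\chi(S_k) = 2 - 2g_k - c_k$. Summing,
\[
c(L) - \chi(S) \;=\; \sum_{k=1}^{m} \bigl( c_k - (2 - 2 g_k - c_k) \bigr) \;=\; \sum_{k=1}^{m} \bigl( 2 c_k + 2 g_k - 2 \bigr) \;\in\; 2\Z.
\]

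Combining the two previous steps, $\tilde V(L)$ is invariant under $q \mapsto -q$, which forces $\tilde V(L) \in \Z[q^{\pm 2}]$: only even exponents of $q$ occur. Evaluating at $q = i$ then replaces each surviving monomial $a_k q^{2k}$ by $a_k \cdot (-1)^k \in \Z$, so $[S] \in \Z$ as claimed. There is no real obstacle here: once the factorization of \fullref{prop:EulerJonesNullity} is in place, the argument is a short parity check, and the only content is the matching of the Galois sign $(-1)^{c-1}$ of the Jones polynomial with the sign $(-1)^{\chi - 1}$ coming from the denominator. The orientability hypothesis enters precisely to guarantee that these two signs agree.
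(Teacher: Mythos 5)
Your proof is correct and follows essentially the same route as the paper: both rest on the congruence $\chi(S)\equiv c(L)\bmod 2$ for orientable surfaces, the parity of $V(L)$ under $q\mapsto -q$, and the conclusion that the reduced polynomial is even and hence takes integer values at $q=i$. You have merely spelled out the details (the genus computation and the explicit Galois sign bookkeeping) that the paper leaves implicit.
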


\begin{proof}
  We have $\chi(S) \equiv c(L) \mod{2}$, where $c(L)$ 
  is the number of components of the link $L = \partial S$.
  The Jones polynomial $V(L)$ is even if $c(L)$ is odd,
  and $V(L)$ is odd if $c(L)$ is even.  The reduced polynomial 
  $V(L) / (q^+ + q^-)^{\chi-1} \in \Z[q^\pm]$ is always even.
\end{proof}

\begin{remark}
  By definition, $[S]$ depends only on the link $L = \partial S$ 
  and the Euler characteristic $\chi(S)$ of the surface $S$.
  According to Propositions \ref{prop:BandCrossingChange}
  and \ref{prop:TwistInvariance}, the value $[S]$ 
  does not depend on the situation of $S$ in $\R^3$, 
  but only on the abstract surface together with 
  the combinatorial pattern of ribbon singularities.
  This is rather surprising.
\end{remark}

\subsection{Surface invariants of finite type} \label{sub:FiniteTypeSurfaceInvariants}

In order to put the surface determinant into a wider perspective,
I would like to expound an interesting analogy with link invariants of finite type.  
A more comprehensive study of surface invariants of finite type 
will be the object of a forthcoming article \cite{Eisermann:SurfInv}.

\begin{remark}
  We expand the Jones polynomial $V(q) = \sum_{k=0}^\infty v_k h^k$ in $q = \exp(h/2)$.  
  Here any power series $q \equiv 1 + h/2 \mod{h^2}$ could be used: 
  the crucial point is that $q - q^{-1} \equiv h$ has no constant term.
  Then the link invariants $L \mapsto v_k(L)$ are of finite type in the sense 
  of Vassiliev \cite{Vassiliev:1990} and Goussarov \cite{Goussarov:1991}, 
  see also Birman--Lin \cite{BirmanLin:1993} and Bar-Natan \cite{BarNatan:1995}.  
  This means that these invariants behave polynomially with respect to crossing changes 
  $\raisebox{-0.7ex}{\includegraphics[height=2.5ex]{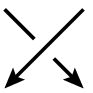}} \leftrightarrow 
  \raisebox{-0.7ex}{\includegraphics[height=2.5ex]{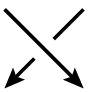}}$.
\end{remark}

\begin{remark}
  We can also expand $V(q) = \sum_{k=0}^\infty d_k h^k$ in $q = i \exp(h/2)$.  
  Any power series $q \equiv i + i h/2 \mod{h^2}$ could be used: 
  the crucial point is that $q + q^{-1} \equiv i h$ has no constant term.
  We obtain a family of link invariants $L \mapsto d_k(L)$ starting with 
  the classical link determinant $d_0(L) = V(L)|_{(q \mapsto i)} = \det(L)$. 
  The Jones nullity $\nu = \Null V(L)$ is the smallest index such that $d_\nu(L) \ne 0$.
  If $L$ bounds a surface $S$ of positive Euler characteristic $n$, 
  then $d_0(L)=\dots=d_{n-2}(L)=0$ and $d_{n-1}(L) = i^{n-1} [L/S]$. 
\end{remark}

The arguments used in the proofs of Propositions 
\ref{prop:BandCrossingChange} and \ref{prop:TwistInvariance}
motivate the following definition of alternating sums
of surfaces, imitating finite type invariants of links.

\begin{notation}
  As in \sref{sub:BandDiagrams} we consider a smooth compact surface $\Sigma$ 
  without closed components.  In order to simplify we assume $\Sigma$ 
  to be oriented and endow $\partial \Sigma$ with the induced orientation.
  We denote by $\BImmersions(\Sigma)$ the set of band immersions 
  $\Sigma \looparrowright \R^3$ modulo ambient isotopy.

  Let $D$ be a band diagram representing some ribbon surface $S \in \BImmersions(\Sigma)$
  and let $X$ be a set of band crossings of $D$.  For each subset $Y \subset X$ 
  we denote by $D_Y$ the diagram obtained from $D$ by changing all 
  band crossings $x \in Y$ as indicated in \fullref{fig:CrossingChange}a.  
  This does not change the abstract surface $\Sigma$, and so the diagram $D_Y$ 
  represents again a ribbon surface in $\BImmersions(\Sigma)$.

  \begin{figure}[h]
    \centering
    \subfigure[Band crossing]{\begin{minipage}[c][9ex][c]{20ex}\centering
        \newcommand{\pic}[1]{\raisebox{-3.5ex}{\includegraphics[height=8ex]{#1}}}
        $\pic{ribbon-crossing} \leftrightarrow \pic{ribbon-crossing-vertical}$
      \end{minipage}}
    \qquad
    \subfigure[Band twist]{\begin{minipage}[c][9ex][c]{20ex}\centering
        \newcommand{\pic}[1]{\raisebox{-1.5ex}{\includegraphics[height=4ex]{#1}}}
        $\pic{ribbon-twist-1} \leftrightarrow \pic{ribbon-twist-2}$
      \end{minipage}}
    \caption{Changing band crossings and band twists}
    \label{fig:CrossingChange}
  \end{figure}

  Slightly more generally, we also allow $X$ to contain band twists, 
  in which case we simply change one crossing as in \fullref{fig:CrossingChange}b. 
  We will usually not mention this explicitly but rather 
  subsume it under the notion of band crossing change.
  Of course, two full band twist can be traded for one 
  band crossing change, see Remark \ref{rem:DoubleTwist}.
\end{notation}

\begin{remark}
  We emphasize that we are considering links $L$ 
  equipped with extra structure, namely the given surface $S \subset \R^3$ 
  spanning $L = \partial S$.  This extra structure is crucial.
  Kauffman \cite[chapter V]{Kauffman:OnKnots} studied \emph{pass moves}, 
  which consist of the move of \fullref{fig:CrossingChange}a 
  without keeping track of surfaces.
  He shows that the set of knots splits into two equivalence classes,
  corresponding to the two values of the Arf invariant.
\end{remark}

\begin{remark}
  We assume that $\Sigma$ is a compact surface without closed components.
  Then any two embeddings $f,g \co \Sigma \into \R^3$ can be transformed 
  one into the other by a finite sequence of the above band crossing changes.
  The same holds true for ribbon immersions $f,g \co \Sigma \looparrowright \R^3$
  provided that the combinatorial structure of their singularities co\"incide.
\end{remark}

\begin{definition}
  Let $v \co \BImmersions(\Sigma) \to A$ 
  be a surface invariant with values in some abelian group $A$. 
  % be an invariant that associates to each ribbon surface 
  % $S \subset \R^3$ a value $v(S)$ in some abelian group $A$.  
  We say that $v$ is of \emph{degree $\le m$} 
  with respect to band crossing changes if
  \[
  \sum_{Y \subset X} (-1)^{|Y|} \, v(D_Y) = 0 
  \qquad\text{whenever}\qquad |X| > m .
  \]
  We say that $v$ is a \emph{surface invariant of finite type}
  if it is of degree $\le m$ for some $m \in \N$.
\end{definition}

\begin{remark}
  If $A$ is a module over a ring $\K$, 
  then the surface invariants $\BImmersions(\Sigma) \to A$
  of degree $\le m$ form a module over $\K$.
  If $A$ is an algebra over $\K$, then 
  the surface invariants $\BImmersions(\Sigma) \to A$
  of finite type form a filtered algebra over $\K$:
  if $f$ is of degree $\le m$ and $g$ is of degree $\le n$, 
  then their product $f \cdot g$ is of degree $\le m + n$.
\end{remark}

\begin{proposition} \label{prop:FiniteTypeSurfaceInvariants}
  The surface invariant $S \mapsto d_k(\partial S)$
  is of finite type with respect to band crossing changes.
  More precisely, it is of degree $\le m$ for $m = k + 1 - \chi(S)$.
\end{proposition}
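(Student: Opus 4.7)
The plan is to split the proof into three stages: a divisibility lemma for the Kauffman bracket under iterated band crossing changes, a writhe-correction step that transfers the divisibility to the Jones polynomial, and a routine translation of polynomial divisibility into vanishing of the coefficients $d_k$.

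First I would establish the combinatorial lemma that for any ribbon surface $S$ with $\chi(S) = n$ represented by a band diagram $D$, and any set $X$ of $m$ band crossings of $D$, the alternating sum $\sum_{Y \subset X}(-1)^{|Y|}\bracket{D_Y}$ lies in $(q^+ + q^-)^{n+m-1}\Z[A^{\pm}]$. The proof is by induction on $m$. The base case $m = 0$ is the bracket form of \fullref{prop:EulerJonesNullity}. For the inductive step I pick a crossing $x \in X$, set $X' = X \setminus \{x\}$, and pair subsets of $X$ as $\{Y', Y' \cup \{x\}\}$ to rewrite the alternating sum as
\[
\sum_{Y' \subset X'}(-1)^{|Y'|}\bigl(\bracket{D_{Y'}} - \bracket{D_{Y' \cup \{x\}}}\bigr).
\]
Applying identity \eqref{eq:BandCrossingChange} locally at $x$ expresses each bracket difference as a $\Z[A^\pm]$-linear combination of brackets of resolved diagrams; since the crossings in $X'$ are untouched by the local resolution at $x$, I may exchange the sum over $X'$ with the combination and apply the induction hypothesis to each resolved diagram. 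Resolutions weighted by $A^{\pm 4} - A^{\mp 4}$ produce surfaces of Euler characteristic $n$, whose inner alternating sums are divisible by $(q^+ + q^-)^{n+m-2}$ by induction, and the factorisation $A^{4} - A^{-4} = -(q^+ - q^-)(q^+ + q^-)$ supplies the missing factor. Resolutions weighted by $A^{\pm 2} - A^{\mp 2}$ produce surfaces of Euler characteristic $n + 1$, whose inner alternating sums are already divisible by $(q^+ + q^-)^{n+m-1}$.

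Next I pass from $\bracket{D_Y}$ to $V(L_Y) = (-A^{-3})^{w(D_Y)}\bracket{D_Y}$. A band crossing change at $x$ flips four classical crossings that share a common sign, because the two parallel strands of a band carry parallel local orientations; the writhe shift $\delta_x \in \{\pm 8\}$ therefore depends only on $x$, and $w(D_Y) = w(D) + \sum_{x \in Y}\delta_x$. Writing $\epsilon_Y := (-A^{-3})^{w(D_Y) - w(D)} = \prod_{x \in Y}\epsilon_x$ with $\epsilon_x = q^{\pm 12}$, and noting that $q^{12} - 1$ is divisible by $q^{2} + 1 = q(q^+ + q^-)$, each $\alpha_x := \epsilon_x - 1$ lies in $(q^+ + q^-)\Z[q^\pm]$. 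Expanding $\epsilon_Y = \sum_{Z \subset Y}\alpha_Z$ with $\alpha_Z := \prod_{x \in Z}\alpha_x \in (q^+ + q^-)^{|Z|}\Z[q^\pm]$ and reindexing gives
\[
\sum_{Y \subset X}(-1)^{|Y|}V(L_Y) = (-A^{-3})^{w(D)}\sum_{Z \subset X}(-1)^{|Z|}\alpha_Z\sum_{Y' \subset X \setminus Z}(-1)^{|Y'|}\bracket{D_{Z \cup Y'}}.
\]
For each $Z$, the inner alternating bracket sum runs over the $m - |Z|$ band crossings of $X \setminus Z$ with the same ambient surface $S$, so Stage~1 gives divisibility by $(q^+ + q^-)^{n + m - |Z| - 1}$; multiplying by $\alpha_Z$ completes the divisibility to $(q^+ + q^-)^{n + m - 1}$.

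The final stage is automatic. Since $q + q^{-1} = ih + O(h^{3})$ at $q = i\exp(h/2)$, divisibility of $\sum_Y(-1)^{|Y|}V(L_Y)$ by $(q^+ + q^-)^{n + m - 1}$ forces the coefficients $\sum_Y(-1)^{|Y|}d_k(L_Y)$ to vanish for every $k \le n + m - 2$; hence as soon as $|X| = m + 1 > k + 1 - \chi(S)$ the alternating sum of $d_k(\partial S)$ vanishes, establishing the degree bound $\le k + 1 - \chi(S)$. I expect the main obstacle to be the inductive step of Stage~1: one must verify from \eqref{eq:BandCrossingChange} that the two groups of resolved surfaces indeed have Euler characteristics $n$ and $n + 1$ respectively, and that the explicit coefficients conspire with the induction hypothesis to deliver exactly the gain of $(q^+ + q^-)$ required at each step. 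Stage~2 is conceptually clean but depends on the above double-sum manipulation to prevent the writhe factors from spoiling the divisibility obtained in Stage~1.
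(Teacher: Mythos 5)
Your argument is essentially the paper's: the same induction on $|X|$, resolving one crossing of $X$ via the identity \eqref{eq:BandCrossingChange}, with the same Euler-characteristic bookkeeping (the $A^{\pm4}-A^{\mp4}$ terms keep $\chi$ and contribute a factor $q^++q^-$ through their coefficient, the $A^{\pm2}-A^{\mp2}$ terms raise $\chi$ by one), followed by the routine translation of divisibility by $(q^++q^-)^{|X|+\chi(S)-1}$ into vanishing of $d_k$. The only genuine divergence is your Stage~2, and there your justification is wrong even though the conclusion survives: for an oriented surface with the induced boundary orientation the two edges of a band are \emph{antiparallel}, not parallel, so the four crossing signs at a band crossing cancel in pairs and a band crossing change leaves the writhe \emph{unchanged} ($\delta_x=0$, not $\pm8$); this is exactly why the paper puts the strands into the antiparallel configuration and lets the bracket identity translate verbatim into Equation \eqref{eq:FiniteTypeSurfaceInvariants} for $V$, making any writhe correction unnecessary. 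Your double-sum correction happens to remain valid because the shift is in all cases a multiple of $8$, so $\epsilon_x-1\in(q^++q^-)\Z[q^\pm]$ either trivially or via $q^{12}-1$; but as written it solves a problem that does not arise in the setting of the proposition. Finally, note that the paper's notion of band crossing change subsumes band twists (\fullref{fig:CrossingChange}b), and its proof of Lemma \ref{lem:FiniteTypeSurfaceInvariants} treats that case separately with the skein relation \eqref{eq:HOMFLYPT}; your induction should say a word about this case as well, though it is easily supplied.
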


In the case $m<0$ we have $k < \chi(S)-1$, whence 
$d_k(\partial S) = 0$ by Proposition \ref{prop:EulerJonesNullity}.
For $m=0$ we have $k = \chi(S)-1$, whence $d_k(\partial S)$ 
is invariant under band crossing changes
by Proposition \ref{prop:BandCrossingChange}.
Proposition \ref{prop:FiniteTypeSurfaceInvariants}
extends these results in a natural way to all $k \in \N$.
It is a consequence of the following observation:

\begin{lemma} \label{lem:FiniteTypeSurfaceInvariants}
  Consider an oriented band diagram $D$. 
  % representing an oriented ribbon surface $S$.
  For every set $X$ of band crossings, the polynomial
  $\sum_{Y \subset X} (-1)^{|Y|} \, V(\partial D_Y)$
  is divisible by $(q^+ + q^-)^{|X| + \chi(S) - 1}$.
\end{lemma}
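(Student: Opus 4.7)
The plan is to prove this by induction on $|X|$, using equation~(\ref{eq:BandCrossingChange}) as the engine for the inductive step.

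\emph{Base case.} When $|X| = 0$, the alternating sum collapses to $V(\partial D)$, and divisibility by $(q^+ + q^-)^{\chi(S) - 1}$ is exactly \fullref{prop:EulerJonesNullity}.

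\emph{Inductive step.} Fix any $x \in X$ and set $X' := X \setminus \{x\}$. Pair up subsets by their intersection with $\{x\}$:
\[
\sum_{Y \subset X} (-1)^{|Y|}\, V(\partial D_Y) \;=\; \sum_{Y \subset X'} (-1)^{|Y|}\, \bigl[V(\partial D_Y) - V(\partial D_{Y \cup \{x\}})\bigr].
\]
For each $Y$, apply the band crossing change identity~(\ref{eq:BandCrossingChange}) locally at $x$. The six ribbon surfaces appearing on the right-hand side differ from $D_Y$ only in a neighbourhood of $x$; in particular, all remaining band crossings $X'$ are preserved, and we may view them as a family $T_{Y,j}$ ($j=1,\dots,6$) indexed so that $(T_{\emptyset,j})_Z = T_{Z,j}$ for $Z \subset X'$. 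Two of the $T_j$ have $\chi(T_j) = \chi(S)$ and appear with coefficient $\pm(A^{+4} - A^{-4})$; the other four have $\chi(T_j) = \chi(S)+1$ and coefficient $\pm(A^{+2} - A^{-2})$.

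Interchanging the two summations and applying the induction hypothesis to each $T_j$ with the smaller set $X'$ of band crossings yields
\[
\sum_{Y \subset X'} (-1)^{|Y|}\, V(\partial (T_j)_Y) \;\equiv\; 0 \pmod{(q^+ + q^-)^{|X'| + \chi(T_j) - 1}}.
\]
Under the sign convention $q = -A^{-2}$ we have $A^{+2} + A^{-2} = -(q^+ + q^-)$, so the factorisation $(A^{+4} - A^{-4}) = (A^{+2} - A^{-2})(A^{+2} + A^{-2})$ contributes an extra factor $(q^+ + q^-)$ for the first pair of terms. Combining: the two terms with $\chi(T_j) = \chi(S)$ give divisibility by $(q^+ + q^-)^{|X'| + \chi(S) - 1 + 1} = (q^+ + q^-)^{|X| + \chi(S) - 1}$, while the four terms with $\chi(T_j) = \chi(S)+1$ give divisibility by $(q^+ + q^-)^{|X'| + \chi(S)} = (q^+ + q^-)^{|X| + \chi(S) - 1}$. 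Both contributions match, completing the induction.

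\emph{Main obstacle.} The identity~(\ref{eq:BandCrossingChange}) is written for the Kauffman bracket of unoriented diagrams, whereas the lemma concerns the Jones polynomial $V(\partial D_Y)$ of the oriented boundary. Changing the band crossing at $x$ flips four crossings between coherently oriented strands, altering the writhe of $\partial D_Y$ by $\pm 8$, so the normalizing factors $(-A^{-3})^{w(D_Y)}$ are not uniform across the six auxiliary diagrams. However, these writhe corrections amount only to unit factors $A^{\pm 24} \in \Z[A^\pm]^\times$, which do not affect divisibility by $(q^+ + q^-) = -(A^{+2} + A^{-2})$. The cleanest way to handle this bookkeeping is to first establish the bracket-polynomial analogue of the lemma by the same induction (where the skein identity applies literally), and then translate it to the Jones polynomial using that each writhe-normalization factor is a unit modulo $(q^+ + q^-)$; this is the only genuinely technical step.
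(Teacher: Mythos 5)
Your induction is the paper's: base case from Proposition \ref{prop:EulerJonesNullity}, inductive step by resolving one element of $X$ via the band-crossing-change identity, and the same bookkeeping of Euler characteristics and coefficient factors ($A^{+4}-A^{-4}$ contributing one extra power of $q^++q^-$, the other four terms gaining it from $\chi+1$). That part is correct. But the ``main obstacle'' you isolate at the end is misdiagnosed, and your proposed fix for it does not work. In an \emph{oriented} band diagram every band has antiparallel boundary strands, so the four crossings of a band crossing carry signs $s,s,-s,-s$ and contribute zero net writhe both before and after the change; after inserting cancelling twist pairs as in \sref{sub:OrientedSkein}, the six resolutions can likewise be arranged to have the same writhe. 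This is exactly how the paper converts the bracket identity \eqref{eq:BandCrossingChange} into the oriented identity \eqref{eq:FiniteTypeSurfaceInvariants} for $V$ itself, with no correction factors at all. The $\pm 8$ writhe shift you describe occurs only for coherently oriented (i.e.\ non-orientable-band) strands, which the hypothesis excludes.

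The reason this matters is that your fallback argument is not valid: knowing that $\sum_Y(-1)^{|Y|}\bracket{D_Y}$ is divisible by $(q^++q^-)^m$ does not imply the same for $\sum_Y(-1)^{|Y|}u_Y\bracket{D_Y}$ when the units $u_Y=(-A^{-3})^{w(D_Y)}$ vary with $Y$. Writing $u_Y=u(1+(A^{-24\epsilon_Y}-1))$, the correction terms $(A^{-24\epsilon_Y}-1)\bracket{D_Y}$ are divisible by $(q^++q^-)$ only once beyond the $(q^++q^-)^{\chi(S)-1}$ of each individual bracket (the primitive eighth roots of unity are simple roots of $A^{24}-1$), so they fall short of $(q^++q^-)^{|X|+\chi(S)-1}$ as soon as $|X|\ge 2$. ``Unit modulo $q^++q^-$'' does not preserve higher-order divisibility term by term; you genuinely need the writhes to coincide, which they do. Separately, note that the paper's conventions subsume band twists under band crossing changes, so a complete proof must also treat the case where $x$ is a band twist (where the writhe \emph{does} change, by $\pm2$); the paper handles that case directly with the oriented skein relation \eqref{eq:HOMFLYPT} rather than the bracket.
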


\begin{proof}
  We proceed by induction on the cardinality of $X$.
  The case $|X|=0$ is settled by Proposition \ref{prop:EulerJonesNullity}.
  If $|X| \ge 1$ then we choose one band crossing or band twist $x \in X$.
  In the first case we apply Equation \eqref{eq:BandCrossingChange}.
  The orientations of the vertical and horizontal strands 
  are antiparallel, so we can put them into the following configuration:
  \newcommand{\skein}[1]{V\Bigl(\raisebox{-2ex}{\includegraphics[height=5ex]{#1}}\Bigr)}
  \begin{multline} \label{eq:FiniteTypeSurfaceInvariants}
    \skein{ribbon-crossing-1} - \skein{ribbon-crossing-vertical-1} = (q^- - q^+)(q^+ + q^-)
    \left[ \skein{ribbon-resolve-AAAA-1} - \skein{ribbon-resolve-BBBB-1} \right] \\
    + (q^+ - q^-)\left[ \skein{ribbon-resolve-BAAA-1} - \skein{ribbon-resolve-BBAB-1} 
      + \skein{ribbon-resolve-AAAB-1} - \skein{ribbon-resolve-BABB-1} \right] .
  \end{multline}
  The diagrams so obtained have the same writhe, and thus 
  Equation \eqref{eq:BandSingularityRemoval} for the Kauffman bracket 
  directly translates to Equation \eqref{eq:FiniteTypeSurfaceInvariants}
  for the Jones polynomial.  On the right hand side the first two terms 
  have the same Euler characteristic as $S$ but one extra factor $(q^++q^-)$, 
  whereas in the last four terms the Euler characteristic increases by one.

  The second case is analogous: if $x$ is a band twist, then Equation \eqref{eq:HOMFLYPT} yields
  \renewcommand{\skein}[1]{V\bigl(\raisebox{-1ex}{\includegraphics[height=3ex]{#1}}\bigr)}
  \begin{align}
    \skein{ribbon-otwist-2} - \skein{ribbon-otwist-1}
    & =
    \left[ \skein{ribbon-otwist-2} - q^4 \skein{ribbon-otwist-1} \right]
    + ( q^4-1 ) \skein{ribbon-otwist-1} 
    \\ \notag
    & = (q^1-q^3) \skein{ribbon-osplit}
    + (q^3-q^1)(q^++q^-) \skein{ribbon-otwist-1} 
  \end{align}

  In both cases we pass to the alternating sum over all 
  subsets $Y$ of $X' = X \minus \{x\}$.  On the left hand side 
  we obtain the alternating sum over all subsets of $X$, as desired.  
  On the right hand side we apply the induction hypothesis
  to conclude that the resulting polynomial is divisible 
  by $(q^+ + q^-)^{|X| + \chi(S) - 1}$.
\end{proof}

\begin{remark}
  Every link invariant $L \mapsto v(L)$ of degree $\le m$ 
  (with respect to crossing changes) induces a surface invariant 
  $S \mapsto v(\partial S)$ of degree $\le m$
  (with respect to band crossing changes).  
  This holds, for example, for the coefficients $v_k$ in 
  the above expansion $V(q) = \sum_{k=0}^\infty v_k h^k$ in $q = \exp(h/2)$.
  It is surprising that the expansion $V(q) = \sum_{k=0}^\infty d_k h^k$ 
  in $q = i \exp(h/2)$ provides an independent family of examples, 
  even though the link invariants $d_k$ are \emph{not} 
  of finite type with respect to crossing changes.  
  
  The determinant $d_0(L) = \det(L)$ comes close to being 
  a Vassiliev--Goussarov invariant in the sense that $\det(L)^2$ 
  is polynomial of degree $\le 2$ on every twist sequence,
  see Eisermann \cite[\textsection 5]{Eisermann:2003}.
  Here $\det(\partial S)$ turns out to be of degree $\le 1-\chi(S)$
  with respect to band crossing changes of the surface $S$.
\end{remark}

\begin{remark} \label{rem:AlexanderBandCrossing}
  We parametrize the Alexander--Conway polynomial 
  $\Delta(L) = \sum a_k(L) z^k$ by $z = q^+ - q^-$.
  The link invariant $L \mapsto a_k(L)$ is then 
  of degree $k$ with respect to crossing changes.  
  If we consider a disk $\Sigma$ and band immersions
  $\Sigma \looparrowright \R^3$, then the surface invariant
  $S \mapsto a_k(\partial S)$ is of degree $0$
  with respect to band crossing changes.
  To see this, notice that the Seifert matrix of the knot $K = \partial S$ 
  has the form $\theta = \bigl(\begin{smallmatrix} 0 & A \\ B & C \end{smallmatrix}\bigr)$,
  see Kauffman \cite[chapter VIII]{Kauffman:OnKnots}. % and Rolfsen \cite[\textsection 8E]{Rolfsen:1990}.
  This implies that $\Sign(K) = \Sign(\theta+\theta^*)$ vanishes and 
  that $\Delta(K) = \det( q^- \, \theta^* - q^+ \, \theta )$ 
  is of the form $f(q^+) \cdot f(q^-)$ with $f \in \Z[q^\pm]$. 
  Band crossing changes of $S$ only affect the submatrix $C$,
  and so $\Delta(K)$ remains unchanged.

  If we pass from the special case of a disk 
  to immersions or embeddings of an arbitrary surface $\Sigma$,
  then the surface invariant $S \mapsto a_k(\partial S)$ is no longer 
  invariant under band crossing changes.  Example \ref{exm:HopfLink} illustrates
  this for the linking number $a_1 = \lk$ when $\Sigma$ is an annulus.
\end{remark}

%%%%%%%%%%%%%%%%%%%%%%%%%%%%%%%%%%%%%%%%%%%%%%%%%%%%%%%%%%%%%%%%%%%%%%%%%%%%%

\section{The Jones determinant of ribbon links} \label{sec:JonesDeterminant}

The surface determinant $[S]$ is invariant under 
band crossing changes, but in general it changes when 
we replace a ribbon singularity by a band crossing.
In order to analyze this in more detail, we spell out
an oriented skein relation (\sref{sub:OrientedSkein}) and
establish some useful congruences (\sref{sub:Congruences}).
We then apply them to ribbon links (\sref{sub:RibbonLinks}) and 
prove \fullref{Thm:MultiplicativityMod32} stated in the introduction.
(The arguments remain elementary but get increasingly complicated,
because our combinatorial approach entails numerous case distinctions.)
Finally we sketch an application to satellites of ribbon knots
(\sref{sub:RibbonSatellites}).

\subsection{An oriented skein relation} \label{sub:OrientedSkein}

We wish to set up a suitable skein relation 
for the determinant $[S]$ of an orientable ribbon surface $S$.
Replacing a ribbon singularity by a band crossing 
as in Equation \eqref{eq:BandSingularityRemoval},
we obtain a ribbon surface $S'$ with one less singularity.
The right hand side of \eqref{eq:BandSingularityRemoval} 
features six diagrams: the first two of these terms 
vanish at $q=i$ because they have greater Euler characteristic.
Hence Equation \eqref{eq:BandSingularityRemoval} becomes
\begin{equation} \label{eq:Determinant}
  \newcommand{\pic}[2]{\biggl[\underset{#2}{%
      \raisebox{-2.5ex}{\includegraphics[height=6ex]{#1}}}\biggr]}
  \pic{ribbon-singularity-1}{S} - \pic{ribbon-crossing-1}{S'} = 
  -2 \biggl( \pic{ribbon-resolve-BBAA-1}{S_1} - \pic{ribbon-resolve-AAAA-1}{S_2} 
  + \pic{ribbon-resolve-AABB-1}{S_3} - \pic{ribbon-resolve-BBBB-1}{S_4} \biggr) .
\end{equation}

Notice that the orientations of the vertical and horizontal strands
are antiparallel, and the writhe of the shown crossings in $S$ and $S'$ 
add up to $0$.  Inserting pairs of opposite twists as necessary, 
we can always put the bands into the configuration shown in \eqref{eq:Determinant}.
This has the advantage that we can use the same orientations on the right hand side.
All diagrams have the same writhe, so that Equation \eqref{eq:BandSingularityRemoval} 
for the Kauffman bracket directly translates to the Jones polynomial, and to 
Equation \eqref{eq:Determinant} for the surface determinant.

\subsection{Some useful congruences} \label{sub:Congruences}
  
We continue to consider an orientable ribbon surface $S$.
We denote by $c(S)$ the number of its connected components.
Since each component has Euler characteristic $\le 1$,
the deficiency $d(S) = c(S) - \chi(S)$ is non-negative,
and we have $d(S) = 0$ if and only if $S$ consists only of disks.
In the following induction the deficiency $d(S)$ measures 
how far $S$ is from being a collection of disks.

\begin{definition}
  We call a ribbon singularity \emph{essential} if 
  the pierced component is a disk and the piercing component 
  remains connected after cutting it open along the singularity.
  We denote by $e(S)$ the number of essential singularities of $S$.
\end{definition}

\begin{lemma} \label{lem:Congruence}
  Every oriented ribbon surface $S \subset \R^3$
  satisfies the following congruences:
  \begin{enumerate}
  \item[(1)]
    If $d(S)=0$, then $[S] \equiv 1 \mod{8}$.
  \item[(2)]
    If $d(S)=1$, then $[S] \equiv 4 e(S) \mod{8}$.
  \item[(3)]
    If $d(S)\ge2$, then $[S] \equiv 0 \mod{2^{d+1}}$.
  \end{enumerate}
\end{lemma}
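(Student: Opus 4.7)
The plan is to prove Lemma \ref{lem:Congruence} by induction on the number $r(S)$ of ribbon singularities, handling the three congruences in parallel.

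For the base case $r(S) = 0$, the surface $S$ is embedded in $\R^3$ and its components may be isotoped into pairwise disjoint balls, so $L = \partial S$ is a completely split link. Iterating $V(L_1 \sqcup L_2) = (q^+ + q^-) V(L_1) V(L_2)$ yields $V(\partial S) = (q^+ + q^-)^{c(S)-1} \prod_j V(\partial S_j)$, so that
\begin{equation*}
[S] = \Bigl[(q^+ + q^-)^{d(S)} \prod_j V(\partial S_j)\Bigr]_{q = i}.
\end{equation*}
When $d(S) = 0$ every component is a disk with trivial boundary, giving $[S] = 1$; when $d(S) \geq 1$ the factor $(q^+ + q^-)^{d(S)}$ vanishes at $q = i$, giving $[S] = 0$. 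Since $e(S) = 0$ in the absence of singularities, all three assertions hold at this base.

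For the inductive step with $r(S) \geq 1$, I would pick a ribbon singularity and apply the skein relation \eqref{eq:Determinant}, rewritten as
\begin{equation*}
[S] = [S'] - 2\bigl([S_1] - [S_2] + [S_3] - [S_4]\bigr).
\end{equation*}
The surface $S'$ has the same abstract surface as $S$ (so $c$, $\chi$, and $d$ are preserved) with the chosen singularity replaced by a band crossing, hence $r(S') = r(S) - 1$, and $e(S') = e(S) - \varepsilon$ where $\varepsilon = 1$ if the singularity was essential and $\varepsilon = 0$ otherwise (essentialness of the remaining singularities depends only on the abstract surface, which is unchanged). The inductive hypothesis thus gives the appropriate congruence for $[S']$. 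Each resolution $S_i$ has $\chi(S_i) = \chi(S)$ and $r(S_i) = r(S) - 1$, but its component count varies according to how the two bands at the singularity reconnect under the $i$th smoothing.

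The heart of the argument is then a case analysis of $d(S_i)$ as the singularity type (pure vs.\ mixed, essential vs.\ not) varies. In the generic situations the four smoothings each increase the deficiency by at least two, so by case (3) of the inductive hypothesis each $[S_i] \equiv 0 \pmod{2^{d(S)+3}}$, making the correction $-2\sum_i(\pm[S_i])$ divisible by $2^{d(S)+4}$; this immediately propagates cases (1) and (3). Case (2) is the most delicate: when the removed singularity is essential, a careful local count should show that exactly two of the $S_i$ carry $d = 0$ (each contributing $\equiv 1 \pmod 8$ by case (1) of the hypothesis), and the signs in the alternating sum produce $\sigma := [S_1] - [S_2] + [S_3] - [S_4] \equiv \pm 2 \pmod 4$, so that $-2\sigma \equiv \pm 4 \pmod 8$ matches the shift $4(e(S) - e(S')) = 4$; for a non-essential singularity the $d = 0$ resolutions either cancel in the alternating sum or do not appear at all, giving $\sigma \equiv 0 \pmod 4$.

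The main obstacle will be precisely this geometric case analysis at the singularity, especially verifying the alternating-sum congruence in case (2) through the interplay between essentialness and the combinatorics of the four band resolutions. Two auxiliary tools help to manage the bookkeeping: Proposition \ref{prop:BandCrossingChange} allows one to normalize $S$ by band crossing changes before inducting, thereby reducing the number of diagrammatic configurations to consider, and Proposition \ref{prop:TwistInvariance} handles the antiparallel-orientation configuration required by the skein \eqref{eq:Determinant} via compensating band twists.
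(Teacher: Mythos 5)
Your overall strategy---induction on the ribbon number via the skein relation \eqref{eq:Determinant}, with a case analysis on the deficiencies $d(S_i)$ of the four resolutions---is exactly the paper's. But the quantitative claim driving your propagation of cases (1) and (3) is false. You assert that ``in the generic situations the four smoothings each increase the deficiency by at least two,'' so that each $[S_i]\equiv 0 \pmod{2^{d(S)+3}}$. In fact all four resolutions $S_1,\dots,S_4$ appearing in \eqref{eq:Determinant} have the \emph{same} Euler characteristic as $S$, and their component count differs from $c(S)$ by at most one; hence $d(S_i)-d(S)\in\{-1,0,+1\}$, never $\ge 2$. In particular, when $d(S)=0$ every resolution is again a collection of disks, possibly with one extra annulus, so $[S_i]\equiv 1\pmod 8$ or $[S_i]\equiv 0\pmod 4$---nowhere near divisible by $2^{3}$. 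Case (1) is propagated not by each term vanishing modulo a high power of $2$ but by \emph{cancellation in the alternating sum}: for a mixed singularity all four terms are $\equiv 1\pmod 8$ and cancel; for a pure singularity exactly two resolutions acquire an annulus, and one must check that these two occur with opposite signs in $[S_1]-[S_2]+[S_3]-[S_4]$. Likewise in case (3) you only get $d(S_i)\ge d(S)-1$, hence $[S_i]\equiv 0\pmod{2^{d}}$, and it is the explicit factor $-2$ in \eqref{eq:Determinant} that supplies exactly the required $2^{d+1}$---there is no room to spare, and certainly not $2^{d+4}$.

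Your sketch of case (2) is the part that is essentially correct, and tellingly it is the only place where you allow the deficiency to \emph{drop}: an essential singularity is precisely one for which two of the four resolutions have $d=0$ (each $\equiv 1\pmod 8$) while the other two keep $d=1$, and the sign pattern gives $[S_1]-[S_2]+[S_3]-[S_4]\equiv 2\pmod 4$, hence the shift by $4$. The remaining work---enumerating the non-essential subcases (pure singularity; splitting separates both components; splitting separates the piercing but not the pierced component) and verifying $\sigma\equiv 0\pmod 4$ with the correct pairing of signs in each---is the actual content of the lemma and is what you have deferred. A smaller flaw: in the base case $r(S)=0$ an embedded surface need not split into disjoint balls (two disjoint embedded annuli can be linked, as in the $2$--cable of the Hopf link), so your product formula for $V(\partial S)$ is unjustified; what is true is that the $\chi(S)$ disk components split off as unknots, which already yields $[S]=0$ when $d(S)\ge 1$ and $[S]=1$ when $d(S)=0$.
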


\begin{remark}
  The ribbon condition improves the usual congruences by a factor $2$: 
  in general we only have $\det K \equiv 1 \mod{4}$ 
  for a knot and $\det L \equiv 0 \mod{2}$ for a link.

  Case (1) includes the well-known fact that every 
  ribbon knot $K$ satisfies $\det(K) \equiv 1 \mod{8}$.
  This classical result is reproved in our more 
  general setting for ribbon links.

  Case (2) could be reduced to $[S] \equiv 0 \mod{4}$,
  but the refinement modulo $8$ will prove indispensable 
  in order to establish \fullref{Thm:MultiplicativityMod32}
  (see Theorem \ref{thm:CongruenceMod32} below).

  Case (3) could likewise be strengthened,
  but we content ourselves with a weaker formulation 
  that suffices for the inductive proof of Lemma \ref{lem:Congruence}.
  % Apart from this technical purpose it will not be used in the sequel.  
\end{remark}

\begin{proof}[Proof of Lemma \ref{lem:Congruence}]
  We first recall that we assume the surface $S$ 
  to be non-empty and without closed components.
  We also remark that the case $\chi(S) \le 0$ is trivial,
  because $d(S) \ge 1$ and $[S]=0$ by definition.
  In the sequel we can thus assume $\chi(S) \ge 1$.

  We proceed by induction on the number $r(S)$ of ribbon singularities.
  Suppose first that $r(S) = 0$.  
  If $d(S) = 0$, then $S = \bigcirc^\chi$ with $\chi = \chi(S)$, whence $[S] = 1$.  
  If $d(S) \ge 1$ then $S = S_0 \sqcup \bigcirc^\chi$ with $S_0 \ne \emptyset$, 
  whence $V(L) = (q^+ + q^-)^\chi \, V(L_0)$ and $[S] = 0$.  

  For the induction step we suppose that $r(S) \ge 1$
  and that the statement is true for all surfaces $S'$ with $r(S') < r(S)$.
  We then replace a ribbon singularity of $S$ by a band crossing 
  as in Equation \eqref{eq:Determinant}.
  By our induction hypothesis, we can apply the congruences
  stated above to the surface $S',S_1,S_2,S_3,S_4$. 
  All surfaces have the same Euler characteristic as $S$
  but the number of components may differ: we have 
  $c(S) = c(S')$ and $c(S_i) - c(S) \in \{ 1,0,-1 \}$.

  We denote by $S_\Horz$ resp.\ $S_\Vert$ the component the surface $S$
  containing the horizontal resp.\ vertical strip in Equation \eqref{eq:Determinant}.
  In order to analyze the contribution of the four ribbon surfaces 
  $S_1,S_2,S_3,S_4$ we distinguish the following cases.
  
  \textbf{Case (1)} \enspace
  If $d(S)=0$, then we are dealing exclusively with disks:
  \begin{enumerate}
  \item[(a)]
    If $S_\Horz$ and $S_\Vert$ are different disks of $S$,
    then all four diagrams $S_1,S_2,S_3,S_4$ feature only disks, 
    whence $d(S_1) = d(S_2) = d(S_3) = d(S_4) = 0$.
    We thus have $[S_1] \equiv [S_2] \equiv [S_3] \equiv [S_4] \equiv 1 \mod{8}$,
    whence $[S_1] - [S_2] + [S_3] - [S_4] \equiv 0 \mod{8}$.
    The factor $-2$ in Equation \eqref{eq:Determinant}
    ensures that $[S] \equiv [S'] \mod{16}$.
    
  \item[(b)]
    Suppose next that $S_\Vert$ co\"incides with $S_\Horz$.
    For concreteness we will assume that the western and southern 
    pieces are connected outside of the local picture,
    as indicated in \fullref{fig:RibbonResolveSubtlety1}.
    (The other three variants are analogous.)

    \begin{figure}[h]
      \centering
      \includegraphics[width=\linewidth]{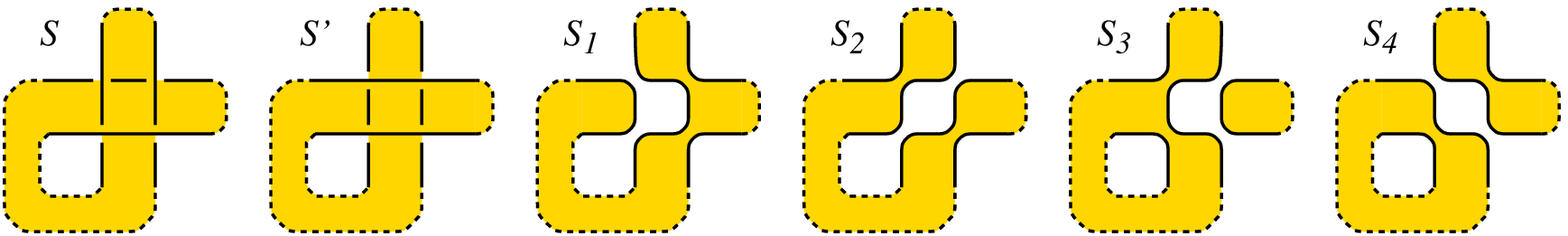}
      \caption{Resolving a pure ribbon singularity}
      \label{fig:RibbonResolveSubtlety1}
    \end{figure}
    
    Two diagrams, in our case $S_1$ and $S_2$, 
    feature only disks, whence $d(S_1) = d(S_2) = 0$
    and $[S_1] \equiv [S_2] \equiv 1 \mod{8}$.
    The other two diagrams, in our case $S_3$ and $S_4$,  
    each feature one extra annulus, whence $d(S_3) = d(S_4) = 1$,
    whence $[S_3] \equiv [S_4] \equiv 0 \mod{4}$.
    Equation \eqref{eq:Determinant} implies 
    that $[S] \equiv [S'] \mod{8}$.

    \begin{remark} \label{rem:DeltaMod16}
      For future reference, we wish to be more precise here. % in case (1b).
      The surfaces $S_3$ and $S_4$ may have different numbers 
      of essential singularities, so $[S_4] - [S_3] = 4 \Delta \mod{8}$.
      We conclude that $[S] - [S'] \equiv 8\Delta \mod{16}$:
      if $\Delta$ is even, then $[S] \equiv [S'] \mod{16}$;
      if $\Delta$ is odd, then $[S] - [S'] \equiv 8 \mod{16}$.
      
      The difference $\Delta = e(S_4) - e(S_3)$ is the number 
      of times that the annulus formed by connecting the western 
      and southern pieces, essentially pierces the northern piece:
      these singularities are essential for $S_4$ but not essential for $S_3$.
      All other essential singularities are the same for both $S_3$ and $S_4$.
    \end{remark}
    
  \end{enumerate}

  \textbf{Case (2)} \enspace
  If $d(S)=1$, then we are dealing with $n$ disks and one annulus.  
  % Four cases require inspection:
  
  \begin{enumerate}
  \item[(a)]
    If the components $S_\Horz$ and $S_\Vert$ co\"incide,
    then $d(S_1) = d(S_2) = d(S_3) = d(S_4) \ge 1$, whence 
    $[S_1] \equiv [S_2] \equiv [S_3] \equiv [S_4] \equiv 0 \mod{4}$.
    In this case $[S] \equiv [S'] \mod{8}$.  The considered
    singularity is not essential, so that $e(S) = e(S')$.
  \end{enumerate}
 
  In the following cases we assume that 
  $S_\Horz$ and $S_\Vert$ are different components.
  \begin{enumerate}
  \item[(b)]
    If splitting separates both $S_\Horz$ and $S_\Vert$, then 
    $d(S_1) = d(S_2) = d(S_3) = d(S_4) = 1$, whence 
    $[S_1] \equiv [S_2] \equiv [S_3] \equiv [S_4] \equiv 0 \mod{4}$.
    We conclude that $[S] \equiv [S'] \mod{8}$.  The considered
    singularity is not essential, so that $e(S) = e(S')$.

  \item[(c)]
    If splitting separates $S_\Vert$ but not $S_\Horz$, 
    then $d(S_1) = d(S_2) = d(S_3) = d(S_4) = 0$,
    whence $[S_1] \equiv [S_2] \equiv [S_3] \equiv [S_4] \equiv 1 \mod{8}$.
    We conclude that $[S] \equiv [S'] \mod{16}$.
    The considered singularity is not essential, so that $e(S) = e(S')$.

  \item[(d)]
    If splitting separates $S_\Horz$ but not $S_\Vert$, 
    then $d(S_1) = d(S_3) = 1$ and $d(S_2) = d(S_4) = 0$.
    We thus have $[S_1] \equiv [S_3] \equiv 0 \mod{4}$ 
    as well as $[S_2] \equiv [S_4] \equiv 1 \mod{8}$,
    whence $[S_1] - [S_2] + [S_3] - [S_4] \equiv 2 \mod{4}$.
    We conclude that $[S] \equiv [S'] + 4 \mod{8}$.
    The considered singularity is essential, 
    so that $e(S) = e(S') + 1$.
  \end{enumerate}

  This exhausts all possibilities in the case $d(S)=1$: 
  at least one of the components $S_\Horz$ or $S_\Vert$ is a disk,
  and so splitting separates at least one of them.
  
  \textbf{Case (3)} \enspace
  In the case $d(S) = 1$ we already know that $[S] \equiv 0 \mod{2^{d+1}}$.
  If $d(S) \ge 2$ then the four surfaces $S_1,S_2,S_3,S_4$
  satisfy $d(S_i) \ge d(S)-1$, whence $[S_i] \equiv 0 \mod{2^d}$.
  Equation \eqref{eq:Determinant} then implies 
  that $[S] \equiv [S'] \mod{2^{d+1}}$.
\end{proof}

\subsection{Application to ribbon links} \label{sub:RibbonLinks}

For a ribbon knot $K = \partial S$, Proposition \ref{prop:BandCrossingChange} 
says that $\det(K)$ is invariant under band crossing changes of $S$.
This is a well-known property for the classical determinant:
even the Alexander--Conway polynomial $\Delta(K)$ does 
not change (see Remark \ref{rem:AlexanderBandCrossing}).
This observation trivially holds for ribbon links 
with $n \ge 2$ components, for which we always have $\Delta(L)=0$.
The point of Proposition \ref{prop:BandCrossingChange} is that 
after dividing out the factor $V(\bigcirc^n)$ in $V(L)$ 
we obtain the desired property for the Jones determinant: 

\begin{corollary} % [invariance] % [invariance of the Jones determinant] 
  \label{cor:JonesDeterminant}
  Consider an $n$--component ribbon link $L$ bounding 
  a collection of ribbon disks $S \subset \R^3$.
  Then the Jones nullity is $\Null V(L) = n-1$, and the determinant 
  $\det V(L) = [S]$ is invariant under band crossing changes.
\end{corollary}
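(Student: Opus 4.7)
The corollary is essentially an assembly of results already established earlier in the section, so my plan is to spell out how the pieces fit together rather than to do any new computation.

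First I would handle the nullity claim. Since $S$ consists of $n$ disks, it is an immersed ribbon surface with $\chi(S) = n > 0$, so Proposition \ref{prop:EulerJonesNullity} applies directly and gives that $V(L)$ is divisible by $V(\bigcirc^n) = (q^+ + q^-)^{n-1}$, i.e.\ $\Null V(L) \ge n-1$. Combined with the upper bound $\Null V(L) \le n-1$ from Lemma \ref{Lem:UpperNullityBound}, this forces $\Null V(L) = n-1$.

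Second, I would deduce that the Jones determinant is well-defined and equals the surface determinant. The divisibility just established means that the quotient $V(L)/V(\bigcirc^n)$ is a genuine Laurent polynomial in $\Z[q^\pm]$, so the evaluation
\[
\det V(L) = \bigl[ V(L)/V(\bigcirc^n) \bigr]_{q \mapsto i}
\]
is a well-defined integer (or Gaussian integer). Comparing with the defining formula $[L/S] = [V(L)/(q^+ + q^-)^{n-1}]_{q \mapsto i}$ and recalling that $S$ is orientable (being a union of disks), so $[S] = [L/S]$, we see that $\det V(L) = [S]$.

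Third, the invariance under band crossing changes is precisely the content of Proposition \ref{prop:BandCrossingChange} applied to the surface $S$, which has $\chi(S) = n \ge 1$ so that $[S]$ is not automatically zero. There is no obstacle here: the corollary is a clean specialization of the general machinery of Section \ref{sec:BandCrossingChanges} to the case where the ribbon surface happens to be a disjoint collection of disks, i.e.\ to ribbon links in the strict sense. The only point that deserves emphasis, and which I would mention explicitly, is that reducing $V(L)$ modulo the factor $V(\bigcirc^n)$ is essential: the raw determinant $\det(L) = V(L)|_{q \mapsto i}$ vanishes identically for $n \ge 2$, so there is nothing to be extracted from $V(L)$ at $q=i$ without first dividing out this factor.
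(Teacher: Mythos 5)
Your proof is correct and follows essentially the same route as the paper: lower bound from Proposition \ref{prop:EulerJonesNullity}, upper bound from the nullity inequality (the paper cites Corollary \ref{cor:NullityBound}, which is the same statement as Lemma \ref{Lem:UpperNullityBound}), identification $\det V(L) = [S]$ from the definitions, and invariance from Proposition \ref{prop:BandCrossingChange}. No gaps.
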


\begin{proof}
  We know from Corollary \ref{cor:NullityBound} that 
  $\Null V(L) \le n-1$ for all $n$--component links.
  According to Proposition \ref{prop:EulerJonesNullity} 
  we have $\Null V(L) \ge n-1$ for $n$--component ribbon links.
  We conclude that $\Null V(L) = n-1$ and so 
  $\det V(L) = [V(L)/V(\bigcirc^n)]_{(q=i)} = [S]$.
  Proposition \ref{prop:BandCrossingChange} ensures that 
  $\det V(L)$ is invariant under band crossing changes.
\end{proof}

\begin{corollary} % [multiplicativity] % [multiplicativity of the Jones determinant] 
  \label{cor:JonesDetMult}
  Consider an $n$--component ribbon link $L = K_1 \cup\dots\cup K_n$
  that bounds a collection of ribbon disks $S \subset \R^3$ 
  without  mixed ribbon singularities, which means that 
  distinct disks never intersect each other.
  Then the Jones determinant satisfies $\det V(L) = \det(K_1) \cdots \det(K_n)$ 
  and is thus a square integer. 
\end{corollary}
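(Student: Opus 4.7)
My plan is to use Corollary \ref{cor:JonesDeterminant} to pass to a split surface via band crossing changes, then invoke the split-union multiplicativity of the Jones polynomial, and finally apply the Fox--Milnor factorization.

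Since $S$ has no mixed ribbon singularities, in any band diagram for $S$ the only interaction between two distinct disks $S_i$ and $S_j$ occurs at an inter-disk band crossing. Fix an ordering of the disks and apply a band crossing change at each inter-disk crossing that needs it, so that the strand of $S_i$ lies over the strand of $S_j$ whenever $i < j$. Write $S'$ for the resulting ribbon surface; by Corollary \ref{cor:JonesDeterminant} we have $[S] = [S']$. The consistent over/under pattern then permits a vertical isotopy placing each $S_i'$ into its own horizontal slab of $\R^3$, exhibiting $S'$ as a split union $S_1' \sqcup \dots \sqcup S_n'$ sitting in pairwise disjoint $3$-balls, each $S_i'$ being isotopic to the original $S_i$.

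Once $S'$ is split, so is $L' = \partial S' = K_1 \sqcup \dots \sqcup K_n$. The skein relation together with $V(\bigcirc^n) = (q^+ + q^-)^{n-1}$ yields the split formula $V(L' \sqcup L'') = (q^+ + q^-)\, V(L')\, V(L'')$, and induction then gives
\[
V(L') = (q^+ + q^-)^{n-1}\, V(K_1) \cdots V(K_n) .
\]
Dividing by $V(\bigcirc^n) = (q^+ + q^-)^{n-1}$ and evaluating at $q = i$ produces $\det V(L) = \det V(L') = \det(K_1) \cdots \det(K_n)$, using that $\det V(K) = \det(K)$ for any single knot. For each ribbon knot $K_i$, the Fox--Milnor factorization (Remark \ref{rem:AlexanderSlice}) gives $\Delta(K_i)(t) = g_i(t)\, g_i(t^{-1})$ with $g_i \in \Z[t^\pm]$, so $\det(K_i) = g_i(-1)^2$ is a square integer, and therefore $\det V(L) = \bigl(\prod_i g_i(-1)\bigr)^2$.

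The main obstacle is making the geometric reduction precise: translating the combinatorial over/under arrangement at inter-disk band crossings into a genuine ambient isotopy separating the disks into disjoint $3$-balls. The self-ribbon singularities inside each $S_i'$ are local and can be lifted along with $S_i'$, and the absence of mixed ribbon singularities ensures that such a lifting encounters no other disk; but verifying that one can actually carry this out uniformly over all pairs takes some care. Once the geometric step is nailed down, the computation of $V(L')$ and the square-integer conclusion from Fox--Milnor are direct.
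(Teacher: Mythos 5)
Your proposal is correct and follows essentially the same route as the paper: since all inter-disk interactions are band crossings (no mixed ribbon singularities), Corollary \ref{cor:JonesDeterminant} lets you change them to split $S$ into $S_1 \sqcup \dots \sqcup S_n$, after which multiplicativity of $V$ under split union and the Fox--Milnor factorization for the ribbon knots $K_i$ give the result. The extra care you take with the geometric separation step and the explicit split-union computation are fine but not points of divergence from the paper's argument.
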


\begin{proof}
  Since there are no mixed ribbon singularities, we can change 
  band crossings from $S = S_1 \cup\dots\cup S_n$ to 
  $S' = S_1 \sqcup\dots\sqcup S_n$.  Using the invariance 
  established in Corollary \ref{cor:JonesDeterminant},
  we conclude that $\det V(L) = \det V(L') = \det(K_1) \cdots \det(K_n)$.
\end{proof}

\begin{remark} \label{rem:CongruenceMod16}
  If we allow ribbon disks to intersect each other,
  then multiplicativity holds at least modulo $16$:
  for mixed ribbon singularities, the proof of case (1a) 
  of Lemma \ref{lem:Congruence} shows that $[S] \equiv [S'] \mod{16}$
  holds in Equation \eqref{eq:Determinant}.
  Having replaced all mixed ribbon singularities by ribbon crossings,
  we can apply Corollary \ref{cor:JonesDetMult} to conclude that
  $\det V(L) \equiv \det V(L') = \det(K_1) \cdots \det(K_n)$,
  so in particular $\det V(L) \equiv 1 \mod{8}$.
  We have to work a bit harder to improve this congruence from $16$ to $32$, 
  which is where the full details of Lemma \ref{lem:Congruence} come into play.
\end{remark}

\begin{theorem} \label{thm:CongruenceMod32}
  Consider an $n$--component ribbon link $L = K_1 \cup\dots\cup K_n$,
  bounding a collection of ribbon disks $S \subset \R^3$.  
  Suppose that in Equation \eqref{eq:Determinant} the depicted ribbon 
  singularity involves two distinct disks, $S_\Horz \ne S_\Vert$.  
  Then $[S_1]-[S_2] \equiv [S_3]-[S_4] \mod{16}$ and thus $[S] \equiv [S'] \mod{32}$.
\end{theorem}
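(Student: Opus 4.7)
The plan is to deduce the second conclusion from the first by direct substitution into Equation~\eqref{eq:Determinant}. Writing $[S_3]-[S_4] = ([S_1]-[S_2]) - 16k$ for some integer $k$, one gets
\[
[S]-[S'] = -2\bigl([S_1]-[S_2]+[S_3]-[S_4]\bigr) = -4\bigl([S_1]-[S_2]\bigr) + 32k.
\]
Since the depicted singularity is mixed between two disks, the four resolutions $S_1,\dots,S_4$ of Equation~\eqref{eq:Determinant} consist exclusively of disks, so Lemma~\ref{lem:Congruence}(1) gives $[S_1]\equiv[S_2]\equiv 1\pmod{8}$. Hence $[S_1]-[S_2]$ is divisible by $8$, the first summand is divisible by $32$, and the congruence $[S]\equiv[S']\pmod{32}$ follows.

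For the core claim $[S_1]-[S_2]\equiv[S_3]-[S_4]\pmod{16}$, I would proceed by induction on the ribbon number $r(S)$, in parallel with (but refining) the induction of Lemma~\ref{lem:Congruence}. In the base case $r(S)=1$, the surface $S$ is just $S_\Horz\cup S_\Vert$ joined along the single depicted mixed singularity, and each $S_j$ is an honestly embedded union of disks. A direct calculation of $[S_j]$ from the Kauffman brackets of the resulting trivial links, combined with the symmetry exchanging the horizontal and vertical local pictures (which swaps $S_1\leftrightarrow S_3$ and $S_2\leftrightarrow S_4$ up to writhe corrections), establishes the mod-$16$ congruence directly.

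For the inductive step $r(S)\geq 2$, I would pick another ribbon singularity $\sigma\neq$ the depicted one, and apply Equation~\eqref{eq:Determinant} to $\sigma$ simultaneously inside each of $S_1,S_2,S_3,S_4$, obtaining
\[
[S_j] = [S_j']-2\bigl([S_{j,1}]-[S_{j,2}]+[S_{j,3}]-[S_{j,4}]\bigr),
\]
where $S_j'$ has ribbon number $r(S)-1$ and retains the depicted mixed singularity between two disks, while the auxiliary surfaces $S_{j,\ell}$ have strictly positive deficiency. The induction hypothesis, applied to the tuple $(S_1',S_2',S_3',S_4')$, yields $[S_1']-[S_2']\equiv[S_3']-[S_4']\pmod{16}$. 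The auxiliary contributions are then controlled using Lemma~\ref{lem:Congruence}(2) and (3): each $[S_{j,\ell}]$ is divisible by $2^{d(S_{j,\ell})+1}$, and one needs to check that when these are assembled with alternating signs and the prefactor $-2$, the four resolutions at $\sigma$ pair up to give a multiple of $16$ on both the $(S_1,S_2)$ and $(S_3,S_4)$ halves.

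The main obstacle is the combinatorial bookkeeping, since $\sigma$ can be pure or mixed and may involve $S_\Horz$, $S_\Vert$, or a third component, giving many subcases analogous to (1a)--(2d) of Lemma~\ref{lem:Congruence}. I expect the cleanest route is to strengthen the induction hypothesis with a parity invariant in the spirit of the essential-singularity count $e(S)$ from Lemma~\ref{lem:Congruence}(2) and the quantity $\Delta$ from Remark~\ref{rem:DeltaMod16}, so that the balance between the $(S_1,S_2)$ and $(S_3,S_4)$ halves is built into the inductive statement and propagates automatically through each case split.
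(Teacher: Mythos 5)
Your reduction of the mod-$32$ claim to the mod-$16$ congruence is correct and agrees with the paper: since the depicted singularity is mixed between two distinct disks, all four resolutions in Equation~\eqref{eq:Determinant} consist only of disks (case (1a) of Lemma~\ref{lem:Congruence}), so $[S_1]-[S_2]\equiv 0\pmod 8$ and $-4([S_1]-[S_2])+32k\equiv 0\pmod{32}$. The base case $r(S)=1$ is also fine. But the heart of the theorem --- the congruence $[S_1]-[S_2]\equiv[S_3]-[S_4]\pmod{16}$ for $r(S)\ge 2$ --- is only sketched, and the sketch has a genuine gap that your own closing paragraph concedes. Two concrete problems. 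First, your assertion that the auxiliary resolutions $S_{j,\ell}$ at $\sigma$ all have strictly positive deficiency is false: if $\sigma$ is itself a mixed singularity between two distinct disks, all four of its resolutions are again collections of disks with $d=0$, so parts (2) and (3) of Lemma~\ref{lem:Congruence} do not apply; the terms are $\equiv 1\pmod 8$ individually and only cancel in pairs, and when $\sigma$ is pure (case (1b)) each pair leaves a residue $8\Delta_j\pmod{16}$ that a priori depends on $j$ and must be shown to agree between the $(S_1,S_2)$ and $(S_3,S_4)$ halves. Second, an induction on $r(S)$ alone, resolving an \emph{arbitrary} further singularity, runs precisely into the cases (pure singularities, and mixed singularities not touching $S_\Horz$) for which no such control is established; strengthening the hypothesis ``in the spirit of $e(S)$ and $\Delta$'' is exactly the hard part, and you have not supplied it.

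The paper closes the induction differently, and the difference is essential. It uses a double induction --- on $r(S)$ and on the number $k$ of \emph{mixed} singularities of $S_\Horz$ --- and only ever resolves mixed singularities involving $S_\Horz$. When the second singularity involves $S_\Horz$ and a third component, Remark~\ref{rem:CongruenceMod16} controls all four correction terms modulo $16$; when it involves both $S_\Horz$ and $S_\Vert$, it remains mixed in $S_1,S_2$ but becomes pure in $S_3,S_4$, and Remark~\ref{rem:DeltaMod16} shows the two pure corrections are $\equiv 8\Delta\pmod{16}$ with the \emph{same} $\Delta$, so they cancel in the difference $[S_3]-[S_4]$. Crucially, the inner base case $k=1$ is not handled by further resolution at all: after band crossing changes (Corollary~\ref{cor:JonesDeterminant}) one lifts $S_\Horz$ above all other components, whereupon $S_1$ and $S_3$ become connected sums and $S_2$, $S_4$ become mutants up to band twisting (Proposition~\ref{prop:TwistInvariance}), yielding the exact equalities $[S_1]=[S_3]$ and $[S_2]=[S_4]$. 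This mutation/connected-sum step is the missing idea in your plan; without it, or a fully worked substitute for resolving pure and third-party singularities, your induction does not terminate.
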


\begin{proof} 
  We proceed by a double induction.
  The first induction is on the ribbon number $r(S)$. 
  If $r(S) = 1$, then all links are trivial, i.e., 
  $L = L' = L_1 = L_2 = L_3 = L_4 = \bigcirc^n$,
  and so $[S] = [S'] = [S_1] = [S_2] = [S_3] = [S_4] = 1$.

  If $r(S) \ge 2$, we proceed by induction on the number $k$ 
  of mixed singularities of $S_\Horz$. % the horizontal component $S_\Horz$.
  If $k=1$ then $S_\Horz$ is not involved in any other
  mixed ribbon singularity besides the shown one.
  Applying band crossing changes (Corollary \ref{cor:JonesDeterminant})
  we can achieve that $S_\Horz$ lies above all other components,
  except of course at the shown ribbon singularity.
  This situation is depicted in \fullref{fig:Mutation}:
  $S_1$ and $S_3$ are two connected sums,
  while $S_2$ and $S_4$ are mutants modulo some 
  band twisting (see Proposition \ref{prop:TwistInvariance}).
  % Here we use invariance under mutation \cite{where?}
  % and band twists (Proposition \ref{prop:TwistInvariance}).
  This implies that $[S_1]=[S_3]$ and $[S_2]=[S_4]$.
  The difference $[S_1]-[S_2] = [S_3]-[S_4]$ is a multiple 
  of $8$ according to Remark \ref{rem:CongruenceMod16}.
  Equation \eqref{eq:Determinant} then implies that $[S] \equiv [S'] \mod{32}$.
  \newcommand{\pic}[1]{\raisebox{-5.5ex}{\includegraphics[height=12ex]{#1}}}
  \begin{figure}[h]
    \begin{xalignat*}{3}
      S & = \pic{mutation-0s} 
      & S_1 & = \pic{mutation-1} 
      & S_2 & = \pic{mutation-2} = \pic{mutation-2r}
      \\
      S' & = \pic{mutation-0x}
      & S_3 & = \pic{mutation-3} 
      & S_4 & = \pic{mutation-4} = \pic{mutation-4r}
    \end{xalignat*}
    \caption{Resolving the only ribbon singularity between $S_\Horz$ and $S_\Vert$}
    \label{fig:Mutation}
  \end{figure}
    
  Suppose next that $k\ge2$, that is, $S_\Horz$ is involved in 
  another mixed ribbon singularity.  By induction, it suffices 
  to replace one such ribbon singularity by a band crossing.
  This replacement translates $S,S',S_1,S_2,S_3,S_4$ to six new 
  diagrams $\bar{S}, \bar{S}', \bar{S}_1, \bar{S}_2, \bar{S}_3, \bar{S}_4$,
  each having one less ribbon singularity.  By induction 
  we know that $[\bar{S}] \equiv [\bar{S}'] \mod 32$.

  If our second ribbon singularity involves $S_\Horz$ 
  and some third component different from $S_\Vert$, 
  then we can apply Remark \ref{rem:CongruenceMod16} to all four diagrams 
  on the right hand side to obtain the congruence 
  $[S_1]-[\bar{S}_1] \equiv [S_2]-[\bar{S}_2] \equiv
  [S_3]-[\bar{S}_3] \equiv [S_4]-[\bar{S}_4] \equiv 0 \mod 16$.
  Equation \eqref{eq:Determinant} then implies 
  that $[S] \equiv [S'] \mod{32}$.

  \begin{figure}[h]
    \centering
    \includegraphics[height=28ex]{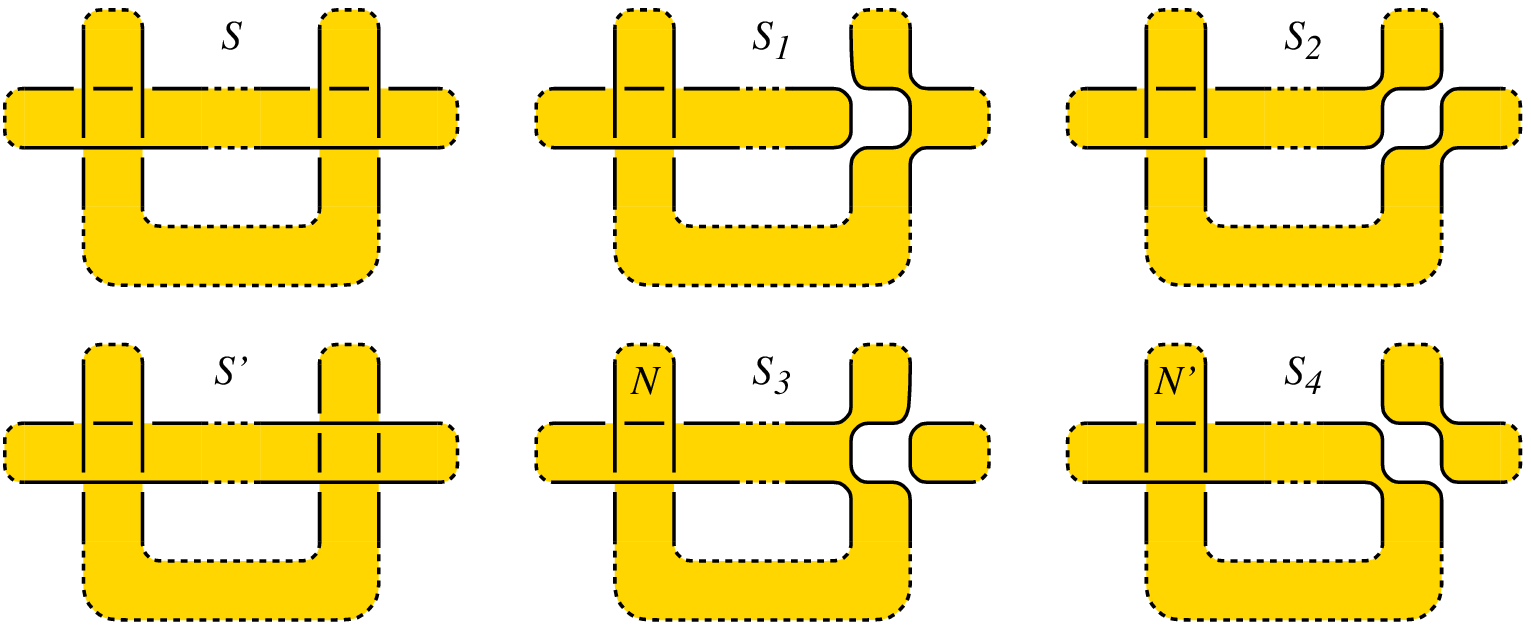}
    \caption{Resolving a second ribbon singularity between $S_\Horz$ and $S_\Vert$}
    \label{fig:RibbonResolveSubtlety2}
  \end{figure}
  
  The only problem arises when our second ribbon singularity involves 
  both $S_\Horz$ and $S_\Vert$.  Suppose for example that the western 
  and southern pieces of $S$ meet again in a second ribbon singularity, 
  as depicted in \fullref{fig:RibbonResolveSubtlety2}.
  Then this is still a mixed singularity in $S_1$ and $S_2$, 
  and so $[S_1]-[\bar{S}_1] \equiv [S_2]-[\bar{S}_2] = 0 \mod 16$
  by Remark \ref{rem:CongruenceMod16}.  
  But in $S_3$ and $S_4$ it becomes a pure singularity.
  Remark \ref{rem:DeltaMod16} in the proof of Lemma \ref{lem:Congruence} case (1b) 
  shows that $[S_3]-[\bar{S}_3] \equiv [S_4]-[\bar{S}_4] \equiv 8\Delta \mod{16}$:
  the northern pieces $N$ and $N'$ are pierced by the newly formed 
  annulus in exactly the same essential singularities.  
  % (The number of inessential singularities may vary but plays no r\^ole.)  
  % This means that $[S_3]-[S_4] \equiv [\bar{S}_3]-[\bar{S}_4] \mod{16}$,
  % Equation \eqref{eq:Determinant} implies that $[S] \equiv [S'] \mod{32}$.
  We conclude that 
  \begin{align*}
    [S] - [S'] 
    & \equiv 2 \bigl( [S_1] - [S_2] + [S_3] - [S_4] \bigr)
    \\
    & \equiv 2 \bigl( [\bar{S}_1] - [\bar{S}_2] + [\bar{S}_3] - [\bar{S}_4] \bigr)
    \equiv [\bar{S}] - [\bar{S}'] \equiv 0 \mod 32
  \end{align*}
  because $[S_1] \equiv [\bar{S}_1]$ and $[S_2] \equiv [\bar{S}_2]$ 
  and $[S_3]-[S_4] \equiv [\bar{S}_3]-[\bar{S}_4]$ modulo $16$.
\end{proof}

\begin{corollary}[general multiplicativity modulo $32$] \label{cor:MultiplicativityMod32}
  Every $n$--component ribbon link $L = K_1 \cup\dots\cup K_n$ satisfies 
  the congruence $\det V(L) \equiv \det(K_1) \cdots \det(K_n) \mod{32}$. 
\end{corollary}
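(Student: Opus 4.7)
The plan is a straightforward induction on the number $\mu(S)$ of \emph{mixed} ribbon singularities in a chosen collection of ribbon disks $S = D_1 \cup \dots \cup D_n$ spanning $L$. Each induction step will remove one mixed singularity at the cost of a congruence modulo $32$, using \fullref{thm:CongruenceMod32} as the sole piece of heavy machinery.

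In the base case $\mu(S) = 0$, distinct disks are disjoint, and \fullref{cor:JonesDetMult} delivers directly the stronger identity $\det V(L) = \det(K_1) \cdots \det(K_n)$. For the inductive step, suppose $\mu(S) \ge 1$ and pick a mixed ribbon singularity involving two distinct disks $D_i$ and $D_j$. Replace it by a band crossing as in Equation \eqref{eq:Determinant}, obtaining a new ribbon disk system $S'$ spanning a link $L'$ with $\mu(S') = \mu(S) - 1$. Since the two disks involved are distinct, the hypothesis of \fullref{thm:CongruenceMod32} is satisfied, and it gives $[S] \equiv [S'] \mod{32}$. Both $S$ and $S'$ are collections of ribbon disks, so \fullref{cor:JonesDeterminant} yields $\det V(L) = [S]$ and $\det V(L') = [S']$, whence $\det V(L) \equiv \det V(L') \mod{32}$.

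It remains to identify the components of $L'$ with the original $K_1, \dots, K_n$ up to ambient isotopy. The replacement is a local $3$-dimensional move, supported in a small ball around the singularity, that lifts the piercing disk out of the pierced disk and thereby alters only the four boundary crossings between strands of $D_i$ and strands of $D_j$. All internal crossings, twists, and pure self-singularities of each $D_l$ are untouched, and components $D_l$ with $l \notin \{i,j\}$ are fixed pointwise. Consequently each individual boundary $\partial D_l' \subset L'$ is ambient isotopic to $K_l$, even though the overall link $L'$ generally differs from $L$ by a change in the linking pattern between $K_i$ and $K_j$. Applying the inductive hypothesis to $S'$ therefore gives $\det V(L') \equiv \det(K_1) \cdots \det(K_n) \mod{32}$, and chaining this with the previous congruence completes the induction.

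The hard part is already encapsulated in \fullref{thm:CongruenceMod32}, whose proof set up the double induction on $r(S)$ and on the number of mixed singularities incident to $S_\Horz$. At this level the only residual subtlety is the geometric observation that desingularizing a mixed ribbon intersection preserves each component's isotopy class, which in turn is immediate once one notes that the local move in Equation \eqref{eq:Determinant} acts only on crossings \emph{between} two distinct components.
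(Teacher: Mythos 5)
Your proposal is correct and follows essentially the same route as the paper: the paper likewise replaces the mixed ribbon singularities one after another by band crossings, invoking Theorem \ref{thm:CongruenceMod32} for the congruence modulo $32$ at each step and Corollary \ref{cor:JonesDetMult} once no mixed singularities remain. Your explicit remark that the desingularization fixes each component's isotopy class (so the $K_l$ persist through the induction) is left implicit in the paper but is a correct and worthwhile clarification.
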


\begin{proof}
  We first replace all mixed ribbon singularities by ribbon crossings:
  Theorem \ref{thm:CongruenceMod32} ensures that
  $\det V(L) \equiv \det V(L') \mod{32}$.
  We can then apply Corollary \ref{cor:JonesDetMult}.
\end{proof}

\begin{example} \label{exm:32IsOptimal}
  The value $32$ is best possible: the $2$--component link $L = 10n36$ 
  depicted below is ribbon, whence $\det(L) = 0$,
  and its Jones polynomial factors as 
  \[ 
  V(L) = (q^+ + q^-)
  (-q^{+8} + 2q^{+6} - 3q^{+4} + 4q^{+2} - 3 + 4q^{-2} - 3 q^{-4} + 2q^{-6} - q^{-8}).
  \]
  Here we find $\det V(L) = -23$ whereas the components 
  satisfy $\det(K_1) = 1$ and $\det(K_2) = 9$.
  The congruence $-23 \equiv 9 \mod{32}$ is satisfied, and $32$ is optimal.
  \[
  \newcommand{\pic}[1]{\raisebox{-7ex}{\includegraphics[height=15ex]{#1}}}
  L = \pic{link10n36}
  \qquad
  L' = \pic{link10n57}
  \]
\end{example}

\begin{example}
  Not all links with maximal nullity $\Null V(L) = n-1$ 
  satisfy multiplicativity modulo $32$.  
  For $L' = 10n57$, for example, we find $\det(L') = 0$ and 
  \[
  V(L') = (q^+ + q^-) (q^{+6} - 2q^{+4} + 2q^{+2} - 2 + 3q^{-2} - 2q^{-4} + 2q^{-6} - q^{-8})
  \]
  whence $\det V(L') = -15$.  Both components separately are trivial,
  and $\det V(L') \equiv 1$ holds modulo $16$ but not modulo $32$.  
  (In particular, $L'$ is not a ribbon link.  This is no surprise:
  determinant and signature vanish but the Alexander polynomial does not.)
\end{example}

\subsection{Satellites of ribbon knots} \label{sub:RibbonSatellites}

Our results contain information for links with two 
or more components, but at first sight they seem void for knots. 
One possible application is via the construction of \emph{satellites}:

Every oriented knot $K \subset \R^3$ can be equipped with a tubular neighbourhood, 
that is, an embedded torus $f \co \D^2 \times \S^1 \into \R^3$, $f(s,t) = f_s(t)$,
such that $f_0$ parametrizes $K$ satisfying $\lk(f_0,f_1) = 0$ 
and $\lk( f_0, f|_{\S^1{\times}\{1\}} ) = +1$.
Such an embedding $f$ exists and is unique up to isotopy.
For a link $P \subset \D^2 \times \S^1$, the image
$f(P) \subset \R^3$ is called the \emph{satellite} 
of $K$ with \emph{pattern} $P$, and will be denoted by $K \ast P$.

\begin{definition}
  We say that $P \subset \D^2 \times \S^1$ is a \emph{ribbon pattern}
  if $\bigcirc \ast P$ is a ribbon link, where $\bigcirc$ denotes 
  the trivial knot.  This means that the standard (unknotted and untwisted) 
  embedding of the torus $\D^2 \times \S^1 \into \R^3$ 
  maps $P$ to a ribbon link in $\R^3$.
\end{definition}

% \begin{figure}[h]
%   \centering
%   \hfill
%   \includegraphics[scale=0.4]{}
%   \hfill
%   \includegraphics[scale=0.4]{}
%   \hfill
%   \includegraphics[scale=0.4]{}
%   \hfill
%   \includegraphics[scale=0.4]{}
%   \hfill{}
%   \caption{Examples of ribbon patterns}
%   \label{fig:RibbonPatterns}
% \end{figure}

\begin{proposition}
  If $K$ is a ribbon knot and $P = P_1 \cup\dots\cup P_n$ 
  is an $n$--component ribbon pattern, then the satellite 
  $K \ast P$ is an $n$--component ribbon link.
  \qed
\end{proposition}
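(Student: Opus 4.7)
The plan is to work 4-dimensionally, exploiting the equivalence recalled in \sref{sub:SliceRibbonLinks} between ribbon links and links bounding properly embedded disks in $\R^4_+$ without local minima. Since $K$ is ribbon, choose such a disk $D_K \subset \R^4_+$ with $\partial D_K = K$, and a tubular 4--ball neighborhood $\nu(D_K) \cong D^2 \times D^2$ whose boundary meets $\R^3$ in the solid torus $\nu(K)$, identified with the image of the satellite embedding $f \co \D^2 \times \S^1 \into \R^3$. Analogously, take the standard flat slice disk $D_\bigcirc$ for $\bigcirc$, with $\nu(D_\bigcirc) \cong D^2 \times D^2$ meeting $\R^3$ in $T_0 = \nu(\bigcirc)$ and realizing the standard embedding $f_0$. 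Since $P$ is a ribbon pattern, $\bigcirc \ast P = f_0(P)$ bounds $n$ disjoint properly embedded disks $D_1', \dots, D_n' \subset \R^4_+$ without local minima.

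The key technical step, and the main obstacle, is to arrange the $D_i'$ to lie entirely inside the 4--ball $\nu(D_\bigcirc)$. A priori the ribbon pattern condition only guarantees that the $D_i'$ live somewhere in $\R^4_+$, and they may well escape $\nu(D_\bigcirc)$; this is where the topology of the satellite really intervenes. The complement $\R^4_+ \minus \nu(D_\bigcirc)$ has the homotopy type of the meridian circle of $D_\bigcirc$, so the algebraic obstruction to pushing the $D_i'$ inside vanishes. Converting this into a geometric isotopy requires innermost disk and arc surgeries on the generic intersection $D_i' \cap \partial \nu(D_\bigcirc)$ inside the 3--sphere $\partial \nu(D_\bigcirc)$, carried out carefully so as not to introduce local minima in the height function on any $D_i'$. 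This is the place where the proof demands real care.

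Once the $D_i'$ lie inside $\nu(D_\bigcirc)$, pick a diffeomorphism of product 4--balls $\Phi \co \nu(D_\bigcirc) \to \nu(D_K)$ whose restriction to $T_0$ realizes the satellite identification $f \circ f_0^{-1} \co T_0 \to \nu(K)$. The images $\Phi(D_1'), \dots, \Phi(D_n')$ are then $n$ disjoint properly embedded disks in $\nu(D_K) \subset \R^4_+$, bounded by $\Phi(\bigcirc \ast P) = K \ast P$; a careful choice of $\Phi$, respecting the 4D height function up to a suitable homotopy, ensures they remain free of local minima (the nontrivial Morse data coming from $D_K$ are absorbed into $\Phi$ itself). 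Invoking \sref{sub:SliceRibbonLinks} in the reverse direction, we conclude that $K \ast P$ is an $n$--component ribbon link.
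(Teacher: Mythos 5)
Your four--dimensional strategy is genuinely different from the paper's own argument, which (recorded only for the cable case) is an explicit three--dimensional construction: thicken an immersed ribbon disk for $K$ and take parallel copies inside the thickening. More importantly, your proof has a real gap exactly where you flag it. The step ``arrange the $D_i'$ to lie entirely inside $\nu(D_\bigcirc)$'' is not achieved by the innermost disk and arc surgeries you invoke: an innermost circle of $D_i' \cap \partial\nu(D_\bigcirc)$ is null-homotopic in $\partial\nu(D_\bigcirc) \cong \S^3$ but need not bound an embedded disk there disjoint from the link and from the other sheets; and even when such a surgery can be performed, replacing a subdisk of $D_i'$ by a pushed-off disk lying in a fixed $3$--manifold has no reason to preserve the absence of local minima --- and that condition is precisely the difference between ribbon and slice, so losing control of it defeats the purpose. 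The vanishing of the ``algebraic obstruction'' ($\R^4_+ \minus \nu(D_\bigcirc) \simeq \S^1$) says nothing about any of this.

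The good news is that this step is unnecessary: you should not try to move the given disks at all. Since $\nu(D_\bigcirc) \cong \D^2 \times \D^2$ meets $\S^3$ in a standardly embedded solid torus carrying the $0$--framing, the pair $\bigl(\nu(D_\bigcirc), P\bigr)$ is diffeomorphic to $\bigl(\D^4, \bigcirc \ast P\bigr)$, so the hypothesis that $\bigcirc \ast P$ is ribbon lets you \emph{choose} disjoint ribbon disks $E_i$ directly inside $\nu(D_\bigcirc)$. What then remains --- and what your last paragraph also glosses over --- is the genuine content of the proposition: the transplanted disks $\Phi(E_i) \subset \nu(D_K)$ must be shown to have no local minima for the ambient height $x_4$, whose pullback under $\Phi$ is essentially $h_K \circ \pi$ (with $h_K$ the height restricted to the ribbon disk $D_K$ and $\pi$ the projection of $\nu(D_K)$ onto $D_K$), not the radial height of $\nu(D_\bigcirc)$ for which the $E_i$ were chosen ribbon. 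One has to run the movie and check that the births and fusion bands of $D_K$ induce only births and saddles, never deaths, on the satellite disks; writing that ``the nontrivial Morse data coming from $D_K$ are absorbed into $\Phi$'' asserts this rather than proves it. The paper's construction sidesteps both difficulties by never leaving $\R^3$.
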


\begin{Notes}
\begin{proof}
  Let $g \co \D^2 \looparrowright \R^3$ be a smooth immersion
  that parametrizes a ribbon surface for our knot $K$ 
  parametrized by $\kappa = g|_{\S^1} \co \S^1 \into \R^3$.
  The map $g$ extends to a thickened ribbon surface, i.e., a smooth map 
  $\hat{g} \co [0,1] \times \D^2 \looparrowright \R^3$
  satisfying $\hat{g}|_{\{0\} \times \D^2} = g$ and behaving 
  in the standard way around ribbon singularities.
  Given $c \in \N$ we define a $c$--component ribbon surface
  $g^{\cable{c}} \co \{1,\dots,c\} \times \D^2 \looparrowright \R^3$
  by setting $g^{\cable{c}}(k,t) := \hat{g}(\frac{k-1}{c},t)$.  

  The restriction $f = \hat{g}|_{[0,1] \times \S^1}$ is a framing 
  of $\kappa$, and its construction ensures $\lk(f) = 0$.  
  The restriction of the ribbon surface immersion 
  $g^{\cable{c}} \co \{1,\dots,c\} \times \D^2 \looparrowright \R^3$ to the boundary 
  $\partial g^{\cable{c}} \co \{1,\dots,c\} \times \S^1 \into \R^3$
  thus yields the $0$-framed $c$-cable $K^{\cable{c}}$. 
  This proves that $K^{\cable{c}}$ is a ribbon link, as claimed,
  by providing a ribbon surface by explicit construction.
\end{proof}
\end{Notes}

\begin{remark}
  Starting with a ribbon pattern $P$, the satellite $K \ast P$ may be ribbon 
  even though $K$ is not; see Rolfsen \cite[Example 8E33]{Rolfsen:1990}.
\end{remark}

\begin{corollary}
  If $K$ is a ribbon knot, then for every $c \in \N$
  the $0$-framed $c$-cable $K^{\cable{c}}$ is a ribbon link,
  whence $\Null V(K^{\cable{c}}) = n-1$ and 
  $\det V(K^{\cable{c}}) \equiv \det(K)^c \mod{32}$.
  \qed
\end{corollary}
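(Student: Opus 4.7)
The plan is to realize the $0$-framed $c$-cable $K^{\cable{c}}$ as a satellite $K \ast P$ for an appropriate ribbon pattern $P$, and then combine the preceding Proposition with Theorems \ref{Thm:RibbonJonesNullity} and \ref{Thm:MultiplicativityMod32}. (I also read the statement as asserting $\Null V(K^{\cable{c}}) = c-1$, since the cable has $c$ components.)

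First I would take $P \subset \D^2 \times \S^1$ to be the pattern consisting of $c$ parallel copies of the core circle $\{0\} \times \S^1$, say $P = \{p_1,\dots,p_c\} \times \S^1$ for distinct points $p_i \in \D^2$. By construction, the satellite $K \ast P$ is exactly the $0$-framed $c$-cable $K^{\cable{c}}$, because the tubular neighbourhood $f \co \D^2 \times \S^1 \into \R^3$ chosen in \sref{sub:RibbonSatellites} has $\lk(f_0,f_1)=0$, which is precisely the $0$-framing convention. To see that $P$ is a ribbon pattern, evaluate it at the unknot: $\bigcirc \ast P$ is just $c$ unlinked unknotted parallel circles in $\R^3$, i.e., the trivial $c$-component link $\bigcirc^c$, which obviously bounds $c$ disjoint disks and is therefore ribbon.

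Next, since $K$ is a ribbon knot and $P$ is a ribbon pattern, the Proposition immediately preceding the Corollary yields that $K \ast P = K^{\cable{c}}$ is a $c$-component ribbon link. Now \fullref{Thm:RibbonJonesNullity} applied to this ribbon link gives $\Null V(K^{\cable{c}}) = c-1$, which is the first assertion.

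For the congruence on the Jones determinant, apply \fullref{Thm:MultiplicativityMod32} to $L = K^{\cable{c}} = K_1 \cup \dots \cup K_c$:
\[
\det V(K^{\cable{c}}) \;\equiv\; \det(K_1) \cdots \det(K_c) \pmod{32}.
\]
Each component $K_i$ is a $0$-framed parallel copy of $K$, hence ambient isotopic to $K$ in $\R^3$, so $\det(K_i) = \det(K)$ for every $i$. The product collapses to $\det(K)^c$, giving $\det V(K^{\cable{c}}) \equiv \det(K)^c \pmod{32}$. There is no substantive obstacle: the only point requiring care is confirming that the specific pattern $P$ realizes the $0$-framed cable (which is exactly how the framing convention in \sref{sub:RibbonSatellites} was set up) and that $P$ itself is a ribbon pattern (which is trivial since $\bigcirc \ast P = \bigcirc^c$).
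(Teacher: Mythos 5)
Your proof is correct and takes exactly the route the paper intends: the corollary is stated without a separate proof, as an immediate consequence of the preceding proposition (applied to the trivial $c$--strand pattern, which is visibly a ribbon pattern since $\bigcirc \ast P = \bigcirc^c$) combined with Theorems \ref{Thm:RibbonJonesNullity} and \ref{Thm:MultiplicativityMod32}, each component of the cable being isotopic to $K$. Your reading of the nullity as $c-1$ (the statement's ``$n-1$'' being a slip, since the cable has $c$ components) is also the correct one.
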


\begin{example}
  The knot $K = 6_1$ is the smallest ribbon knot; it has determinant $\det(K) = 9$.
  The Jones determinant of its two-cable is $\det V(K^2) = 49 = 9^2 - 32$.
  For the three-cable we find $\det V(K^3) = 1785 = 9^3 + 33 \cdot 32$.
  Again $32$ is best possible.  % Notice also that $\det  V(K^2) < \det(K)^2$ 
  % whereas $\det V(K^3) > \det(K)^3$, so there is no obvious order relation.
\end{example}

This corollary is quite pleasant, yet it does not seem to obstruct ribbonness.
% For the trefoil $3_1$, for example, the nullity and the 
% Jones determinant behave as stated above for ribbon links.
A possible explanation is that every cable $K^{\cable{c}}$ is a boundary link: 
Question \ref{quest:BoundaryLinks} below asks whether this entails the same 
algebraic consequences, even if the initial knot $K$ is not ribbon. 
% A more detailed investigation would be desirable.

\begin{Notes}
\begin{remark}
  Analogously we can define the $0$-framed cabling
  $L^{\cable{c}} = L_1^{\cable{c_1}} \cup \dots \cup L_n^{\cable{c_n}}$
  for every link $L = L_1 \cup \dots \cup L_n \subset \R^3$
  and cabling numbers $c = (c_1,\dots,c_n) \in \N^n$.
  If $L$ is a ribbon link with $n$ components and $c \in \N^n$,
  then the $0$-framed $c$-cable $L^{\cable{c}}$ is a ribbon link with 
  $|c| = c_1 + \dots + c_n$ components.  As a consequence,
  its Jones polynomial $V(L^{\cable{c}})$ is divisible by $V(\bigcirc^{|c|})$ and 
  $\det V(L_1^{\cable{c_1}} \cup \dots \cup L_n^{\cable{c_n}}) 
  \equiv \det(L_1)^{c_1} \cdots \det(L_n)^{c_n} \mod{32}$.
\end{remark}

\begin{remark}
  Notice that for \emph{every} knot $K$ we have $\Null K^{\cable{c}} = c-1$:
  given a Seifert surface $S \subset \R^3$ with $\partial S = K$, we can thicken
  $S$ and construct $c$ parallel Seifert surfaces for the components of $K^{\cable{c}}$.  
  Making this surface connected by gluing $c-1$ tubes adds $c-1$ elements to the basis 
  of $H_1(S^{\cable{c}})$ and creates a subspace on which the Seifert form vanishes identically.
  If it should turn out that Seifert nullity and Jones nullity co\"incide,
  then the preceding criterion adds nothing new to the existing tools.
\end{remark}
\end{Notes}

%%%%%%%%%%%%%%%%%%%%%%%%%%%%%%%%%%%%%%%%%%%%%%%%%%%%%%%%%%%%%%%%%%%%%%%%%%%%%

\section{Open questions and perspectives} \label{sec:OpenQuestions}

Our results can be seen as a first step towards 
understanding the Jones polynomial of ribbon links. 
They suggest further questions and generalizations in several directions.

\subsection{From ribbon to slice}

At the time of writing it is not known
whether every smoothly slice link is a ribbon link.
Our results thus offer two perspectives: 
either they extend from ribbon to smoothly slice links, 
which would be rather satisfactory for the sake of completeness.
Or, even more interestingly, there exist smoothly slice links 
for which some (suitably refined) ribbon criteria fail: 
this would refute the long-standing conjecture
``smoothly slice implies ribbon'' conjecture, at least for links. 

\begin{question} \label{quest:SliceRibbon}
  Do Theorems \ref{Thm:RibbonJonesNullity} and \ref{Thm:MultiplicativityMod32} 
  generalize from ribbon links to slice links?
\end{question}

Quite possibly our results hold true in this generalized setting.
% and the proofs have to be adapted only slightly.  
An elegant way to show this would be to extend an observation 
of Casson, recorded by Livingston \cite[\textsection 2.1]{Livingston:2005}:  
for every slice knot $K$ there is a ribbon knot $K'$ 
such that their connected sum $K \cs K'$ is ribbon.
Is there an analogous trick for slice links?

A negative answer to Question \ref{quest:SliceRibbon} 
would be spectacular, but it remains to be examined 
whether the Jones polynomial can detect 
such subtle differences, if at all they exist.
As Livingston \cite[\textsection 10, Problem 1]{Livingston:2005} put it:
``One has little basis to conjecture here.  
Perhaps obstructions will arise (...) but 
the lack of potential examples is discouraging.''

\subsection{From Jones to {\sc Homflypt}}

It is tempting to generalize \fullref{Thm:RibbonJonesNullity} 
to other knot polynomials, in particular to the \textsc{Homflypt} 
polynomial, or at least to $\V$ for $N$ prime:

\begin{question}
  Does \fullref{Thm:RibbonJonesNullity} extend 
  to the generalized Jones polynomial % $\V$
  in the sense that $\V(L) = \V(\bigcirc^n) \cdot \rV(L)$
  for every ribbon link $L$?
\end{question}

This holds for $N=0$ because the Alexander--Conway polynomial vanishes for $n \ge 2$.
% For $N=0$ % we have $V_0(L) = (-1)^{n-1} \Delta(L)$:
% every $n$--component ribbon link $L$ trivially satisfies
% $V_0(L) = V_0(\bigcirc^n) \cdot \tilde{V}_0(L)$ because 
% the Alexander--Conway polynomial vanishes for $n \ge 2$.
The case $N=1$ is trivial.  \fullref{Thm:RibbonJonesNullity} 
settles the case $N=2$.  The question for $N\ge3$ is open, but 
sample calculations suggest that the factorization seems to hold.   

\begin{remark}
  The Kauffman bracket has served us well in the inductive proof for $N=2$.
  For $\V$ with $N \ge 2$, Murakami--Ohtsuki--Yamada \cite{MOY:1998}
  have developed an analogous oriented state model.  Even though the approach 
  is very similar, the calculations generalizing \sref{sec:RibbonJonesNullity} 
  get stuck because certain terms do not cancel each other.
  This makes the argument harder and some additional ideas will be needed.
\end{remark}

\begin{question}
  How can \fullref{Thm:RibbonJonesNullity} be generalized 
  to the Kauffman polynomial \cite{Kauffman:1990b}?
  The obvious generalization is false:
  the Kauffman polynomial $F(L) \in \Z[a^\pm,z^\pm]$
  of the two-component ribbon link $L = 10n36$, 
  for example, is not divisible by $F(\bigcirc^2)$.
  % Which weaker properties hold?
\end{question}

\subsection{Towards Khovanov homology}

The most fertile development in the geometric understanding 
and application of the Jones polynomial in recent years 
has been Khovanov homology \cite{Khovanov:2000,BarNatan:2005}.
Applying the philosophy of categorification to the Kauffman bracket,
this theory associates to each link $L$ a bigraded homology
$\Kh(L) = \smash{\bigoplus_{i,j \in \Z} \Kh_{i,j}(L)}$ as an invariant.  The polynomial 
$P(t,q) = \smash{ \sum_{i,j \in \Z} \, t^i \, q^j \, \dim_\Q( \Kh_{i,j}(L) \otimes \Q ) }$ 
is an invariant of $L$ that specializes for $t = -1$ to
the Jones polynomial, $P(-1,q) = (q^+ + q^-) \cdot V(L)$.

\begin{question}
  What is the homological version of % the polynomial factorization 
  $V(L) = V(\bigcirc^n) \cdot \tilde{V}(L)$?
\end{question}

The na\"ive generalization % from $V(L) = V(\bigcirc^n) \cdot \tilde{V}(L)$ to Khovanov homology 
would be $\Kh(L) \cong \Kh(\bigcirc^n) \otimes \tilde{\Kh}(L)$.
The first problem in stating and proving a result of this type is that 
the isomorphism must be made explicit and should be as natural as possible.
A polynomial factorization such as $P(L) = (q^+ + q^-)^n \cdot \tilde{P}(L)$ 
is a weaker consequence that does not require isomorphisms in its statement.
Sample calculations, say for $L = 10n36$, show that these simple-minded
factorizations do not hold, neither over $\Q$ nor over $\Z/2$.  

Since $P(-1,t)$ can be seen as the graded Euler characteristic
of $\Kh(L)$, another analogy could prove useful: for every fibration
$p \co E \to B$ with fibre $F$, the Leray--Serre spectral sequence
with $E^2_{p,q} = H_p( B, H_q(F) )$ converges to $H_{p+q}(E)$, 
whence $\chi(E) = \chi(B) \cdot \chi(F)$.
% see MacLane \cite[\textsection XI.2, Cor.\,2.1]{MacLane:1995}.
Can the factorization $V(L) = V(\bigcirc^n) \cdot \tilde{V}(L)$ 
be derived as the Euler characteristic of some spectral sequence?
What is the r\^ole of the factor $\tilde{V}(L)$?

\subsection{Ribbon cobordism}

On top of the quantitative improvement of a more detailed numerical invariant
$P(t,q)$, Khovanov homology provides an important qualitative improvement:
it is functorial with respect to link cobordism
(Jacobsson \cite{Jacobsson:2004}, Khovanov \cite{Khovanov:2006}).
In this vein Rasmussen \cite{Rasmussen:SliceGenus} established 
a lower bound for the slice genus of knots, providing a new proof of 
the Milnor conjecture on the unknotting number of torus knots.
It thus seems reasonable to hope that $\Kh(L)$ captures 
more subtle properties of slice and ribbon links.

\begin{question}
  Is there a functorial version of Theorems 
  \ref{Thm:RibbonJonesNullity} and \ref{Thm:MultiplicativityMod32}?
\end{question}

Gordon \cite{Gordon:1981} introduced the notion of \emph{ribbon concordance}.
In the slightly more general setting of \fullref{Prop:EulerJonesNullity} 
we consider a link $L \subset \R^3$ that bounds a properly embedded smooth surface 
$S \subset \R^4_+$ of positive Euler characteristic $n \ge 1$ and without local minima.  
Cutting out small disks around $n$ local maxima we obtain a \emph{ribbon cobordism}
$C \subset \R^3 \times [0,1]$ from $L = C \cap (\R^3\times\{0\})$ 
to $\bigcirc^n = C \cap (\R^3\times\{1\})$ such that $\chi(C) = 0$. % and $C$ has no local minima.
This induces homomorphisms $c \co \Kh(L) \to \Kh(\bigcirc^n)$
and $c^* \co \Kh(\bigcirc^n) \to \Kh(L)$. % of degree $0$.

\begin{question}
  Is $c$ surjective?  Is $c^*$ injective?
  Better still, do we have $c \circ c^* = \id$? 
  A positive answer would exhibit $\Kh(\bigcirc^n)$ 
  as a direct summand of $\Kh(L)$.
  % If so, does this generalize to arbitrary 
  % ribbon cobordisms $C$ with $\chi(C)=0$?
\end{question}

C Blanchet suggested that the chain complex $\CKh(L)$
could be considered as a module over $\CKh(\bigcirc^n) = A^{\otimes n}$,
where $A$ is the Frobenius algebra used in Khovanov's construction.
This leads to the natural question: when is $\CKh(L)$ essentially free over $\CKh(\bigcirc^n)$?
A positive answer would explain the factorization $V(L) = V(\bigcirc^n) \cdot \tilde{V}(L)$ 
and potentially give some meaning to the reduced Jones polynomial $\tilde{V}(L)$.

\subsection{Other geometric criteria} \label{sub:OtherGeometricCriteria}

We have concentrated here on ribbon links,
but many other links $L$ may also satisfy 
the conclusion of Theorems \ref{Thm:RibbonJonesNullity}
and \ref{Thm:MultiplicativityMod32}:

\begin{question} \label{quest:BoundaryLinks}
  Which other geometric properties imply that 
  $V(L)$ is divisible by $V(\bigcirc^n)$?
  Do they imply that $L = K_1 \cup\dots\cup K_n$ satisfies 
  $\det V(L) \equiv \det(K_1) \cdots \det(K_n)$ modulo $32$?
  More concretely: does this hold for boundary links?
\end{question}

We recall that an $n$--component link $L = L_1\cup\dots\cup L_n$ 
is a \emph{boundary link} if it bounds a surface $S = S_1 \cup\dots\cup S_n$
embedded in $\R^3$ such that $\partial S_i = L_i$ for each $i=1,\dots,n$.
(We can always find a connected surface $S$ such that $\partial S = L$,
but here we require that $S$ consist of $n$ disjoint surfaces $S_1,\dots,S_n$.)
The Seifert nullity of a boundary link is maximal, perhaps its Jones nullity too.
It is certainly not enough that pairwise linking numbers 
vanish: the Whitehead link $W$ satisfies $\lk(W) = 0$ but $\det(W) = 8i$.

% S Garoufalidis \cite{Garoufalidis:Signature} has studied the question whether 
% the (coloured) Jones polynomial determines the signature of a knot.
% Here we advertise and partly solve an analogous question for the nullity: 

\begin{question} \label{quest:ClassicalNullity}
  For which links $L$ do we have equality $\Null_\omega(L) = \Null_\omega \V(L)$?
\end{question}

The following observations show that this question is not completely absurd:
\begin{itemize}
\item
  Equality holds for all knots $K$ and prime $N$,
  because $\Null_\omega(K) = \Null_\omega \V(K) = 0$, that is,
  $\det_\omega(K) = \V(L)|_{(q \mapsto \omega)}$ is always non-zero.
\item
  Equality also holds for all two-component links and prime $N$, 
  because we have $\Null_\omega(L) \in \{0,1\}$ and 
  $\Null_\omega \V(L) \in \{0,1\}$, as well as
  $\det_\omega(L) = \V(L)|_{(q \mapsto \omega)}$.
\item
  Equality is preserved under disjoint union, connected sum,
  mirror images, and reversal of orientations. 
  % \item
  %   Equality also holds for split links $L = K_1 \sqcup \dots \sqcup K_n$ 
  %   and prime $N$: in this case both nullities are maximal, 
  %   $\Null_\omega(L) = \Null_\omega \V(L) = n-1$.
\item
  \fullref{Thm:RibbonJonesNullity} ensures that,
  at least for $N=2$, equality holds for all ribbon links.
\end{itemize}  

Garoufalidis \cite[Corollary 1.5]{Garoufalidis:1999}
observed that $\Null(L) \ge 4$ implies $\Null V(L) \ge 2$.
This follows from Equation \eqref{eq:Garoufalidis} and
a result of Lescop \cite[\textsection 5.3]{Lescop:1996} saying that
$\Lambda(M)$ vanishes for every manifold with % first Betti number 
$\dim H_1(M,\Q) \ge 4$.  In the special case $M = \Sigma^2_L$ 
this can possibly be sharpened to show that 
$\Null(L) \ge 2$ implies $\Null V(L) \ge 2$.

\begin{question} \label{quest:Concordance}
  Does link concordance $L \sim L'$ imply that $\Null_\omega \V(L) = \Null_\omega \V(L')$?
  If so, which congruence holds between $\det_\omega \V(L)$ and $\det_\omega \V(L')$?
\end{question}

For the Alexander--Conway polynomial the corresponding questions 
were answered by Kawauchi \cite{Kawauchi:1978} and Cochran \cite{Cochran:1985}.
Equality in Question \ref{quest:ClassicalNullity} would imply
concordance invariance of $\Null_\omega \V(L)$, % in Question \ref{quest:Concordance}, 
because the Seifert nullity is a concordance invariant.

\subsection{Does the Jones polynomial determine the signature mod $4$?}

The determinant $\det(L)$ and the signature $\Sign(L)$ 
of a link $L$ are related by the formula 
\begin{equation} \label{eq:DetSign}
  \det(L) = i^{-\Sign(L)} \cdot |\det(L)| . 
\end{equation}
Conway \cite{Conway:1969} used this together with 
$\newcommand{\skein}[1]{\Sign(\raisebox{-0.4ex}{\includegraphics[height=2ex]{#1}})}
\skein{skein+} - \skein{skein-} \in \{0,1,2\}$ 
to calculate signatures recursively.  An analogous formula holds 
for every $\omega \in \S^1$ with $\mathrm{im}(\omega) > 0$.

If $\omega$ is a primitive $2N$th root of unity, 
we know that $\det_\omega(L) \ne 0$ at least for knots.
For links with $n \ge 2$ components Conway's signature calculation 
is obstructed by the fact that the determinant may vanish, 
in which case Equation \eqref{eq:DetSign} contains no information.
This happens exactly when $\Null(L) \ge 1$.
One might suspect that the stronger condition
$\det V(L) = i^{-\Sign(L)} \cdot |\det V(L)|$ holds.
Unfortunately this is \emph{false} in general:
see Example \ref{exm:32IsOptimal} above
for a ribbon link with $\det V(L) < 0$.

The formula thus needs some correction. 
Of course we can \emph{define} a link invariant 
$\varepsilon(L) \co \Links \to \{\pm 1, \pm i\}$ by
$\varepsilon(L) := i^{\Sign(L)} \cdot \det V(L) / |\det V(L)|$.
The topological meaning of this factor $\varepsilon(L)$, however, is not clear.  
It is also unknown whether $\varepsilon(L)$ 
can be deduced from the Jones polynomial alone.  
If so, then the Jones polynomial would determine the signature 
of all \emph{links} via Conway's skein theoretic recursion.

\subsection{Surface invariants of finite type}

\fullref{sec:BandCrossingChanges} introduces and illustrates the concept 
of surface invariants that are of finite type with respect to band crossing changes.
This is an interesting analogy and extension of link invariants of finite type.
What is the precise relationship between these two classes of invariants?
In our examples the surface invariant $S \mapsto d_k(\partial S)$ only depends 
on the boundary of $S$, but in general this need not be the case.
Can we generate more non-trivial examples from the \textsc{Homflypt} 
or the Kauffman polynomial or other quantum invariants?
What is their geometric significance?

% For example, the recent approach of Kauffman--King--Lins \cite{Kauffman:2007} 
% can possibly be reinterpreted and extended in the setting of surface invariants.

The general finite type approach to surfaces will be 
the object of a forthcoming article \cite{Eisermann:SurfInv}.  
Generalizing \fullref{sec:BandCrossingChanges}, one proceeds as follows:
\begin{itemize}
% \item Present (embedded/immersed) surfaces as a category by generators and relations.
\item Introduce the filtration induced by band crossing changes and band twists. 
\item Study the graded quotients and extract combinatorial data modulo relations.
\item Integrate (in low degree at least) combinatorial data to invariants of surfaces.
\end{itemize}

It is interesting to note that the Euler characteristic 
of the surface intervenes in a natural and non-trivial way.
Two perspectives seem to be most promising: 
Considering \emph{immersed} ribbon surfaces one might 
hope to derive lower bounds for the ribbon genus.
Turning to \emph{embedded} surfaces one might try 
to reconcile the classical approach of Seifert surfaces
with finite type invariants.  Here Vassiliev--Goussarov invariants 
are known to be too restrictive, see Murakami--Ohtsuki \cite{MurakamiOhtsuki:2001}.

In analogy with the tangle category modelling knots and links,
one can construct a category modelling ribbon surfaces.
Once we have a presentation of this category by generators and relations,
we can look for representations and extract invariants.
Quite plausibly some of the extensively studied quantum representations 
extend to this setting, and the introduction of surfaces
might reveal more topological features.

%%%%%%%%%%%%%%%%%%%%%%%%%%%%%%%%%%%%%%%%%%%%%%%%%%%%%%%%%%%%%%%%%%%%%%%%%%%%%

\bibliographystyle{gtart}

\bibliography{ribbonlinks}

%%%%%%%%%%%%%%%%%%%%%%%%%%%%%%%%%%%%%%%%%%%%%%%%%%%%%%%%%%%%%%%%%%%%%%%%%%%%%
\end{document}